\pgfplotsset{compat=newest}
\newtheorem{theorem}[subsection]{Theorem}
\newtheorem{assumption}{Assumption}
\newtheorem{remark}{Remark}
\newtheorem{definition}{Definition}
\newtheorem{lemma}{Lemma}
\NewDocumentCommand{\ts}{O{c} e{^?_}}{
  \begin{smallermatrix}[#1]
  \mathstrut\IfValueT{#2}{#2} \\
  \mathstrut\IfValueT{#3}{#3} \\
  \mathstrut\IfValueT{#4}{#4}
  \end{smallermatrix}%
}
\journal{Arxiv}
\begin{document}
\begin{frontmatter}



\title{A cell centered Galerkin method for miscible displacement in heterogeneous porous media}

 
\author[inst1]{Maurice S. Fabien}

\affiliation[inst1]{organization={Center for Computational Science and Engineering, Schwarzman College of Computing, \\Massachusetts Institute of Technology},
            addressline={77 Massachusetts Avenue}, 
            city={Cambridge},
            state={MA},
            postcode={02139}, 
            country={USA}} 

\begin{abstract}
 In this paper we present a cell centered Galerkin (CCG) method applied to miscible displacement problems in heterogeneous porous media.  The CCG approach combines concepts from finite volume and discontinuous Galerkin (DG) methods to arrive at an efficient lowest-order approximation (one unknown per cell).  We demonstrate that the CCG method can be defined using classical DG weak formulations, only requires one unknown per cell, and is able to deliver comparable accuracy and improved efficiency over traditional higher-order interior penalty DG methods. In addition, we prove that the CCG method for a model Poisson problem gives rise to a inverse-positive matrix in 1D.  A plethora of computational experiments in 2D and 3D showcase the effectiveness of the CCG method for highly heterogeneous flow and transport problems in porous media. Comparisons between CCG and classical DG methods are included.
\end{abstract}



\begin{keyword}
Finite elements\sep Discontinuous Galerkin\sep local conservation \sep flow and transport \sep porous media \sep maximum principle
\end{keyword}

\end{frontmatter}


\section{Introduction}
	This work introduces an efficient numerical method for solving the incompressible miscible displacement problem in porous media.	The miscible displacement problem is a central model for subsurface transport with direct relevance to enhanced oil recovery, polymer and solvent miscible floods, geothermal heat transport, and groundwater remediation and tracer tests. Its physics are representative yet challenging: there is no sharp interface; composition-dependent viscosity and density couple flow and transport; and strong advection competes with anisotropic hydrodynamic dispersion in heterogeneous, anisotropic permeability fields, producing viscous and gravity fingering and mixing-controlled recovery. Mathematically, the model couples an incompressible Darcy pressure equation to an advection–diffusion–dispersion equation for concentration written in conservative (mass-balance) form.  As the underlying mathematical model enjoys a mass balance, many successful discretizations attempt to mimic this property at the discrete level. These include various finite difference, finite volume, and finite element methods \cite{helmig1997multiphase}.  There is a substantial literature on designing and analyzing numerical methods for the miscible displacement problem. Desirable properties include negligible artificial diffusion; robustness and accuracy on structured and unstructured meshes; faithful treatment of heterogeneity and anisotropy; and computational costs compatible with large-scale simulations.
  
	In this paper, we apply the so-called cell centered Galerkin method to resolve both flow and transport in the miscible displacement problem.	The cell centered Galerkin method was first introduced in \cite{ref_ccg1} as a way to devise a lowest-order (one unknown per cell) method, which is able to use preexisting DG frameworks.  The CCG method relies on gradient reconstruction operators as well as certain incomplete polynomial spaces, which replace classical complete polynomial spaces in discrete formulations \cite{ref_ccg3,ref_ccg4}.  This framework allows for convergence analysis in a typical finite element framework (including convergence to minimal regularity solutions for elliptic problems). Other attractive features of DG methods are also retained for CCG methods \cite{ref_ccg2} (such as $hp$-adaptivity, ability to use general meshes, can handle anisotropic tensor coefficients, local and global mass conservation, etc.).  Additionally, for linear second order elliptic problems, the CCG method boasts comparable accuracy to piecewise linear interior penalty discontinuous Galerkin (IPDG) schemes (second order accuracy in $L^2$-norm) for solutions with suitable regularity \cite{ref_ccg1}.
 
	We prove in 1D that the CCG method for second-order elliptic problems results in a inverse-positive disretization matrix. That is, the matrix arising from the discretization of the CCG matrix for diffusion problems is monotone, but not necessarily an $M$-matrix \cite{refmono1}. High-order DG methods (even piecewise linear approximations) cannot be expected to posses this property (including multi-numerics schemes), even if the underlying PDE obeys a maximum principle \cite{ref_dg_mono1}.  Furthermore, the CCG method has a computational cost comparable to CCFV methods, since there is only one unknown per cell.  All of these features of the CCG method make it an interesting alternative for convection-diffusion type problems where implicit solves are mandatory, and controlling the number of degrees of freedom is critical for time-to-solution performance.  
  
Although traditional two-point flux cell-centered finite volume methods (CCFV) are relatively straightforward to analyze and implement, they have several drawbacks.  For instance, there are significant restrictions on the mesh (e.g., $K$-orthogonality conditions) \cite{ref_FV_grid1}, strong anisotropy and tensor coefficients can cause numerical pollution \cite{ref_FV_ani1}, convection-dominated flows using first-order upwind schemes often results in artificial smearing that misrepresents sharp fronts or interfaces \cite{ref_FV_smear1}.

Conversely, numerous studies indicate that high-order methods can overcome many of the aforementioned drawbacks of low-order methods \cite{ref_hi_lo1}.  High-order methods are generally know to be robust, are capable of handling unstructured grids, map well to emerging massively parallel computational hardware, and boast superior accuracy for smooth solutions \cite{ref_hi_lo2,ref_hi_lo3}.  Despite their many advantages, a common concern with high-order methods is that they have significantly more unknowns than the industry-standard two-point flux finite volume scheme.  This increase in unknowns can lead to prohibitive computational costs, as simulation run time is largely dominated by solving the linear systems arising from the discretization. 

The increased computational complexity has prompted researchers to seek more cost-effective and robust high-order methods.  Examples of techniques developed to reduce the computational burden include but are not limited to: hybridization, mortaring, multi-point flux finite element methods, mimetic finite differences, and multi-numerics schemes.

In \cite{li2016numerical}, the authors consider high-order mixed finite element for flow and DG for transport.  It is demonstrated that the high-order schemes are capable of resolving sharp wave fronts, handle unstructured grids, as well as can be computationally more efficient than CCFV. Other related works include DG methods for both flow and transport \cite{li2015high}.  Similar findings are reported. However, the issue of computational cost is not addressed.

	Hybridized discontinuous Galerkin methods (HDG) are a variant of the DG method in which most element-interior degrees of freedom (DOFs) are locally eliminated (static condensation), leaving only trace unknowns on the element boundaries as the globally coupled DOFs.  Through special choices of numerical fluxes and traces, the static condensation process lends itself well to parallelism.  For large meshes and large polynomial degrees of approximation, HDG can provide substantial savings over classical DG in terms of DOFs \cite{ref_dof1,ref_dof2}.  A series of papers has examined the use of hybridized DG for flow and transport \cite{fabien2020high,fabien2025high,fabien2018hybridizable}, showing that hybridization can significantly reduce the computational footprint of HDG schemes. Other features of the HDG method such as superconvergent post-processing of state variables and continuity of normal components of numerical fluxes provide additional accuracy for approximation.

	Multi-numerics schemes offer a practical way to curb the cost of high-order methods. Examples include (i) mortar-space upscaling, which partitions the domain into subdomains with possibly different physics/discretizations; well-posedness hinges on judicious interface matching conditions \cite{peszynska2002mortar}; (ii) method-of-characteristics couplings with finite elements/finite differences for flow and transport, which fold convection into a material derivative so the solution advances along (typically backward) characteristics—reducing spurious oscillations and stringent CFL limits—while diffusion/reaction are handled by standard spatial discretizations; implementations can be delicate \cite{douglas1982numerical}; and (iii) hybrid FV–DG approaches that deploy high order where needed and low order elsewhere, with challenges in selecting interfaces, treating coupling terms, reduced accuracy near interfaces, and accommodating constraints inherited from the FV side (e.g., DG–CCFV on $K$-orthogonal meshes) \cite{riviere2014convergence,doyle2020multinumerics}.

	We also mention that methods related to continuous finite elements have also been utilized in the context of flow and transport applications in porous media \cite{ref_cg1,ref_cg2,ref_cg3,ref_cg4}.  Here the primary advantage is that most finite element practitioners are familiar with continuous Galerkin schemes, and these methods have fewer unknowns than their DG counterparts.  However, special attention is required for continuous finite element approximations for flow and transport applications, because local mass conservation is typically violated \cite{ref_cg_conservation1,ref_cg_conservation2,ref_cg_conservation3,ref_cg_conservation4}.  Moreover, convection-dominated transport resolved with continuous approximations require intricate stabilization and post-processing; since most slope limiting procedures do not enforce continuity across element interfaces.

	In the literature there of course exist many numerical methods that rely on one unknown per cell (or one unknown per vertex), however, they often are not related to variational problems or require completely different methodologies to analyze \cite{amaziane2009posteriori,cances2014posteriori}.  On the other hand, the CCG method is a finite element method, and is a relatively straightforward modification of well-studied DG methods.  Keeping all of the numerical methods under a single framework can assist with implementation and analysis. Many scientific computing software packages specialize on one family of numerical method (e.g., finite elements but not spectral methods), which makes coding multi-numerics somewhat challenging.
 

	The paper is organized as follows. Section~\ref{sec:prelim} collects the necessary preliminaries and notation and introduces the CCG Galerkin function spaces. Section~\ref{sec:model} presents the governing PDE system, and Sections~\ref{sec:ccg1} and \ref{sec:ccg2} describe its CCG discretization. Section~\ref{sec:splitting} details the fully discrete scheme. Section~\ref{sec:numerics} reports numerical experiments, and Section~\ref{sec:conclusions} summarizes the findings. Appendix~\ref{sec:app} contains a new theoretical result: we prove the inverse of the 1D CCG matrix for a model second-order elliptic problem is nonnegative.

\section{Preliminaries}\label{sec:prelim}
In this section we introduce some notations and terminology required to describe the CCG method.  Let $\Omega \subset \mathbb{R}^{\tt{d}}$ be a porous medium which is an open bounded connected polygonal (polyhedral domain) for ${\tt{d}}\in \{1,2,3\}$. 

We consider a meshing of $\Omega$, denoted by $\mathcal{T}_h$.  The collection $\mathcal{T}_h$ is finite, and consists of nonempty and disjoint simplices which form a partitioning of $\Omega$. Each $E\in \mathcal{T}_h$ is referred to as an element of the mesh, and we define $h=\max_{E\in \mathcal{T}_h} h_E$, where $h_E$ is the diameter of the element $E$. Denote the cell center of $E\in \mathcal{T}_h$ as $\bar{x}_E$.  This mesh is assumed to be shape regular and consist of simplices, but more general star shaped elements are viable for the CCG method \cite{ref_shape_regular}.

The mesh skeleton is given by $\Gamma_h$.  Each $F\in \Gamma_h$ has positive $({\tt{d}}-1)$-dimensional measure, and either satisfies $F\subset E_1\cap E_2$ for $E_1,E_2\in \mathcal{T}_h$ (interior face) or $F\subset E \cap \partial \Omega$ for $E \in \mathcal{T}_h$ (boundary face).  The collection of all interior faces is represented by $\Gamma_h^\circ$, and all boundary faces are collected in $\Gamma_h^\partial$, so that $\Gamma_h= \Gamma_h^\circ\cup \Gamma_h^\partial$.

For every interior face $F\in \Gamma_h^\circ$, we arbitrarily select (and fix) a unit normal vector ${\bm n}_F ={\bm n}_{E_1,F} =-{\bm n}_{E_2,F}$, where $F\subset E_1\cap E_2$, where ${\bm n}_{E_i,F}$ are the outward unit normal for element $E_i$, $i=1,2$.  If $F\in \Gamma_h^\partial$, we set ${\bm n}_{F}$ to be the unit normal pointing outward of $\Omega$.

\begin{assumption} (Admissible mesh) \label{sec:prelim_assumption1}
We say that the mesh $\mathcal{T}_h$ is admissible if for each $F\in \Gamma_h^\circ$, there exists a set $\mathcal{B}_h^F\subset\mathcal{T}_h$ with cardinality $|\mathcal{B}_h^F|={\tt d}+1$ so that the cell centers $\{\bar{x}_T\}_{T\in \mathcal{B}_h^F}$ form a non-degenerate simplex $S_F$ of $\mathbb{R}^{\tt d}$.  
\end{assumption}
 
 \begin{remark}
 Admissible meshes result from certain regularity requirements such as shape- and contact-regularity \cite{ref_dg_book1}.  We note that admissible meshes include general polyhedral subdivisions, and allowing nonconforming interfaces (hanging nodes) \cite{ref_ccg1}.  Moreover, for a given facet $F$, the set $\mathcal{B}_h^F$, is not necessarily unique.
 \end{remark}
The standard DG finite element space is given by
\[
V_h^k := \{v \in L^2(\Omega): \forall E \in \mathcal{T}_h,~v_E\in \mathbb{P}_{\tt d}^k(E)  \},
\]
where $\mathbb{P}_{\tt d}^k(E)$ is the space of ${\tt d}$-variate polynomials of total degree at most $k\ge0$ on $E$.  Functions in $V_h^k$ are discontinuous along the interior faces of the mesh, so we introduce jumps and averages to assist in defining traces.  Given $v\in V_h^k$, and $F \in \Gamma_h^\circ$ (with $F \subset E_1\cap E_2$, $E_1,E_2\in \mathcal{T}_h$, and ${\bm n}_F$ oriented from $E_1$ to $E_2$), we set the jump and average of $v$, respectively, as
\[
[\![ v ]\!] \stackrel{def}{=} v_{E_1} - v_{E_2},
\quad
\{\!\{ v \}\!\} \stackrel{def}{=} \frac{v_{E_1} + v_{E_2}}{2},
\]
where it is understood that $v_{E_i}$ is the restriction of $v$ to $E_i$ for $i=1,2$.  As a matter of convention, for boundary faces $F \in \Gamma_h^\partial$ (with $F \subset E \cap \partial\Omega$, $E  \in \mathcal{T}_h$), the definition is extended as follows:
\[
[\![ v ]\!] =\{\!\{ v \}\!\}= v_{E }.
\]
For the cell average of $v_h\in V_h^k$ on cell $E\in \mathcal{T}_h$, we use the notation $\overline{v_h}_E \stackrel{def}{=} \frac{1}{|E|}\int_E v_h$.

To define the CCG spaces, a trace interpolator and gradient reconstruction operator are required.  
\begin{definition} \label{sec:prelim_def1}
For any $v_h\in V_h^0$, we define the barycentric trace interpolator as
\begin{equation}
I_F(v_h) \stackrel{def}{=} 
\sum_{E\in \mathcal{B}_h^F} \lambda_{E,F} \overline{v_h}_E, \quad \forall F\in \Gamma_h,
\label{eq_bary}
\end{equation}
where $\lambda_{E,F}$ is the barycentric coordinate for the simplex $S_F$ described in Assumption \ref{sec:prelim_assumption1}.  See Fig.~\ref{fig:barycentric_doodle} for a depiction of $I_F$.
\end{definition} 
\begin{figure}[htb!]
\centering
\begin{tikzpicture}[scale = 2.4, line join=round, every node/.style={font=\small}]

  \coordinate (A) at (0,0);
  \coordinate (B) at (2,0);
  \coordinate (C) at (0,1.5);

  \coordinate (D) at (1,-1);   

  \coordinate (E) at (-1,0.5); 

  \newcommand{\centroid}[4]{%
    \path let \p1=(#1), \p2=(#2), \p3=(#3),
              \n1={(\x1+\x2+\x3)/3},
              \n2={(\y1+\y2+\y3)/3}
         in coordinate (#4) at (\n1,\n2);}

  \centroid{A}{B}{C}{G1} 
  \centroid{A}{B}{D}{G2} 
  \centroid{A}{C}{E}{G3} 

  \draw[thick] (A)--(B)--(C)--cycle;   
  \draw[thick] (A)--(B)--(D)--cycle;   
  \draw[thick] (A)--(C)--(E)--cycle;   


  \fill (G1) circle (0.045) node[right=2.0pt] {$\bar x_{E_1}$};
  \fill (G2) circle (0.045) node[below=2.0pt] {$\bar x_{E_2}$};
  \fill (G3) circle (0.045) node[below=2.0pt] {$\bar x_{E_3}$};

  \fill[red] (0.7,0) circle (0.045) node[below left =2.0pt,red] {$I_F(v_h)$};

  \node at ($(G1)+(-0.18,0.38)$) {$E_1$};
  \node at ($(G2)+(0.38,-0.10)$)  {$E_2$};
  \node at ($(G3)+(0.15, 0.38)$) {$E_3$};
  \node[blue] at ($(G3)+(0.80, -0.30)$) {$S_F$};

  \node at ($(G2)+(0.10, 0.42)$) {$F$};  
  
  \draw[thick,dotted,red] (G1) -- (0.7,0);
  \draw[thick,dotted,red] (G2) -- (0.7,0);
  \draw[thick,dotted,red] (G3) -- (0.7,0);

	\draw[thick,dashed,blue] (G1) -- (G2);
	\draw[thick,dashed,blue] (G2) -- (G3);
	\draw[thick,dashed,blue] (G1) -- (G3);
\end{tikzpicture}
\caption[]{Visualization of the barycentric trace interpolator from Definition \protect\ref{sec:prelim_def1} on a triangular mesh in 2D.  The simplex $S_F$ (dashed blue lines) is formed using the cell centers of the triangles from $\mathcal{B}_F=\{E_1,E_2,E_3\}$.  The interpolator $I_F$ along the interface is denoted in red.}
\label{fig:barycentric_doodle}
\end{figure}
\begin{definition}\label{sec:prelim_def2}
For all $v_h\in V_h^0$ and $E\in \mathcal{T}_h$, the gradient reconstruction operator ${\bm G}_h:V_h^0\to (V_h^0)^{\tt d}$ is given by
\[
{\bm G}_h(v_h) \bigg|_E 
\stackrel{def}{=}  
\sum_{F\in \partial E} \frac{|F|}{|E|}
(I_F(v_h) - \overline{v_h}_E) {\bm n}_{E,F}
,
\]
where $|F|$ and $|E|$ are the measures of the face $F$ and element $E$, respectively.
\end{definition}
The CCG space is constructed using Definition \ref{sec:prelim_def2}.  The linear reconstruction operator $\mathcal{A}_h:V_h^0 \to V_h^1$ maps piecewise constant functions onto piecewise linear function so that for all $v_h\in V_h^0$ and $E\in \mathcal{T}_h$,
\begin{equation} 
\mathcal{A}_h(v_h)\bigg|_E 
\stackrel{def}{=}
\overline{v_h}_E + {\bm G}_h(v_h) \bigg|_E  \cdot (\vec{x} - \overline{x}_E).
\label{eq_ccg_reconstruction}
\end{equation}
Then, the CCG space is defined as
\begin{equation} 
V_h^{\textrm{ccg}} \stackrel{def}{=} \mathcal{A}_h(V_h^0)\subset V_h^1.
\label{def_ccg_space}
\end{equation}
Hence, $V_h^{\textrm{ccg}} $ is an incomplete polynomial space with its dimension equal to the cardinality of $\mathcal{T}_h$. 

\section{Model problem}\label{sec:model} 
Our model for incompressible miscible displacement is defined by the following system of partial differential equations:
\begin{align}
\vec{u} &= - \frac{{\bm K}}{\mu(c)} \nabla p,    &&\textrm{ in } \Omega \times (0,T),
\label{eq_model1}
\\
\nabla\cdot \vec{u}  &= q^I - q^P, &&\textrm{ in } \Omega \times (0,T),
\label{eq_model2}
\\
  \frac{\partial }{\partial t}(\phi c) -\nabla\cdot ( {\bm D} (\vec{u}) \nabla c) + \nabla \cdot (\vec{u} c) &= \widetilde{c} q^I - q^P c,
  &&\textrm{ in } \Omega \times (0,T).
  \label{eq_model3}
\end{align}
for unknowns $c$ (fluid concentration), $\vec{u} $ (Darcy flux), and $p$ (fluid pressure). Here,  $\phi$ is the porosity of the medium $\Omega$, ${\bm K} $ is the permeability, $q^I$ and $q^P$ are source terms (injection well and production well, respectively),
${\bm D} (\vec{u})$ is the diffusion-dispersion tensor, $\mu(c)$ is the fluid viscosity, and $\widetilde{c}$ is the fluid concentration at the injection well.  For $\vec{u}\in \mathbb{R}^{\tt d}$, diffusion-dispersion tensor is assumed to take the following form:
\begin{equation} 
{\bm D} (\vec{u}) =  
(a_t| \vec{u} | + d_m){\bm I}
+ 
\bigg(\frac{ a_l-a_t }{| \vec{u} |}\bigg)(\vec{u}\vec{u}^{\,T}),
\label{eq:diffdisp}
\end{equation}
where $|\vec{u}|= \sqrt{\vec{u}^{\,T}\vec{u}}$, and $a_l$ (longitudinal  dispersivity), $a_t$ (transverse dispersivity), $d_m$ (molecular diffusivity) are parameters.  To complete the system, we set no flow boundary conditions on $\partial\Omega$:
\begin{align*}
\vec{u}\cdot \vec{n} &=0,  ~ \textrm{on } \partial\Omega  ,
\\
({\bm D}(\vec{u}) \nabla c) \cdot \vec{n} &=0,  ~ \textrm{on }\partial\Omega .
\end{align*}
 To ensure the pressure has a unique solution, the following compatibility condition is enforced:
\[
0 = \int_\Omega (q^I - q^P).
\] 
Such a configuration results in the simulations being driven by the source terms $q^I$ and $q^P$.  The PDE is supplied with an initial condition of the form
\[
c(\vec{x},t=0) = c_0(\vec{x}),\quad
 \vec{x} \in \Omega.
\]
 
\section{CCG method for flow}\label{sec:ccg1}
With the definition of the CCG space in \eqref{def_ccg_space}, we can state the CCG method for the Darcy flow system \eqref{eq_model1}-\eqref{eq_model2}. An advantage of the CCG method is that it can utilize the same interior penalty DG framework, but replace the full DG space $V_h^k$ with $V_h^{\textrm{ccg}}$.  In particular, the CCG scheme seeks $p_h\in V_h^{\textrm{ccg}}$ so that
\begin{align}
B_p(p_h,v_h) &= \ell_p (v_h),\quad \forall v_h \in V_h^{\textrm{ccg}}, 
\label{eq_DG_flow}
\end{align}
where
\begin{dmath}
B_p(p_h,v_h) = \sum_{E\in \mathcal{T}_h} \int_E {\bm K} \nabla p_h \cdot \nabla v_h
-
\sum_{e\in \Gamma_h^\circ} \int_e \{\!\{ {\bm K} \nabla p_h  \cdot \vec{n}_e \}\!\} [\![ v_h ]\!]
+
\epsilon
\sum_{e\in \Gamma_h^\circ} \int_e \{\!\{ {\bm K} \nabla v_h  \cdot \vec{n}_e \}\!\} [\![ p_h ]\!]
+
\sum_{e\in \Gamma_h^\circ} \int_e \frac{\sigma_{e,p}}{|e|} [\![ p_h ]\!] [\![ v_h ]\!],
\end{dmath}
and
\begin{dmath}
\ell_p(v_h) = 
 \sum_{E\in \mathcal{T}_h} \int_E  (q^I-q^P)  v_h.
\end{dmath}

\section{CCG method for transport}\label{sec:ccg2}
The transport PDE \eqref{eq_model3} is discretized in space using an interior penalty DG framework for diffusion, and upwinding for the convection term.  We seek $c_h\in V_h^{\textrm{ccg}}$ so that
\begin{align}
B_c(c_h,v_h) 
+
\sum_{E\in \mathcal{T}_h} \int_E v_h \frac{\partial}{\partial t}(\phi c_h) 
 &= \ell_c (v_h),\quad \forall v_h \in V_h^{\textrm{ccg}}, 
 \label{eq_DG_transport} 
\end{align}
with
\begin{dmath}
B_c(c_h,v_h) = 
\sum_{E\in \mathcal{T}_h} \int_E
\bigg(
 {\bm D}(\vec{u}) \nabla c_h \cdot \nabla v_h
+  c q^P  v_h
-   (\vec{u}  \cdot \nabla v_h) c_h
\bigg)
+
 \sum_{e\in \Gamma_h^\circ} \int_e   c_h^\uparrow [\![ v_h ]\!]
-
\sum_{e\in \Gamma_h^\circ} \int_e 
\{\!\{ {\bm D}(\vec{u}) \cdot \vec{n}_e \nabla c_h \}\!\} [\![ v_h ]\!]
+
\epsilon
\sum_{e\in \Gamma_h^\circ} \int_e
 \{\!\{ {\bm D}(\vec{u}) \nabla v_h \cdot \vec{n}_e  \}\!\} [\![ c_h ]\!]
+
 \sum_{e\in \Gamma_h^\circ} \int_e
 \frac{\sigma_{e,c}}{|e|} [\![ c_h ]\!] [\![ v_h ]\!]
 ,
\end{dmath}
and
\begin{dmath}
\ell_c(v_h) = 
 \sum_{E\in \mathcal{T}_h} \int_E   \widetilde{c} q^I  v_h,
\end{dmath}
where $\sigma_{e,c}>0$ is a penalty parameter.  The upwind term $c_h^\uparrow$ on a face $e\in \Gamma_h^\circ$ (shared by elements $E_1$ and $E_2$), with normal vector $\vec{n}_e$ oriented from $E_1$ to $E_2$ is defined as
\[
c_h^\uparrow
=
\begin{cases}
c_h  |_{E_1}, &\mbox{ if } \vec{u}\cdot \vec{n}_e \ge 0,
\\
c_h  |_{E_2}, &\mbox{ if } \vec{u}\cdot \vec{n}_e < 0 .
\end{cases}
\] 
\section{Fully discrete approximation}\label{sec:splitting}
To arrive at a fully discrete linearized scheme, finite differences in time and a time-lagging technique are applied.  Due to the relatively mild coupling between flow and transport, given current time step data $C_h(\vec{x},t_n)$, we first solve equation \eqref{eq_DG_flow} for $p_h(\vec{x},t_n)$.  Then, we reconstruct the Darcy flux $\vec{u}(\vec{x},t_n)$ based on $p_h(\vec{x},t_n)$.  This flux is inserted into \eqref{eq_DG_transport} to obtain $C_h(\vec{x},t_{n+1})$.  See Fig.~\ref{fig:impes_doodle} for a visualization of this linearization.
\begin{figure}[htb!]
\centering
\newlength{\bubblewidth}
\setlength{\bubblewidth}{0.62\linewidth} 
\definecolor{accent}{gray}{0.25}         

\begin{tikzpicture}[
  scale=1.00, every node/.style={transform shape},
  >=Latex,
  arrow/.style={-{Latex[length=2.2mm]}, line width=.7pt, draw=accent},
  bubble/.style={
    rectangle, rounded corners=1.2mm, draw=accent, line width=.7pt,
    fill=black!1, text width=\bubblewidth,
    inner xsep=4mm, inner ysep=3mm, align=left, font=\scriptsize
  },
  node distance=6mm
]

\node[bubble] (b1) {%
  {\bfseries\color{accent} Step 1: Flow (Darcy)}\\
  Assume \(c_h(\mathbf x,t_n)\) is given. Solve 
equation \eqref{eq_DG_flow} for \(p_h(\mathbf x,t_n)\).
};

\node[bubble, below=of b1] (b2) {%
  {\bfseries\color{accent} Step 2: Flux reconstruction}\\
  Using \(p_h(\mathbf x,t_n)\) (Step 1), reconstruct \(\tilde{\mathbf u}(\mathbf x,t_n)
= - \frac{\mathbf K}{ \mu(c_h({\mathbf x},t_n)) } \nabla p_h(\mathbf x,t_n)
  \).
};

\node[bubble, below=of b2] (b3) {%
  {\bfseries\color{accent} Step 3: Transport update}\\
  From \(c_h(\mathbf x,t_n)\) and \(\tilde{\mathbf u}(\mathbf x,t_n)\),
  solve equation \eqref{eq_DG_transport} to obtain \(c_h(\mathbf x,t_{n+1})\).
};

\draw[arrow] (b1) -- (b2);
\draw[arrow] (b2) -- (b3);

\newcommand{\loopsep}{6mm} 
\coordinate (r3) at ($(b3.east)+(\loopsep,0)$);
\coordinate (r1) at ($(b1.east)+(\loopsep,0)$);

\draw[arrow, rounded corners=1.2mm, line cap=round, line join=round,
      shorten >=1.5pt, shorten <=1.5pt]
      (b3.east) -- (r3)
      -- node[midway, right, font=\tiny\itshape, text=accent]
         {advance \(n\!\leftarrow\!n+1\)}
      (r1) -- (b1.east);
\end{tikzpicture}
\caption{Diagram of the linearization strategy. Each step occurs once per time step.}
\label{fig:impes_doodle}
\end{figure}
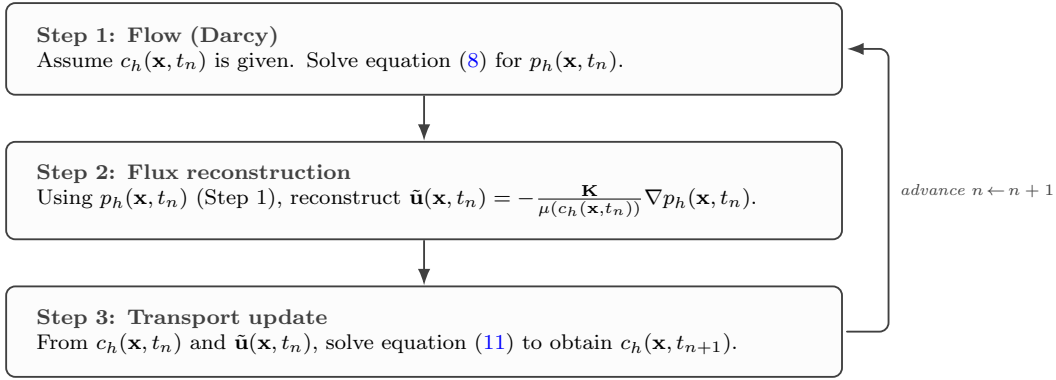
	We use implicit time-stepping for improved stability (and larger stable time steps).  As the CCG method has spatial accuracy which is comparable to $P_1$-DG, we focus on backward Euler (BE) and Crank–Nicolson (CN) finite differences in time.
	
 \section{Numerical experiments}\label{sec:numerics}
 For all numerical experiments below, we utilize backward Euler time stepping unless stated otherwise. For the CCG method, all integrals are safely approximated with one-point quadrature rules. As a result, the CCG mass matrix is diagonal.
\subsection{Manufactured solution} 
A smooth manufactured solution is selected for this example.  The domain is $\Omega=[0,1]^2$, and the simulation is from $t=0$ until $t=0.5$, with $\Delta t=10^{-4}$. For the remaining parameters we put $\phi=1$, $\mu(c) = ( 1 + 0.05 c )^{4} $, ${\bf K} = {\bf I}$, and
\[
p(x,y,t) = e^{-t} \sin(\pi x)\sin(\pi y),
\quad
c(x,y,t) = (t^2/2) + \cos(2\pi x) \cos(2\pi y) 
.
\]
For viscosity we take $\mu(c) = (1 + 0.0524\cdot c)^{4.75}$.  The initial condition is taken as the $L^2$-projection of exact solution evaluated at $t=0$.  We note that with this choice of parameters, the PDE system will have nonzero source terms $q^I$ and $q^P$ which can be determined from the manufactured solution.  For convenience, we use Dirichlet boundary conditions on $\partial\Omega$.  A sequence of uniform triangulations is used.  The CCG and $P_1$-DG methods are compared ($\epsilon=-1$ and $\sigma_{e,c}=\sigma_{e,p}=14$); Tables~\ref{tab:manu_p} and~\ref{tab:manu_c} have the $L^2$-norm errors and convergence rates.
 \begin{table}[htb!] 
 \centering
 \begin{tabular}{rccccc}
\toprule
$|\mathcal{T}_h|$ & CCG Error & CCG Rate & $P_1$-DG Error & $P_1$-DG Rate \\
\hline
128   & $1.3161\times10^{-2}$ & --       & $1.6006\times10^{-2}$ & --       \\
512   & $3.1102\times10^{-3}$ & $2.0812$& $3.5433\times10^{-3}$ & $2.1754$\\
2048  & $7.5379\times10^{-4}$ & $2.0448$& $8.1496\times10^{-4}$ & $2.1203$\\
8192  & $1.8609\times10^{-4}$ & $2.0182$& $1.9422\times10^{-4}$ & $2.0690$\\
32768 & $4.6290\times10^{-5}$ & $2.0072$& $4.7337\times10^{-5}$ & $2.0367$\\
\hline
\end{tabular}
\caption[]{Convergence rates and $L^2$-norm errors ($\|p_h-p\|_{L^2(\Omega)}$) for the time-dependent exact pressure solution.}
\label{tab:manu_p}
 \end{table} 
\begin{table}[htb!]
 \centering
\begin{tabular}{rccccc}
\toprule
$|\mathcal{T}_h|$ & CCG Error & CCG Rate & $P_1$-DG Error & $P_1$-DG Rate \\
\hline
128   & $1.5196\times10^{-2}$ & --       & $2.0561\times10^{-2}$ & --       \\
512   & $3.9185\times10^{-3}$ & $1.9553$& $4.4553\times10^{-3}$ & $2.2063$\\
2048  & $9.8989\times10^{-4}$ & $1.9850$& $9.5430\times10^{-4}$ & $2.2230$\\
8192  & $2.4841\times10^{-4}$ & $1.9945$& $2.1312\times10^{-4}$ & $2.1628$\\
32768 & $6.2199\times10^{-5}$ & $1.9978$& $4.9689\times10^{-5}$ & $2.1007$\\
\hline
\end{tabular}
\caption[]{Convergence rates and $L^2$-norm errors ($\|c_h-c\|_{L^2(\Omega)}$) for the time-dependent exact concentration solution.}
\label{tab:manu_c}
\end{table} 
	Both the CCG and $P_1$-DG methods provide similar accuracy and rates of convergence (approximately order two) for both the pressure and concentration.  Further comparisons between the CCG and DG approximations for nonsmooth solutions is given in subsequent sections.
 
\subsection{Homogeneous porous medium} \label{sec:numerics_homo}
 	This experiment concerns the well-known quarter five spot problem. Here, the medium is $\Omega=[0,1]^2~m^2$ with homogeneous permeability ${\bm K} = 9.44 \cdot 10^{-3} {\bm I} ~m^2$ and porosity $\phi=0.2$.  The injection and production rates satisfy
 	\[
 	\int_\Omega q^I = 	\int_\Omega q^P = 0.018~ \frac{m^2}{s},
 	\]
 	where $q^I$ piecewise constant on $[0,0.1]^2$ and zero elsewhere, and $q^P$ piecewise constant on $[0.9,1]^2$ and zero elsewhere. No flow boundary conditions are imposed.  The initial concentration at the injection well is $\widetilde{c}=1$, and initial condition is $c_0 = 0$.  We take $\sigma_{e,p} =  \sigma_{e,c}= 1$ and $\epsilon=-1$.  The final simulation time is $T=7.5$ seconds, and the time step is $\Delta t = 0.05$ seconds. We take the following parameters for the diffusion-dispersion tensor defined in \eqref{eq:diffdisp}:
\[ 
    d_m = (1.8)\cdot 10^{-7}, ~
    a_l = (1.8)\cdot 10^{-5},~
    a_t = (1.8)\cdot 10^{-6}. 
\] 	
\begin{figure}[htb!]
    \centering
    \begin{subfigure}[t]{0.23\textwidth}
        \centering
        \caption*{Mesh}
        \vspace*{0.3ex}
        \includegraphics[width=\textwidth ,trim=0cm 0cm 1.62cm 0.05cm,clip]{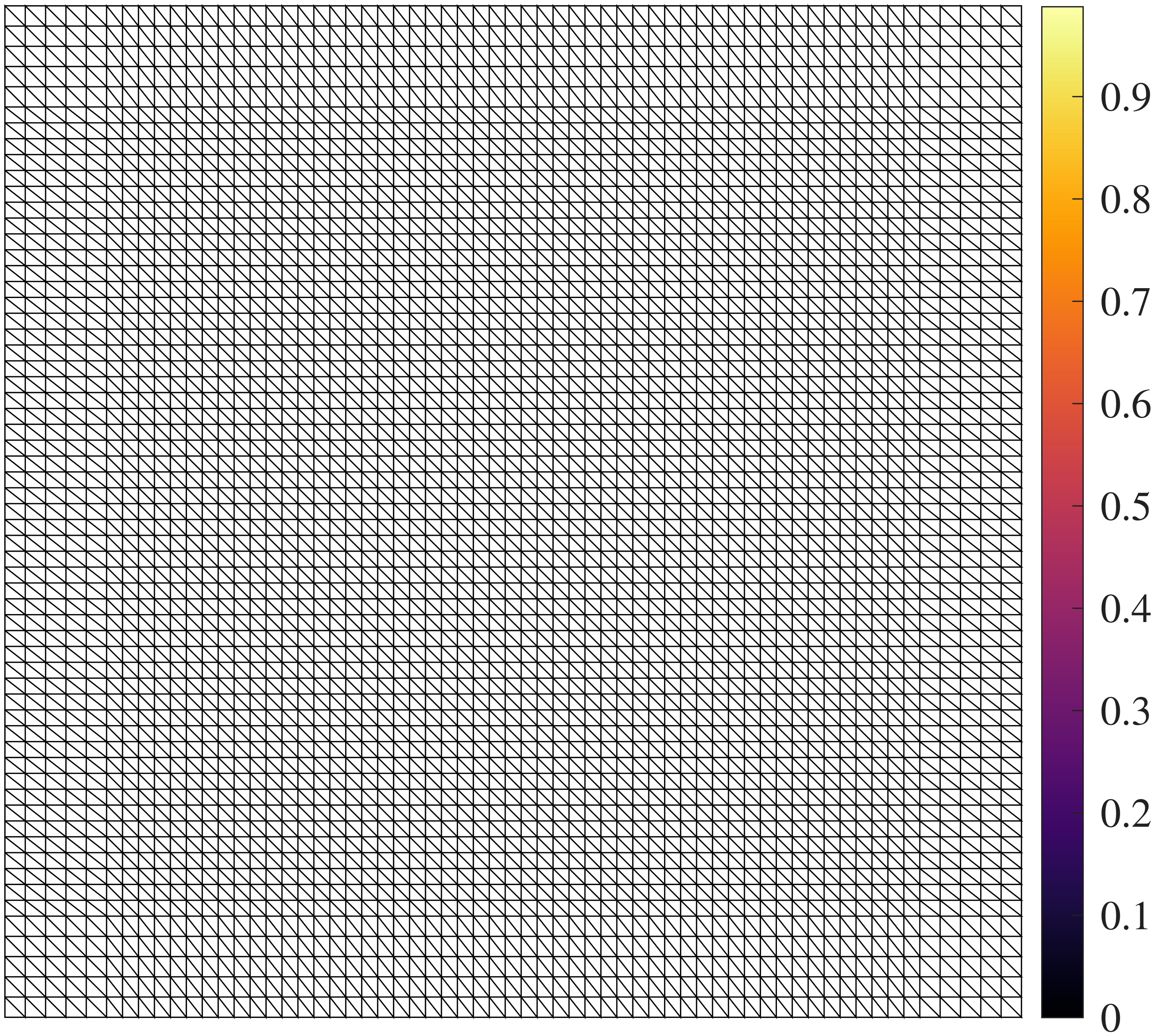} 
    \end{subfigure}
    \begin{subfigure}[t]{0.23\textwidth}
        \centering
        \caption*{$t=2.5$ (seconds)}
        \includegraphics[width=\textwidth , trim=0cm 0cm 1.62cm 0cm,clip]{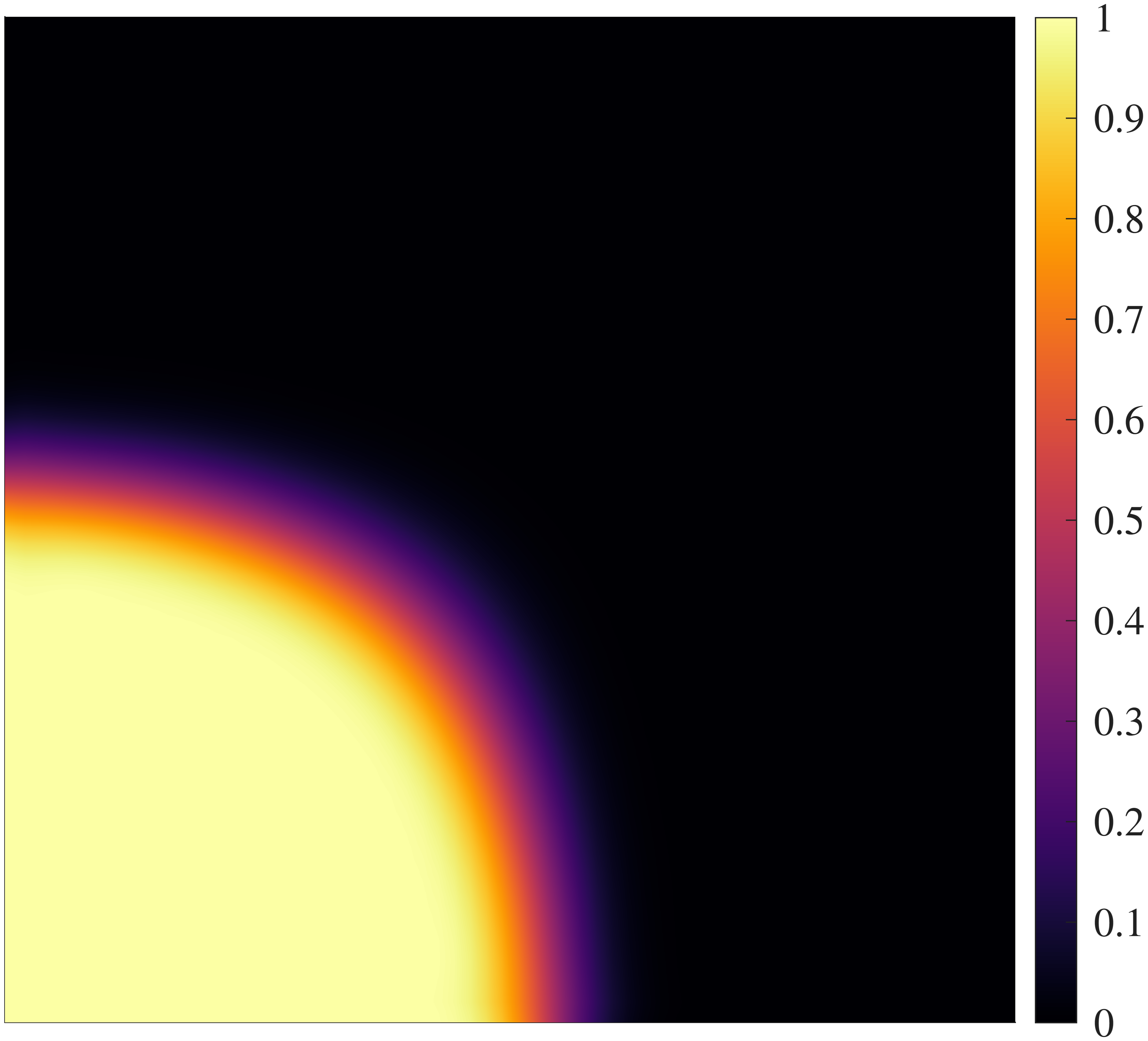} 
    \end{subfigure}
    \begin{subfigure}[t]{0.23\textwidth}
        \centering 
        \caption*{$t=5.0$ (seconds)}     
        \includegraphics[width=\textwidth , trim=0cm 0cm 1.62cm 0cm,clip]{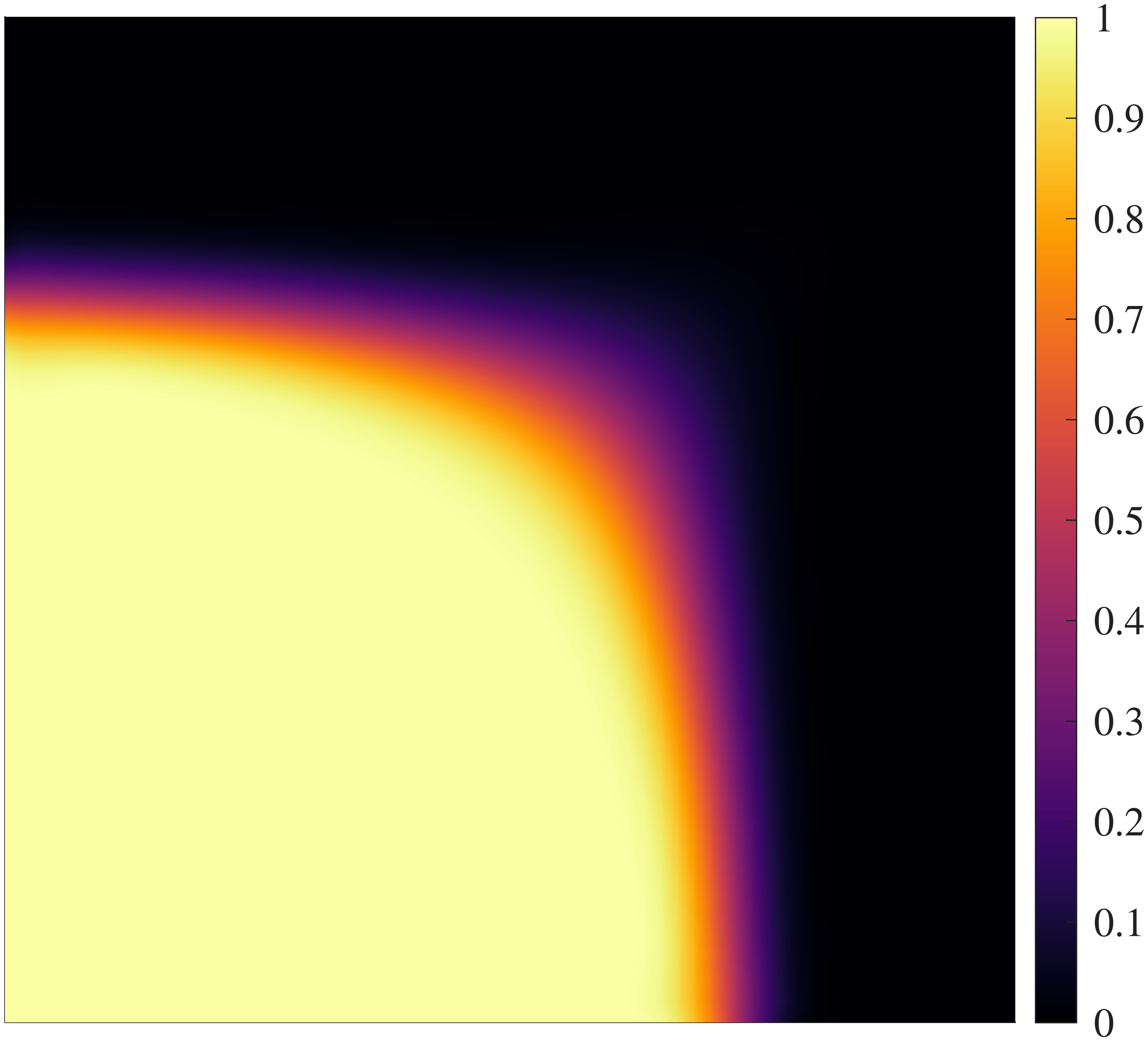} 
    \end{subfigure}
    \begin{subfigure}[t]{0.23\textwidth}
        \centering
		\caption*{$t=7.5$ (seconds)}     
        \includegraphics[width=1.112\textwidth]{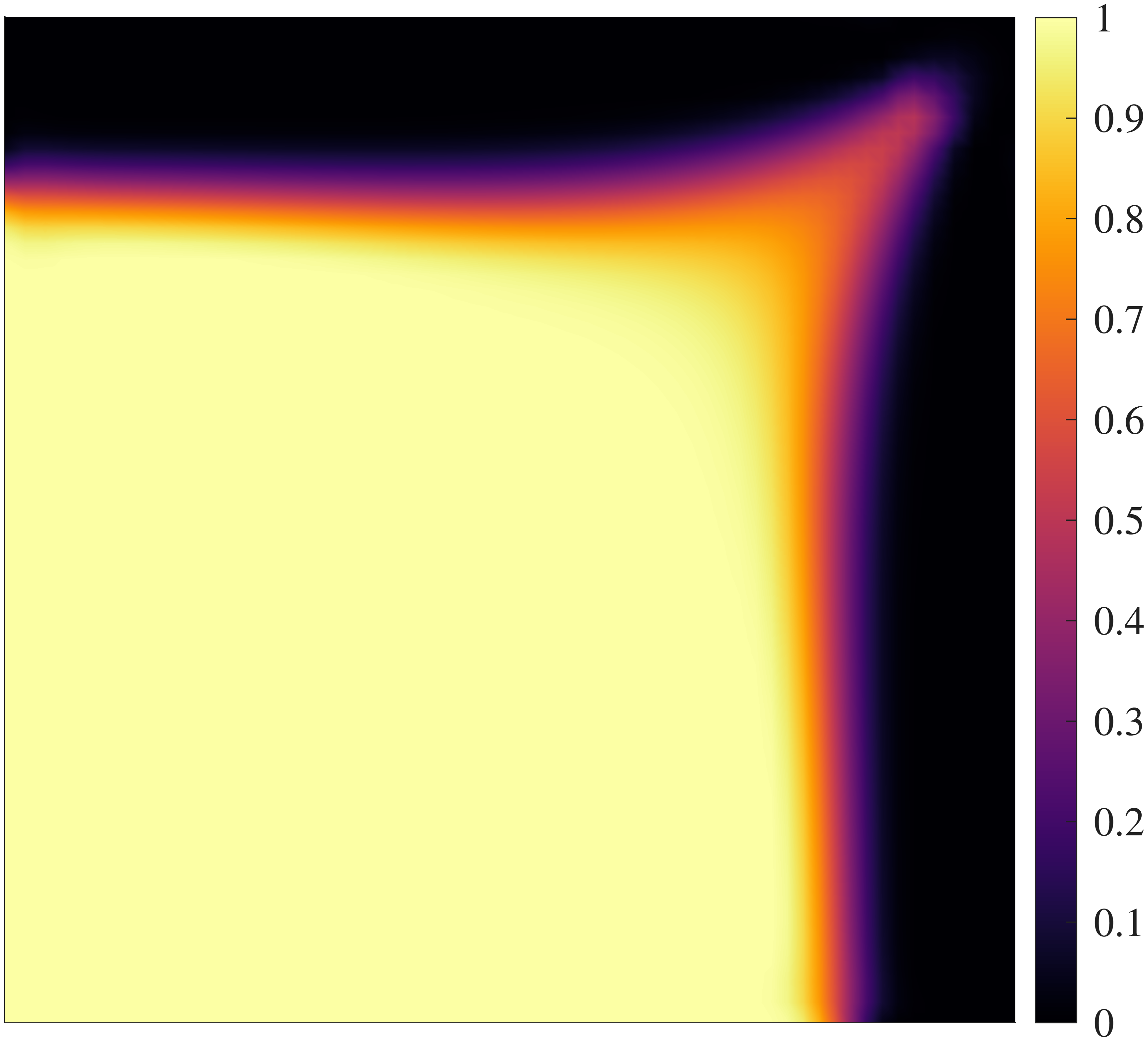} 
    \end{subfigure}
    
    \vspace{0.5em} 
    \begin{subfigure}[b]{0.23\textwidth}
        \centering 
        \includegraphics[width=\textwidth , trim=0cm 0cm 1.62cm 0cm,clip]{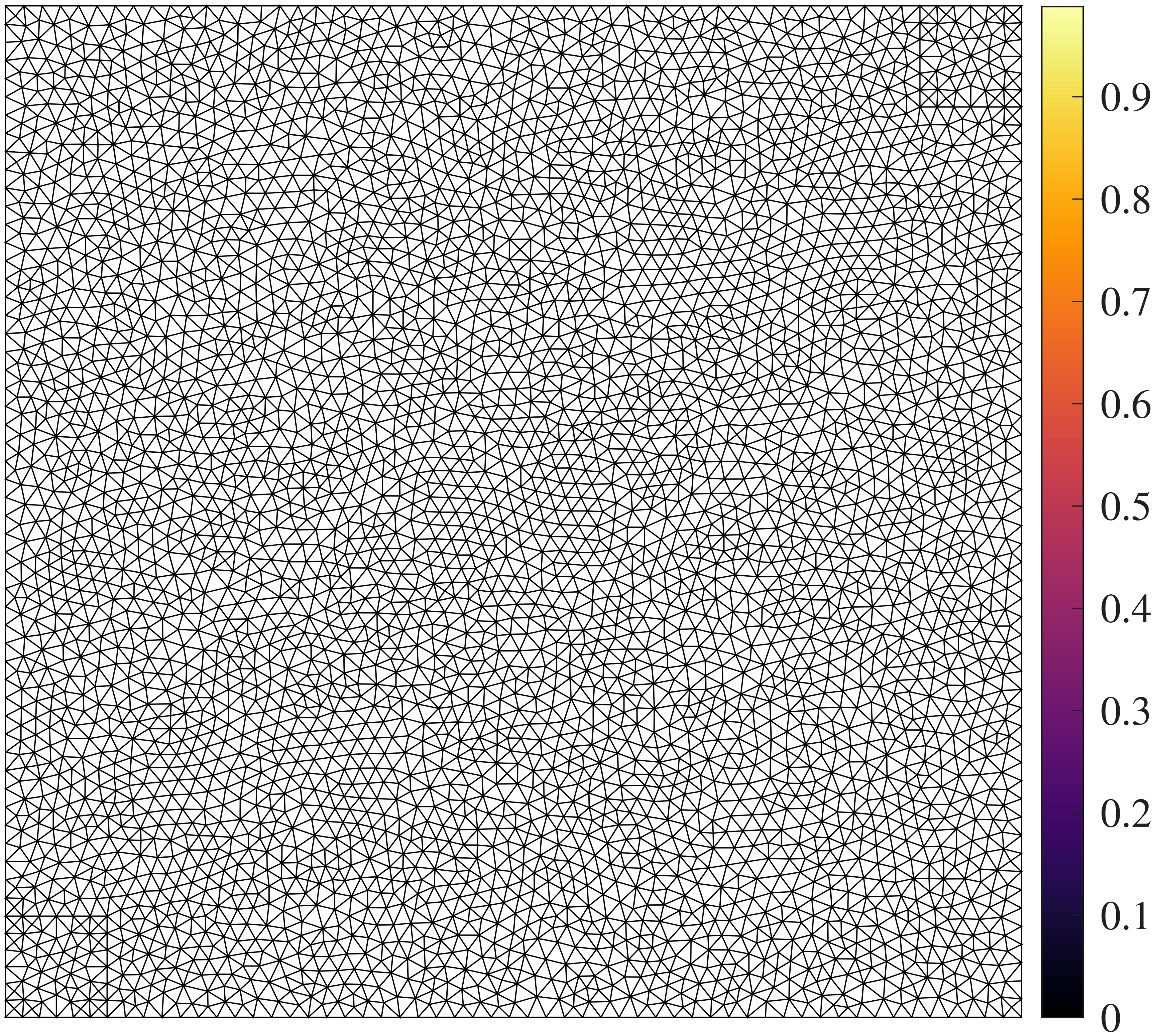}
    \end{subfigure}
    \begin{subfigure}[b]{0.23\textwidth}
        \centering
        \includegraphics[width=\textwidth , trim=0cm 0cm 1.62cm 0cm,clip]{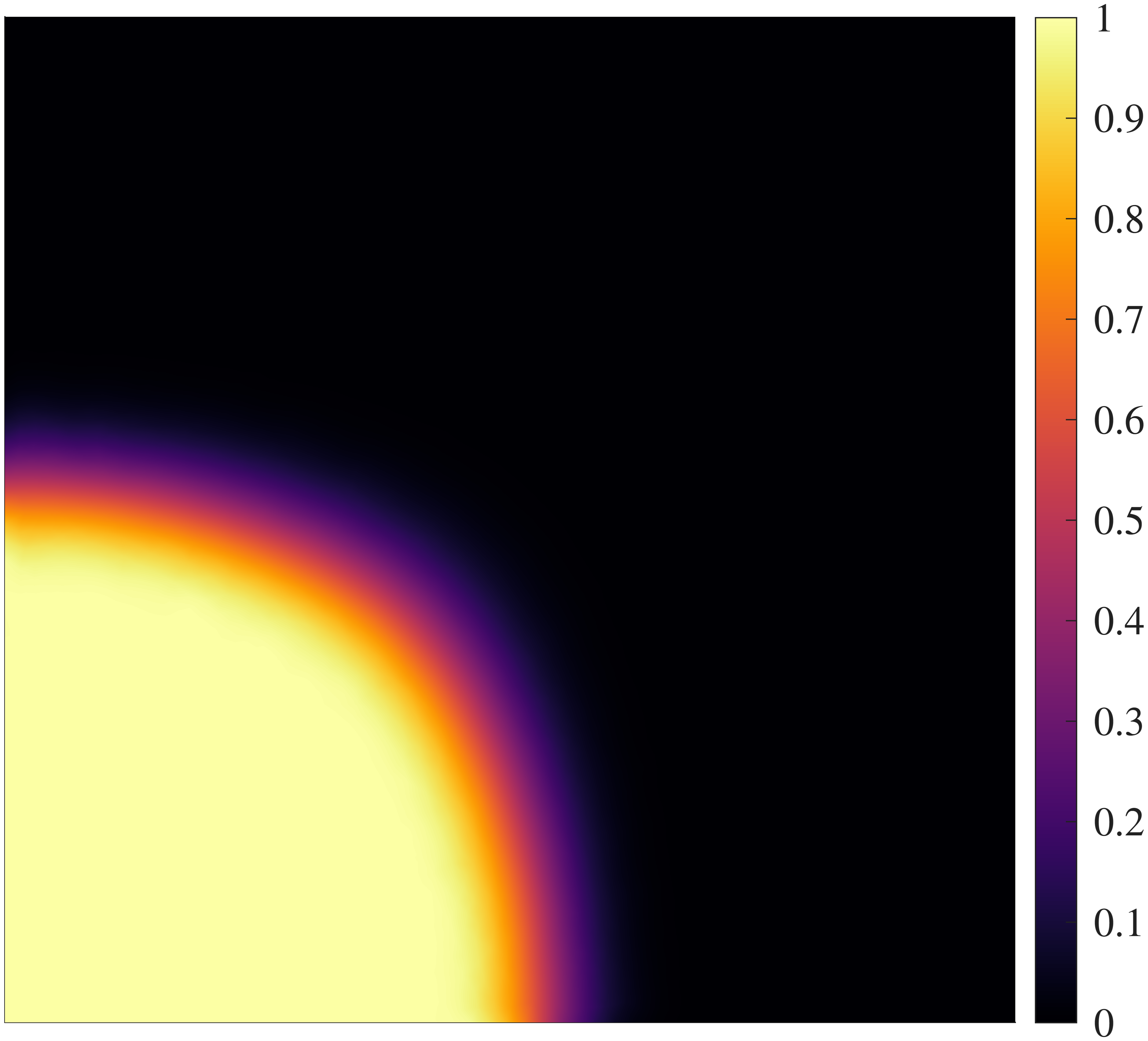}
    \end{subfigure}
    \begin{subfigure}[b]{0.23\textwidth}
        \centering
        \includegraphics[width=\textwidth , trim=0cm 0cm 1.62cm 0cm,clip]{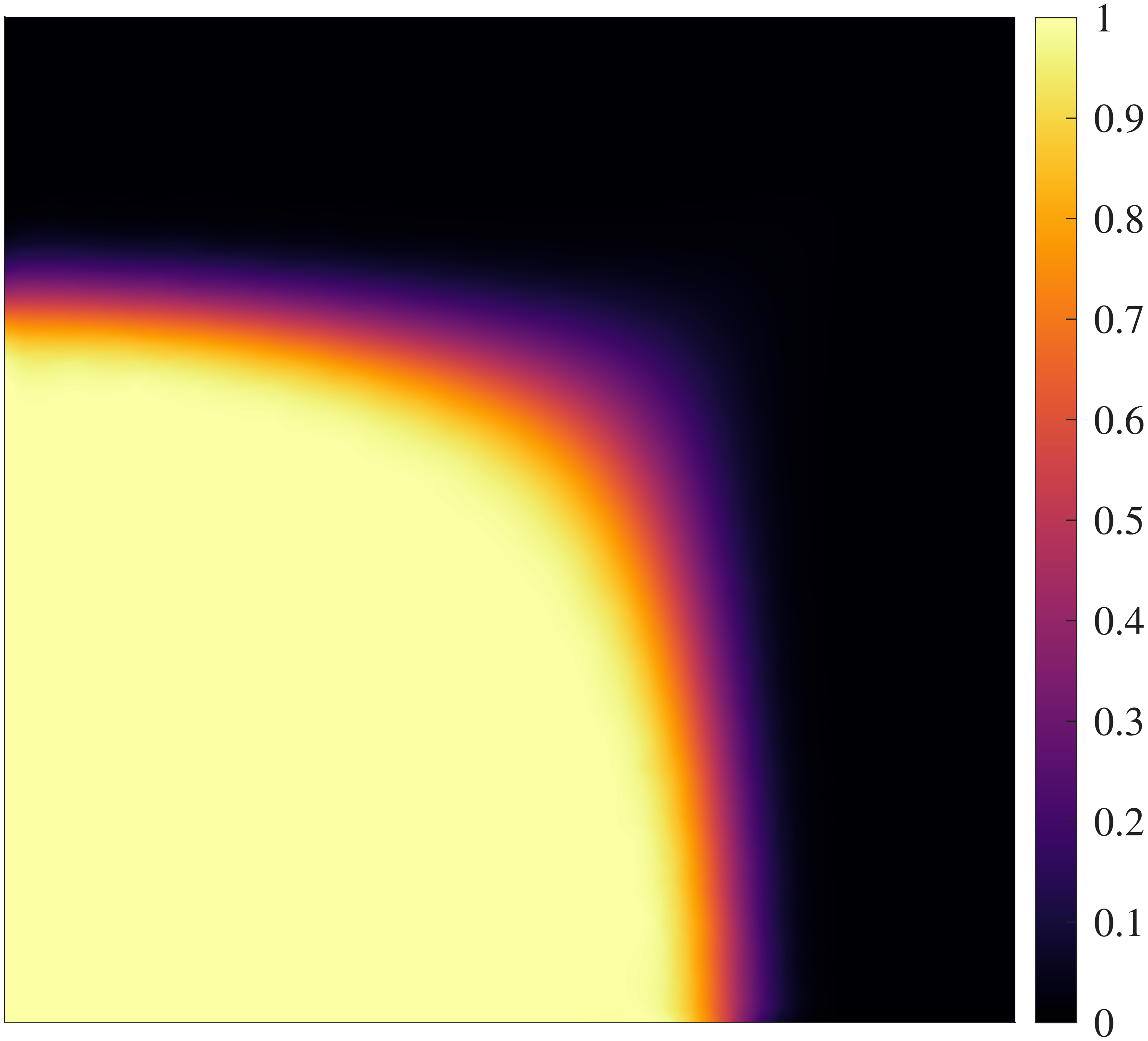}
    \end{subfigure}
    \begin{subfigure}[b]{0.23\textwidth}
        \centering
        \includegraphics[width=1.112\textwidth]{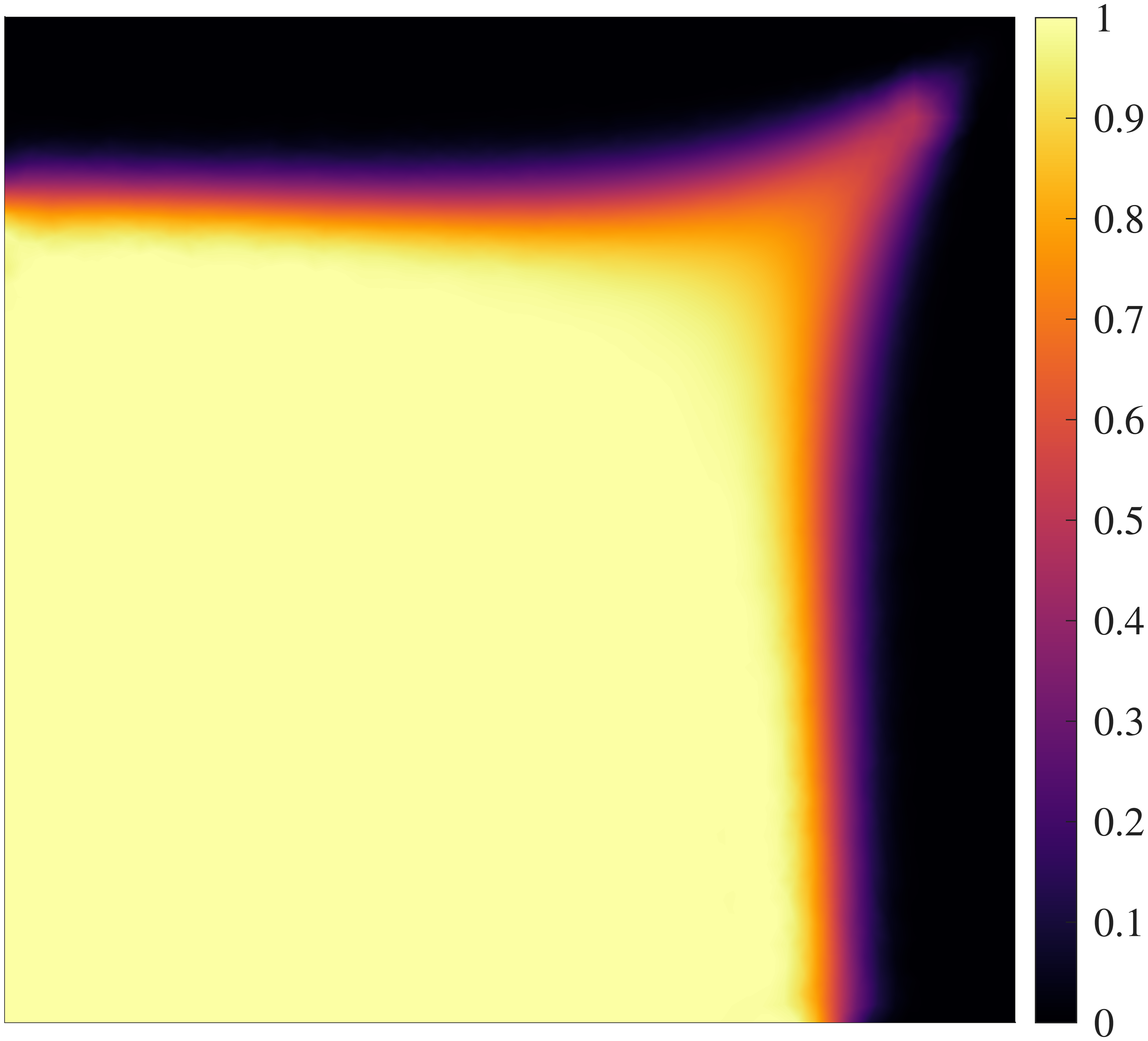}
    \end{subfigure} 
    \caption[]{ 
    Homogeneous quarter-five spot experiment from section \ref{sec:numerics_homo}.  Plot of the CCG fluid concentration: the first row corresponds to a structured mesh and the second row to an unstructured mesh.}
    \label{fig_sec:numerics_homo1}
\end{figure} 	
 	Both structured and unstructured triangular meshes are examined (both meshes have roughly 8000 elements).  The injection and production well locations are aligned with the mesh.  Fig.~\ref{fig_sec:numerics_homo1} displays the results of the experiment. We observe similar results for the structured and unstructured meshes.  As time increases, the fluid concentration moves from the injection well to the production well.
 	
 	To analyze the data from a different perspective, we consider a snapshot of the concentration along a specific cross section. The concentration profile at $t=5$ seconds along the line $y=x$ is visualized in Fig.~\ref{fig_sec:numerics_homo2}.  Backward Euler (BE) time stepping results in a more diffusive concentration front. On the other hand, the Crank–Nicolson (CN) method captures a sharper concentration front. Spatial mesh refinement also produces a sharper concentration front.
\begin{figure}[htbp]
  \centering
%
%
\begin{tikzpicture}[scale=0.65]
  \begin{axis}[
    name=main, 
    width=0.9\linewidth,
    height=0.5\linewidth,
    xlabel={$y=x$},
    ylabel={Concentration $c(x,y,t=5)$},
	legend style={at={(0.35,0.35)},anchor=north east,fill=none,draw=none},
    tick style={semithick},
    line width=1pt,
    xmin=0, xmax=1,
  ]
    \addplot+[no marks, densely dotted, draw=blue!] table [x=x,y=y, col sep=comma]{profile_struc_8k_CN.csv};
    \addlegendentry{8k triangles (CN)};

    \addplot+[no marks, dashed, draw=red!] table [x=x,y=y, col sep=comma]{profile_struc_32k_CN.csv};
    \addlegendentry{32k triangles (CN)};

    \addplot+[no marks, draw=black!] table [x=x,y=y, col sep=comma]{profile_struc_32k_BE.csv};
    \addlegendentry{32k triangles (BE)};

  \end{axis}

  \begin{axis}[
    name=zoom,
    at={(main.north east)}, anchor=north east, 
    xshift= -0.02\linewidth, yshift=-0.02\linewidth, 
    width=0.32\linewidth,
    height=0.28\linewidth,
    tick style={semithick},
    line width=1pt,
    legend=false,           
    xmin=0.49, xmax=0.6,   
    ymin=0.40, ymax=1.05,
  ]
    \addplot+[no marks, densely dotted, forget plot]
      table [x=x,y=y, col sep=comma]{profile_struc_8k_CN.csv};
    \addplot+[no marks, dashed, forget plot, draw=red!]
      table [x=x,y=y, col sep=comma]{profile_struc_32k_CN.csv};
    \addplot+[no marks, forget plot, draw=black!100]
      table [x=x,y=y, col sep=comma]{profile_struc_32k_BE.csv};
  \end{axis}
\end{tikzpicture} 
  \caption{Comparison of concentration profiles (at $t=5.0$ seconds) along the line $y=x$ with different meshes and time stepping.  Top-right: zoomed-in inset near $x=0.5$.}
  \label{fig_sec:numerics_homo2}
\end{figure}
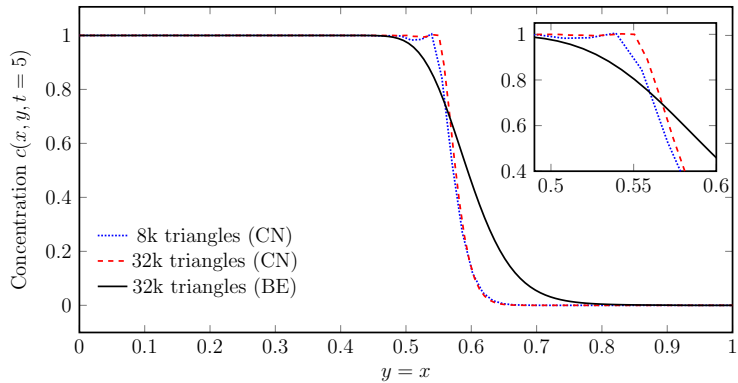
 	 	 
\subsection{Comparison of CCG and DG with full polynomial spaces} \label{sec:numerics_homo2} 	 	 
	Here we contrast the CCG solution to that of the traditional DG solution with full polynomial spaces.  The experiment from section \ref{sec:numerics_homo2} is revisited, but this time the DG method with full polynomial spaces is utilized to approximate the solution. All parameters are the same as in section \ref{sec:numerics_homo}, except we fix the mesh to be structured with 8192 elements. For the DG method, we take $\epsilon = -1$ for both flow and transport, and $\sigma_{e,p}=\sigma_{e,c}= 4 (\vec{n}_e\cdot ({\bm K}\vec{n}_e)) \frac{|e|}{\min\{|E_1|,|E_2|\}},$ where edge $e$ is shared by elements $E_1$ and $E_2$.  For boundary edges the same formula is used with $E_2=E_1$.
	\begin{figure}[htb!]
    \begin{subfigure}[b]{0.23\textwidth}
        \centering 
        \includegraphics[width=\textwidth , trim=0cm 0cm 1.6cm 0cm,clip]{ex_homo_unstruc3.png}
        \caption{CCG}
    \end{subfigure}
    \begin{subfigure}[b]{0.23\textwidth}
        \centering
        \includegraphics[width=\textwidth , trim=0cm 0cm 1.6cm 0cm,clip]{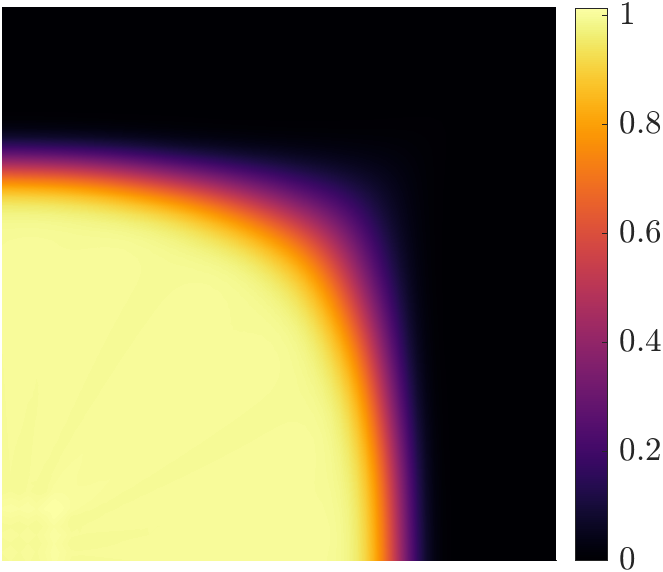}
        \caption{DG (degree one)}
    \end{subfigure}
    \begin{subfigure}[b]{0.233\textwidth}
        \centering
        \includegraphics[width=\textwidth , trim=0cm 0cm 1.6cm 0cm,clip]{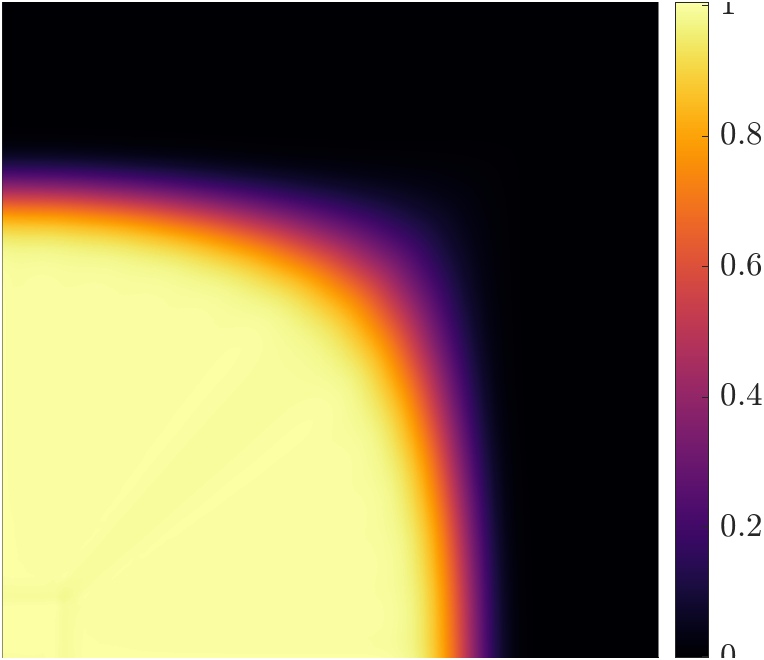}
        \caption{DG (degree two)}
    \end{subfigure}
    \begin{subfigure}[b]{0.23\textwidth}
        \centering 
        \includegraphics[width=1.19\textwidth]{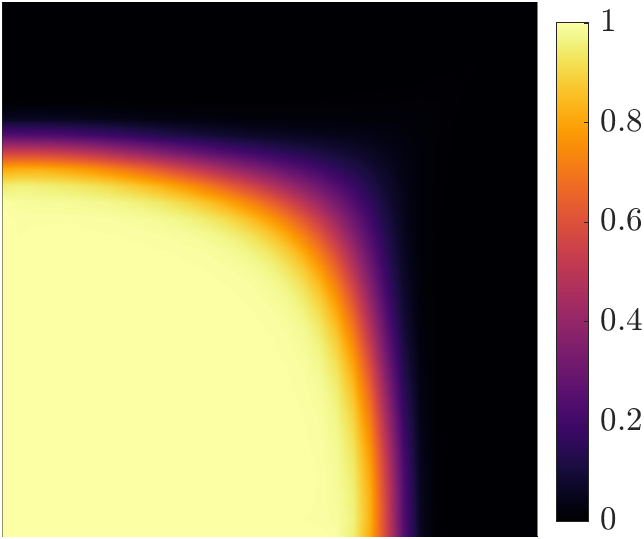}
        \caption{DG (degree three)}
    \end{subfigure}
    \caption[]{ 
    Comparison of CCG and DG concentration at $t=5$ for the homogeneous quarter-five spot experiment from section \ref{sec:numerics_homo}.}
    \label{fig_sec:numerics_homo3}
\end{figure}
	Fig.~\ref{fig_sec:numerics_homo3} plots the concentration at $t=5$, and it is evident that there is good agreement between the CCG and DG approximations.  
	
	To better assess the approximation quality, we consider again consider a snapshot of the concentration along the  cross section $y=x$. The profile snapshots can be found in Fig.~\ref{fig_sec:numerics_homo4}. Using DG in space, and resolving the temporal derivative with a higher-order time marching scheme produces a sharper concentration front. However, backward Euler in time and DG in space yields a more diffused profile.  This is similar to what was observed in Fig.~\ref{fig_sec:numerics_homo2} for the CCG method.    
\begin{figure}[htbp!]
  \centering
%
%
%
%
\begin{tikzpicture}[scale=0.6]
  \begin{axis}[
    name=main, 
    width=0.9\linewidth,
    height=0.5\linewidth,
    xlabel={$y=x$},
    ylabel={Concentration $c(x,y,t=5)$},
    legend style={at={(0.45,0.45)},anchor=north east,fill=none,draw=none},
    tick style={semithick},
    line width=1pt,
    xmin=0, xmax=1,
  ]
    \addplot+[no marks, dashed] table [x=x,y=y, col sep=comma]{profile_struc_8k_CN.csv};
    \addlegendentry{CG, 8k triangles (CN)};

    \addplot+[no marks] table [x=x, y=y, col sep=space]{profile_struc_DG1_4k_CN_out.dat};
    \addlegendentry{DG $P1$, 8k triangles (CN)};

    \addplot+[no marks, dashed, draw=black!] table [x=x, y=y, col sep=space]{profile_struc_DG2_4k_BE.dat};
    \addlegendentry{DG $P2$, 8k triangles (BE)};

    \addplot+[no marks, densely dotted, draw=black!] table [x=x, y=y, col sep=space]{profile_struc_DG2_4k_CN_out.dat};
    \addlegendentry{DG $P2$, 8k triangles (CN)};

    \addplot+[no marks, dashdotted, draw=blue!] table [x=x, y=y, col sep=space]{profile_struc_DG3_4k_CN_out.dat};
    \addlegendentry{DG $P3$, 8k triangles (CN)};
  \end{axis}

  \begin{axis}[
    name=zoom,
    at={(main.north east)}, anchor=north east, 
    xshift= 0.00\linewidth, yshift=-0.02\linewidth, 
    width=0.32\linewidth,
    height=0.28\linewidth,
    tick style={semithick},
    line width=1pt,
    legend=false, 
    xmin=0.45, xmax=0.62,     
    ymin=0.40, ymax=1.20,
  ]
    \addplot+[no marks, dashed, forget plot] table [x=x,y=y, col sep=comma]{profile_struc_8k_CN.csv};
    \addplot+[no marks,        forget plot, draw=red!] table [x=x, y=y, col sep=space]{profile_struc_DG1_4k_CN_out.dat};
    \addplot+[no marks, dashed,forget plot, draw=black!] table [x=x, y=y, col sep=space]{profile_struc_DG2_4k_BE.dat};
    \addplot+[no marks, densely dotted, forget plot, draw=black!] table [x=x, y=y, col sep=space]{profile_struc_DG2_4k_CN_out.dat};
    \addplot+[no marks, dashdotted, forget plot, draw=blue!] table [x=x, y=y, col sep=space]{profile_struc_DG3_4k_CN_out.dat};
  \end{axis}
\end{tikzpicture}

  \caption[]{Comparison of CCG and DG concentration profiles (at $t=5.0$ seconds) along the line $y=x$ with different time stepping.  Top-right: zoomed-in inset of the overshoot near $x=0.5$.}
  \label{fig_sec:numerics_homo4}
\end{figure}
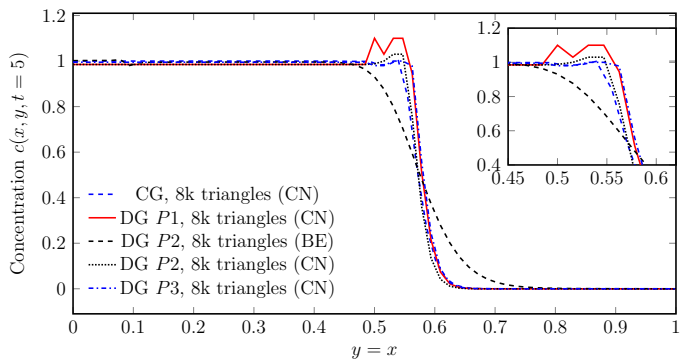
	It can also be seen that the DG solutions have more pronounced localized overshoot and undershoot behavior. Focusing on the high-order time marching scheme, the DG solutions have the sharpest fronts, but the CCG solution is also competitive.
\subsection{Computational cost of CCG and DG with full polynomial spaces} \label{sec:computational_cost1} 	
	In addition to solution quality, computational efficiency is also an important aspect of numerical simulations.  For the discretization given in \eqref{eq_DG_flow} of the Darcy problem, the CCG method has a total of $|\mathcal{T}_h|$ unknowns, whereas the $P_k$-DG scheme has 
	$ 
\binom{k+{\tt d}}{{\tt d }}
	\cdot |\mathcal{T}_h|$ unknowns.  Hence, the DG method with polynomial degree $k$ has $\binom{k+{\tt d}}{{\tt d }}$ more unknowns than the CCG scheme.  The CCG method can have significantly fewer unknowns than higher-order IPDG methods, but a better assessment of computational cost is the number of nonzero entries in the discretization matrix \cite{chang2018comparative}.  It is also worth mentioning that for the miscible displacement problem we must solve the coupled PDE system given by equations \eqref{eq_model1}, \eqref{eq_model2}, and \eqref{eq_model3}; as such, the number of primal unknowns will double.
		
	The CCG method is based on the linear reconstruction operator defined in~\eqref{eq_ccg_reconstruction}; and unlike classical DG methods, the analogous basis functions from the CCG method have support which can extend beyond a given element (see Appendix~\ref{sec:app} for examples). As a consequence of this larger support, the average number of nonzero entries per row can be somewhat large compared to $P_1$-IPDG.  Table~\ref{tab:ccg_vs_dg_cost1} displays various information about the number of nonzeros associated with the discretization matrix for the CCG and DG methods in two dimensions. 
\begin{table}[htbp]
\centering
\caption{Results by method and mesh type in two-dimensions.  The abbreviation `nnz' refers to the total number of nonzero entries in the discretization matrix for a given method. Similarly, `nnz per row' refers to the average number of nonzero entries per row in the discretization matrix.}
\begin{tabular}{ll
                S[table-format=5.0]
                S[table-format=7.0]
                c
                }
\toprule
\textbf{Mesh} & \textbf{Method} & $|\mathcal{T}_h|$ & nnz & nnz per row\\
\midrule
\multirow{9}{*}{Structured}
  & $P_1$-DG   & 2048  & 66560   & 10.83 \\
  & $P_1$-DG   & 8192  & 268290  & 10.91 \\
  & $P_1$-DG   & 32768 & 1077200 & 10.95 \\
  & $P_2$-DG   & 2048  & 200060  & 16.28 \\
  & $P_2$-DG   & 8192  & 805630  & 16.39 \\
  & $P_2$-DG   & 32768 & 3233300 & 16.44 \\
  & CCG  & 2048  & 50622   & 24.71 \\
  & CCG  & 8192  & 208320  & 25.42 \\
  & CCG  & 32768 & 845410  & 25.80 \\
\midrule
\multirow{9}{*}{Unstructured}
  & $P_1$-DG   & 3424  & 111970  & 10.90 \\
  & $P_1$-DG   & 14006 & 460150  & 10.95 \\
  & $P_1$-DG   & 56528 & 1861300 & 10.97 \\
  & $P_2$-DG   & 3424  & 336290  & 16.36 \\
  & $P_2$-DG   & 14006 & 1381200 & 16.43 \\
  & $P_2$-DG   & 56528 & 5585500 & 16.46 \\
  & CCG  & 3424  & 85930   & 25.09 \\
  & CCG  & 14006 & 355540  & 25.38 \\
  & CCG  & 56528 & 1444700 & 25.55 \\
\bottomrule
\end{tabular}
\label{tab:ccg_vs_dg_cost1}
\end{table}	
	
	 In terms of total number of unknowns and nonzero entries, the CCG method has fewer than the DG methods with full polynomial spaces.  On the other hand, the stencil (which we will define as the average number of nonzeros per row) for the CCG method is quite large. Some techniques exist to reduce the stencil for the CCG method. For instance, certain trace interpolation operators that shrink the support of CCG basis functions have been explored in \cite{ref_ccg3}.

For computations in three-dimensions, the situation is similar: the CCG method has fewer unknowns and nonzero entries in the discretization matrix, but the average number of nonzeros per row is large.  The 3D results are collected in Table~\ref{tab:ccg_vs_dg_cost2}. In summary, in two and three dimensions, the $P_1$-IPDG method has roughly 24–40\% more nonzero entries in the discretization matrix when compared to CCG with the barycentric trace reconstruction (see Fig.~\ref{fig:ccg_vs_dg_cost_percent}). This represents a considerable computational saving, particularly given that two PDEs must be solved at every time step.  
\begin{table}[htbp!]
\centering
\caption{Results by method in three-dimensions (tetrahedral meshes).  The abbreviation `nnz' refers to the total number of nonzero entries in the discretization matrix for a given method. Similarly, `nnz per row' refers to the average number of nonzero entries per row in the discretization matrix.}
\begin{tabular}{l
                S[table-format=5.1]
                S[table-format=7.0]
                c
                }
\toprule
 \textbf{Method} & $|\mathcal{T}_h|$ & nnz & nnz per row\\
\midrule
   $P_1$-DG   & 3072   & 221952   & 18.06 \\
   $P_1$-DG   & 24576  & 1821696  & 18.53 \\
   $P_1$-DG   & 196608 & 14757888 & 18.76 \\
   $P_1$-DG   & 1572864 & 118800384& 18.88 \\   
   CCG        & 3072   & 157574   & 51.29 \\
   CCG        & 24576  & 1389062  & 56.52 \\
   CCG        & 196608 & 11644934 & 59.22 \\
   CCG        & 1572864& 95326214 & 60.60 \\
\bottomrule
\end{tabular}
\label{tab:ccg_vs_dg_cost2}
\end{table}		 
		 	 	
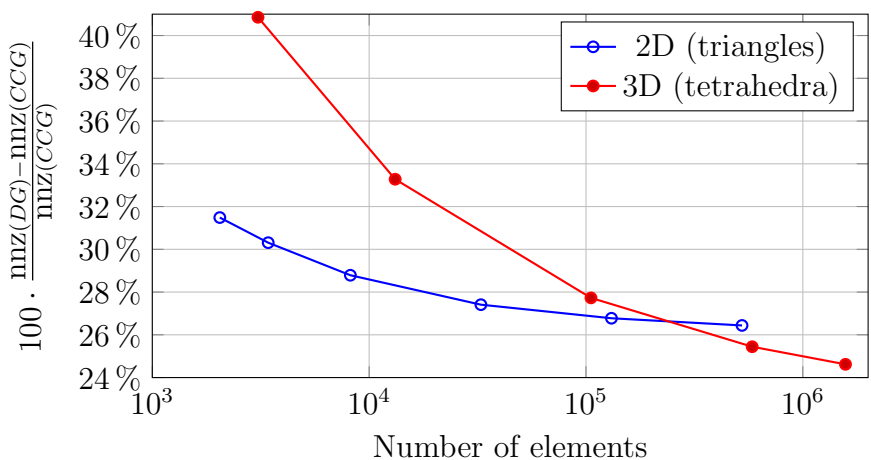
\begin{figure}[htb!]
\centering
\begin{tikzpicture}
\begin{axis}[
  width=11cm, height=6.4cm,
  xmode=log, log basis x=10,
  xmin=1e3, xmax=2e6,                 
  xtick={1e3,1e4,1e5,1e6},
  xticklabels={$10^{3}$,$10^{4}$,$10^{5}$,$10^{6}$},
  minor x tick num=0,                  
  xlabel={Number of elements},
ylabel={ $100\cdot\frac{\textrm{nnz}(DG)- \textrm{nnz}(CCG)}{\textrm{nnz}(CCG)}$  },
  yticklabel=\pgfmathprintnumber{\tick}\,\%,
  yticklabel style={/pgf/number format/fixed},
  ymin=24, ymax=41, ytick distance=2, minor y tick num=0,
  grid=both,
  mark options={solid},
]
\addplot+[thick, mark=o] table[ 
  row sep=\\,
  x=H,
  y expr={100*(\thisrow{DG1}/\thisrow{CCG}-1)}
] {
H        CCG        DG1 \\
2048     50622      66560 \\
3424     85930      111970 \\
8192     208320     268290 \\
32768    845410     1077200 \\
131072   3405406    4317184 \\
524288   13670622   17285120 \\
};

\addplot+[thick, mark=*] table[ 
  row sep=\\,
  x=H,
  y expr={100*(\thisrow{DG1}/\thisrow{CCG}-1)}
] {
H        CCG        DG1 \\
3072     157574      221952 \\
13182    728864      971412
24576    1389062     1821696 \\
105456   6179582     7892976
196608   11644934    14757888 \\
584016   35079422    44004336\\ 
1572864  95326214    118800384\\
};
\legend{2D (triangles),3D (tetrahedra)} 
\end{axis}
\end{tikzpicture}
\label{fig1}
\caption[]{CCG vs $P_1$-IPDG nonzero entry improvement (values larger than 0\% imply CCG is more efficient).  Vertical axis represents the percent increase of nonzero entries from the $P_1$-IPDG method over CCG.}
\label{fig:ccg_vs_dg_cost_percent}
\end{figure}

 \subsection{Heterogeneous porous medium (permeability lens)}\label{sec:numerics_lens} 
 Here a heterogeneous medium is considered.  The domain is $[0,1]^2~m^2$, and the permeability is $9.44 \cdot 10^{-6} ~m^2$ in the square $[0.25,0.5]^2$, and $9.44 \cdot 10^{-3} ~m^2$ everywhere else. Such a contrast in permeability creates a region where the fluid concentration cannot penetrate.  We take $\sigma_{e,p} = 10^{-1},~ \sigma_{e,c}= 10^{-2}$ and $\epsilon=1$.  All of the other parameters are the same as in section~\ref{sec:numerics_homo}. Fig.~\ref{fig_sec:numerics_lens1} displays snapshots of the simulation under mesh refinement.
\begin{figure}[htbp!]
   \centering
    \begin{subfigure}[t]{0.23\textwidth}
        \centering
        \caption*{$t=1.5$ (seconds)}
        \includegraphics[width=\textwidth , trim=0cm 0cm 1.6cm 0cm,clip]{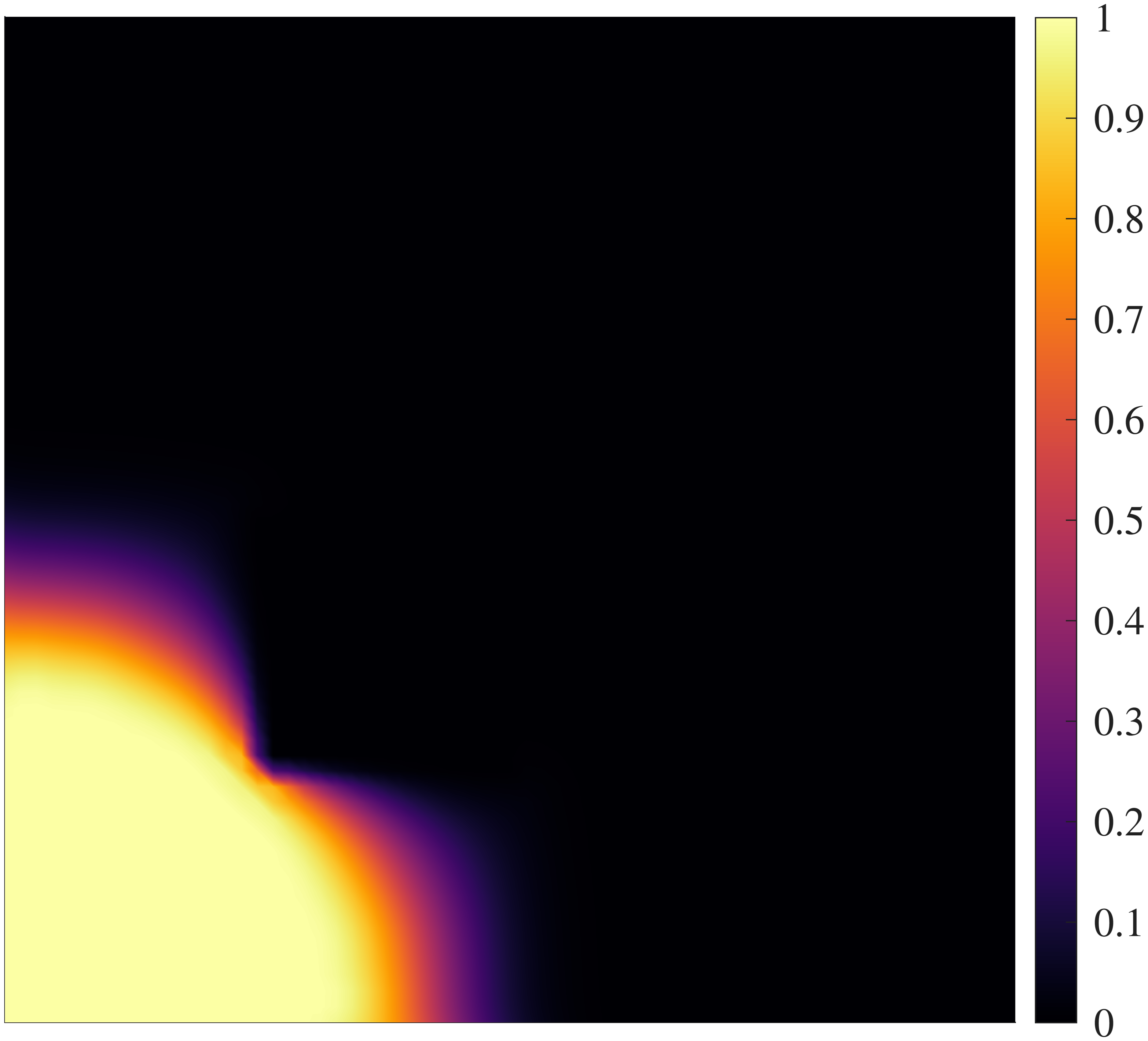} 
    \end{subfigure}
    \begin{subfigure}[t]{0.23\textwidth}
        \centering
        \caption*{$t=2.5$ (seconds)}
        \includegraphics[width=\textwidth , trim=0cm 0cm 1.6cm 0cm,clip]{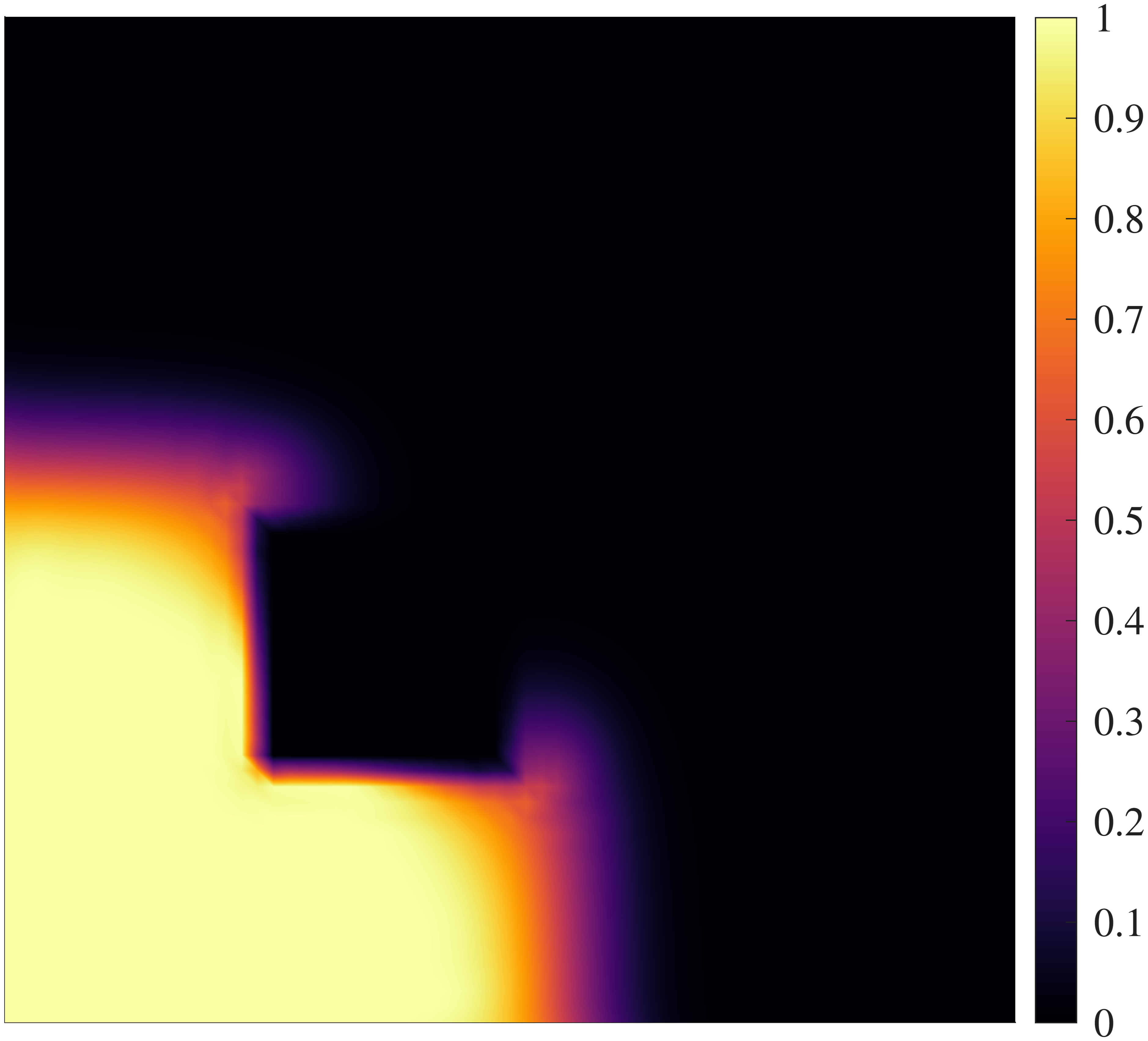} 
    \end{subfigure}
    \begin{subfigure}[t]{0.23\textwidth}
        \centering 
        \caption*{$t=5.0$ (seconds)}     
        \includegraphics[width=\textwidth , trim=0cm 0cm 1.6cm 0cm,clip]{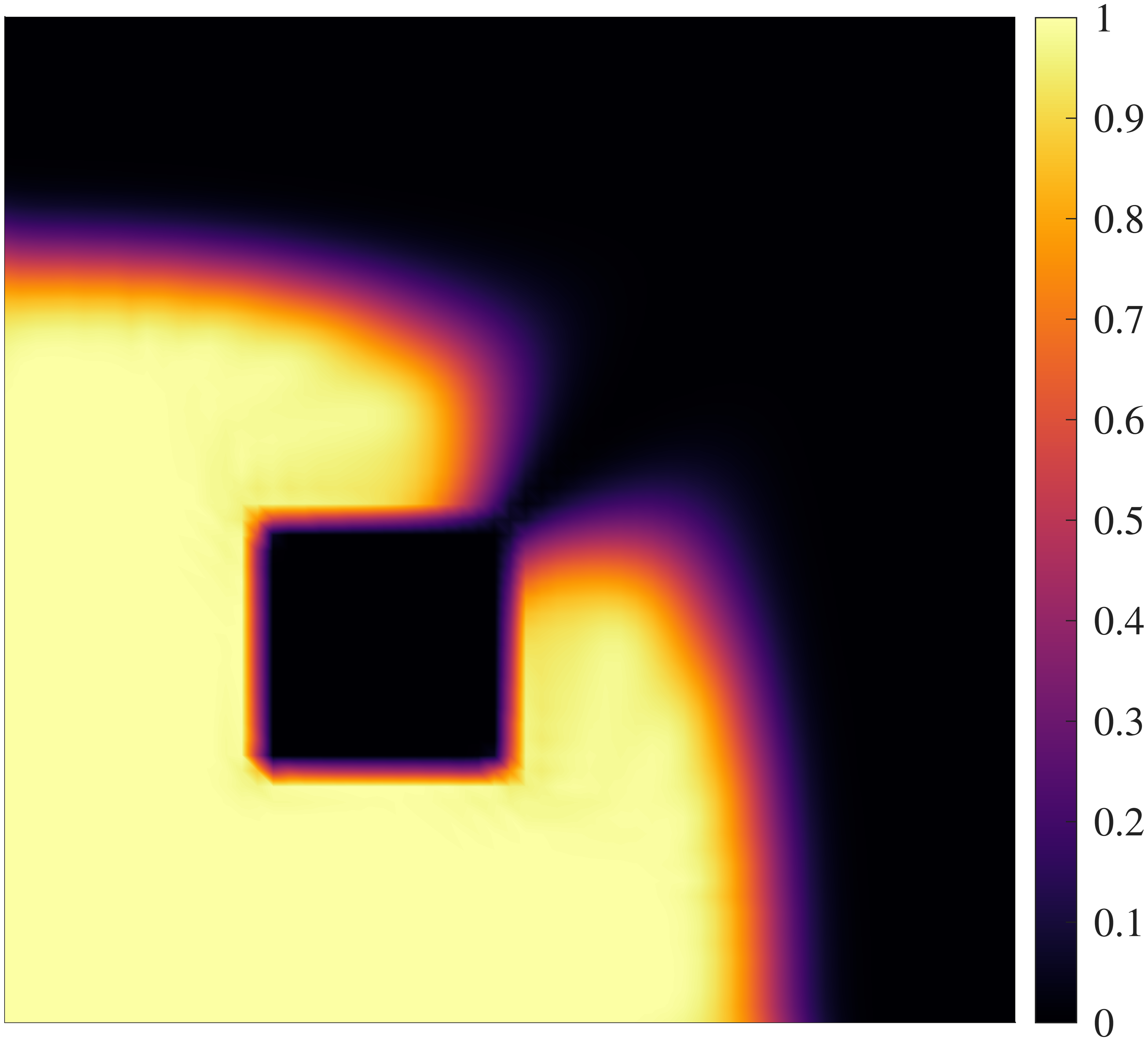} 
    \end{subfigure}
    \begin{subfigure}[t]{0.23\textwidth}
        \centering
		\caption*{$t=7.5$ (seconds)}     
        \includegraphics[width=1.112\textwidth]{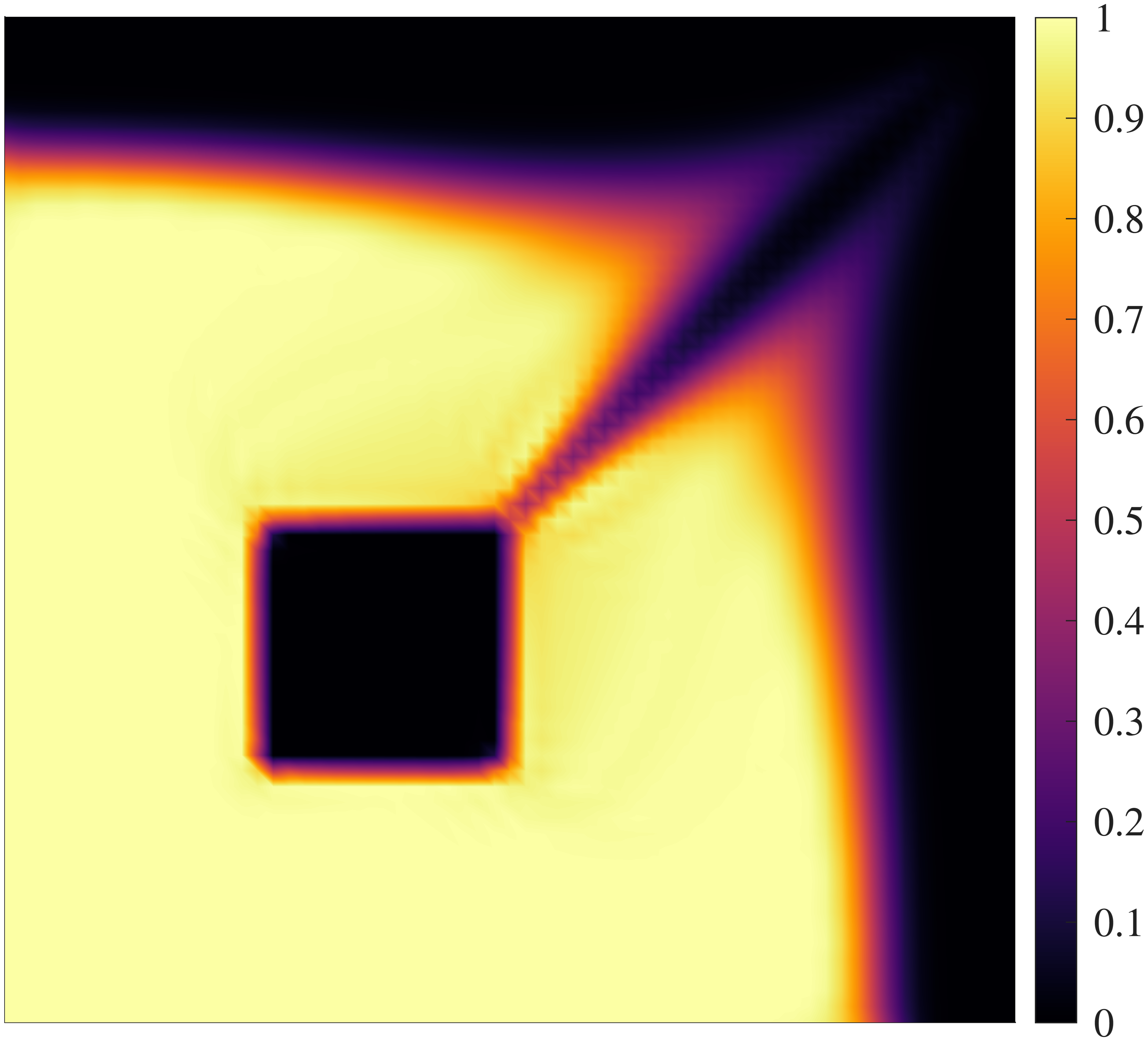} 
    \end{subfigure}
    
    \vspace{0.5em} 
    \begin{subfigure}[b]{0.23\textwidth}
        \centering 
        \includegraphics[width=\textwidth , trim=0cm 0cm 1.6cm 0cm,clip]{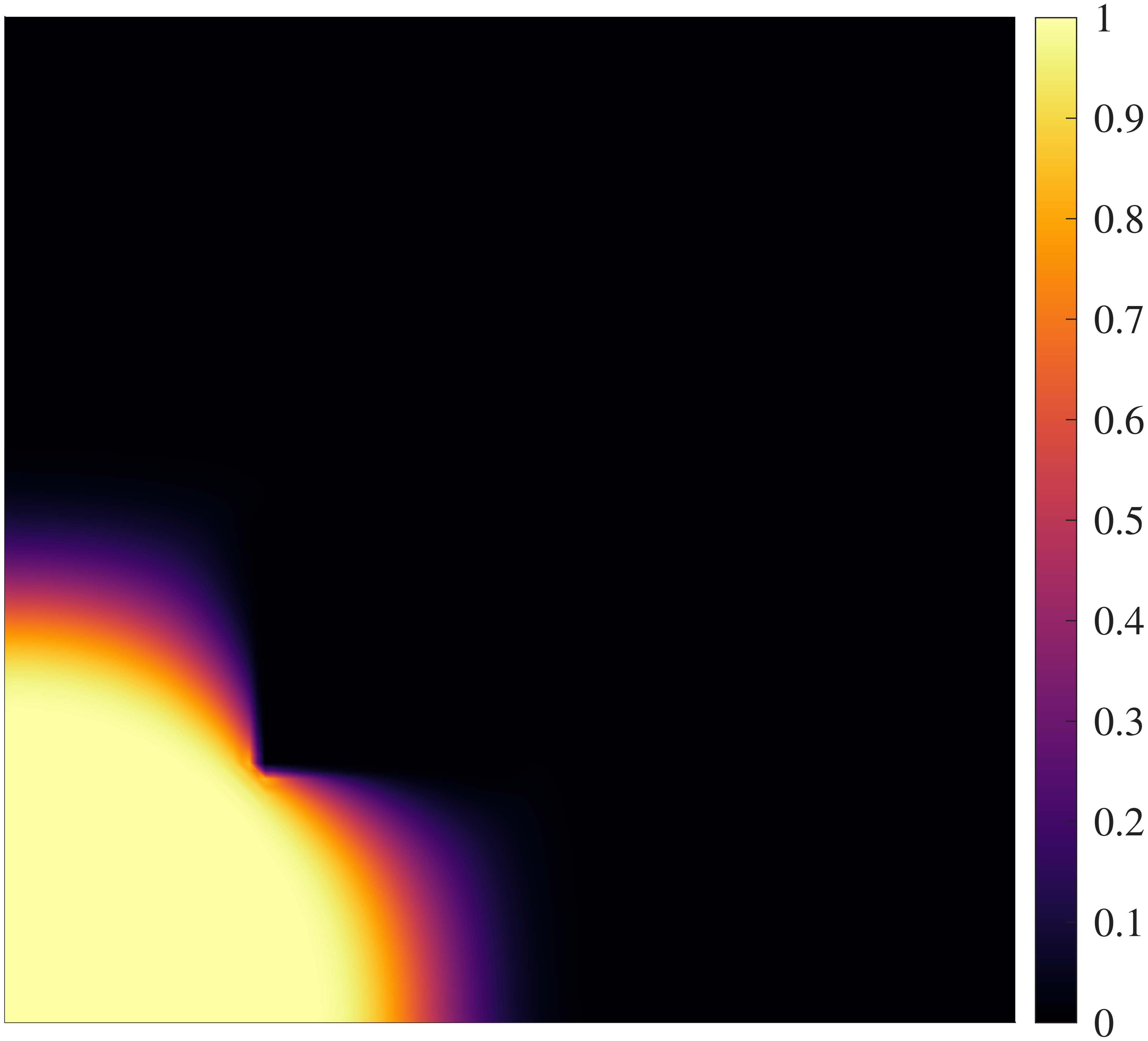}
    \end{subfigure}
    \begin{subfigure}[b]{0.23\textwidth}
        \centering
        \includegraphics[width=\textwidth , trim=0cm 0cm 1.6cm 0cm,clip]{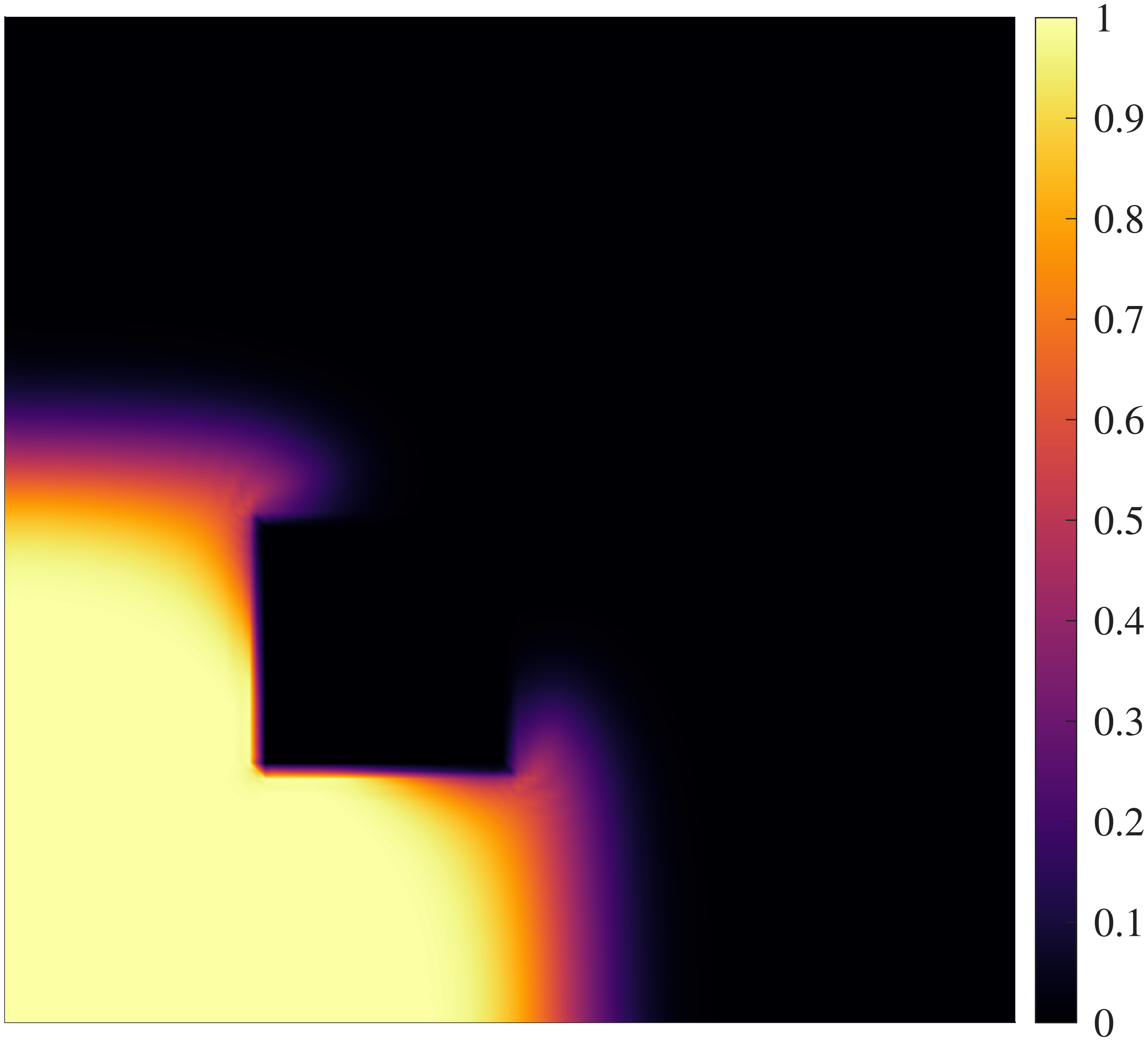}
    \end{subfigure}
    \begin{subfigure}[b]{0.23\textwidth}
        \centering
        \includegraphics[width=\textwidth , trim=0cm 0cm 1.6cm 0cm,clip]{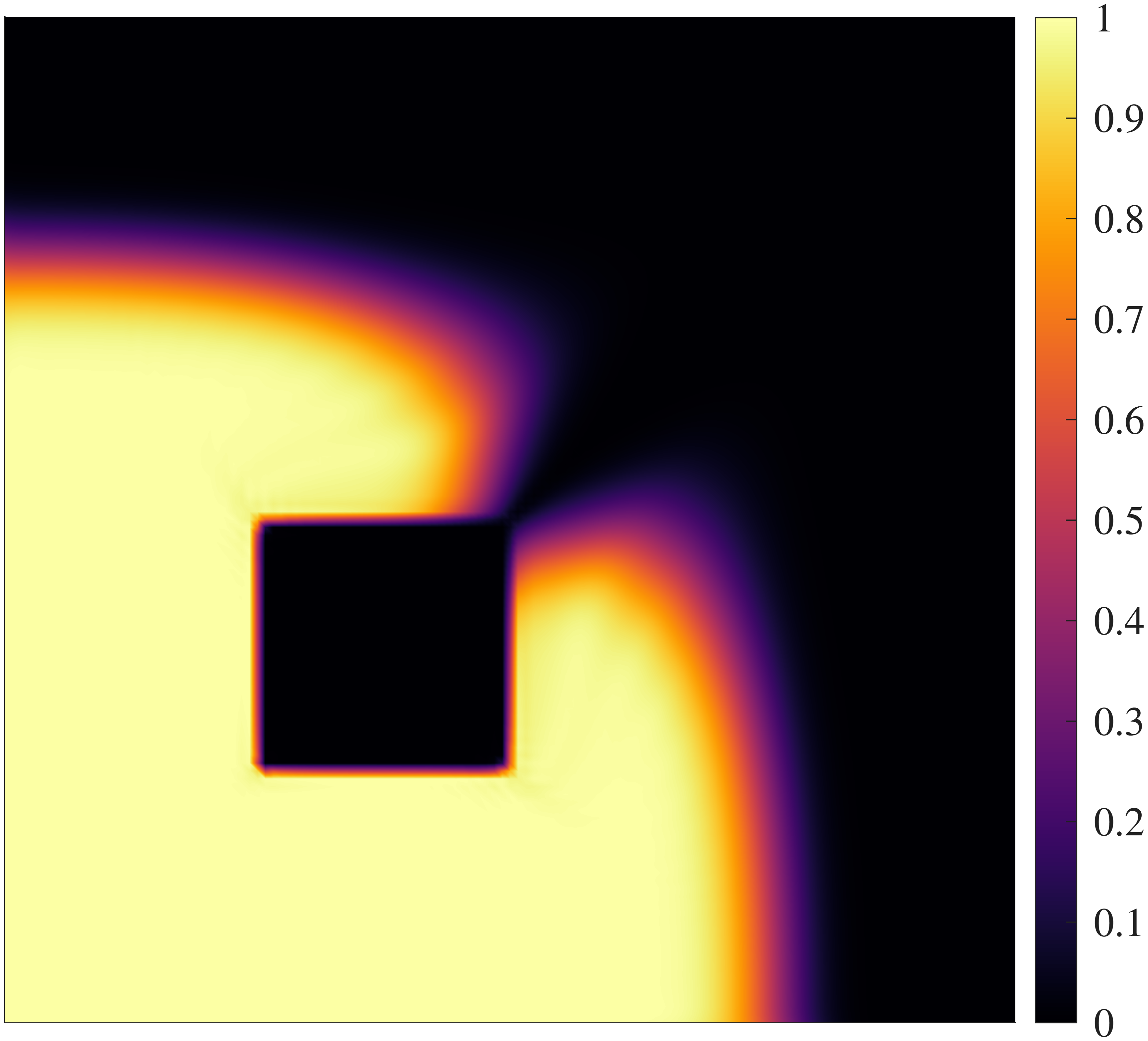}
    \end{subfigure}
    \begin{subfigure}[b]{0.23\textwidth}
        \centering
        \includegraphics[width=1.112\textwidth]{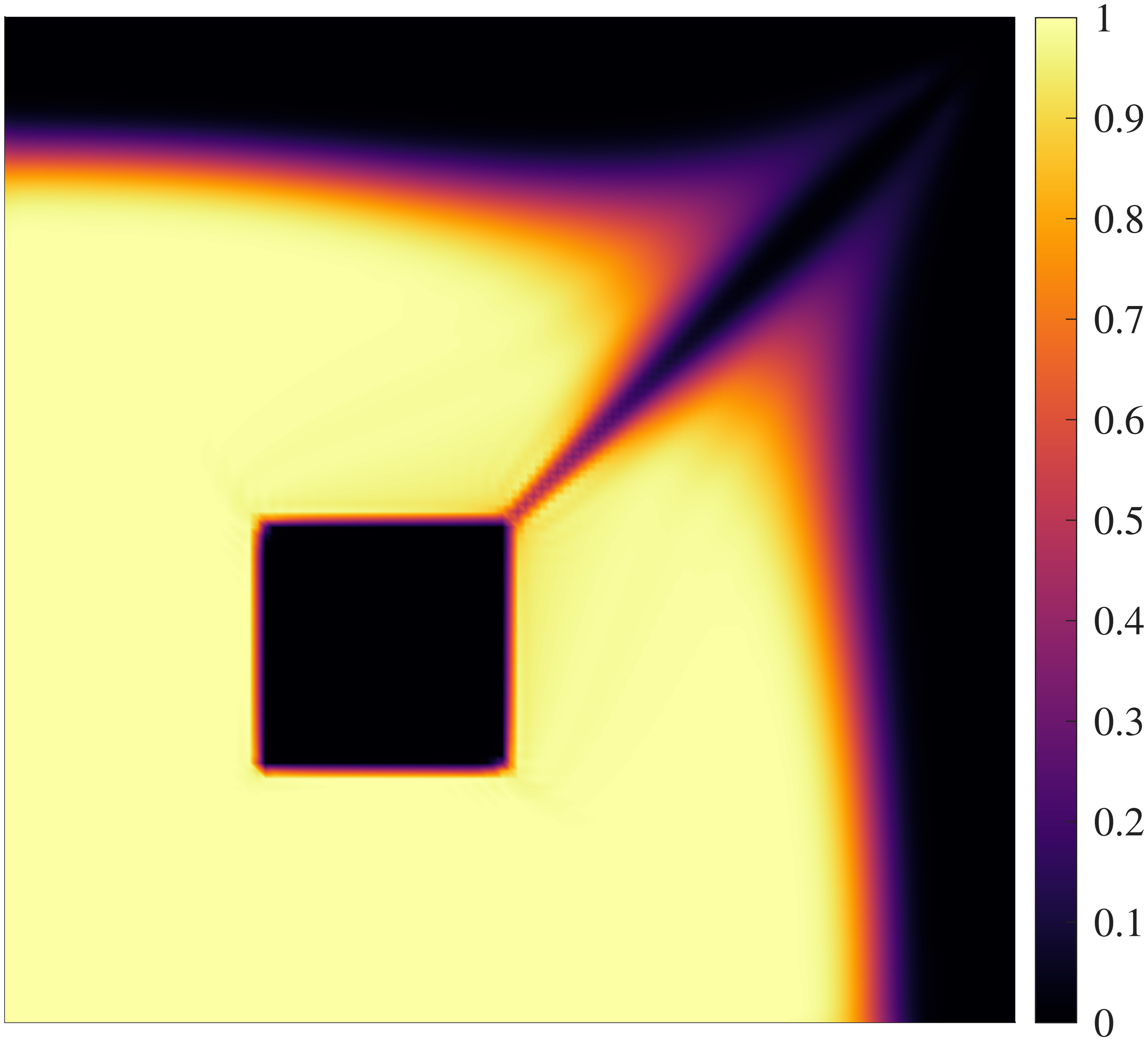}
    \end{subfigure} 
    \caption[]{ 
    Heterogeneous quarter-five spot experiment from section \ref{sec:numerics_lens}.  Plot of the CCG fluid concentration: the first row corresponds to a mesh with 8192 triangles, and the second row corresponds to a mesh with 32768 triangles.}
    \label{fig_sec:numerics_lens1}
\end{figure} 	 

 As time evolves, the fluid concentration moves from the injection well to the production well, however, the region of low permeability acts as a barrier.  The first row of Fig.~\ref{fig_sec:numerics_lens1} uses a mesh with 8192 elements, and the second row are the results from using 32768 elements.  It is clear that as the mesh is refined, the fluid concentration around the lens is captured sharply with little smearing or diffusion (see $t=5.0$ or $t=7.5$).  The CCG scheme is able to generate the expected solution behavior even in the case of heterogeneous permeability.

 \subsection{Heterogeneous porous medium (SPE 10)}  
The CCG method is tested with a realistic, highly heterogeneous permeability field from the 10th SPE comparative solution project.  To better showcase the capabilities of the CCG method, a few 2D slices are extracted from the data set.  The permeability is isotropic in the $x$ and $y$ directions, but can vary several orders of magnitude in a very short distance.  We take $\epsilon=1$, $\sigma_{e,p}=10$, $\sigma_{e,c}=10^{-2}$, and the permeability fields are taken from the SPE 10 data see (see Fig.~\ref{fig:SPE10_layers}). A quarter-law power mixing rule for viscosity is adopted:
 	\[
 	\mu(c) = 
 	(c   \mu_s^{-0.25} + (1-c)   \mu_o^{-0.25})^{-4},\quad  \mu_s = 2.9,~\mu_o = 5.8.
 	\] 
 The remaining parameters are the same as in Section~\ref{sec:numerics_homo}.
\begin{figure}[htbp!]
   \centering
    \begin{subfigure}[t]{0.2\textwidth}
        \centering
        \caption*{Layer 1 (SPE10)}
        \includegraphics[width=\textwidth]{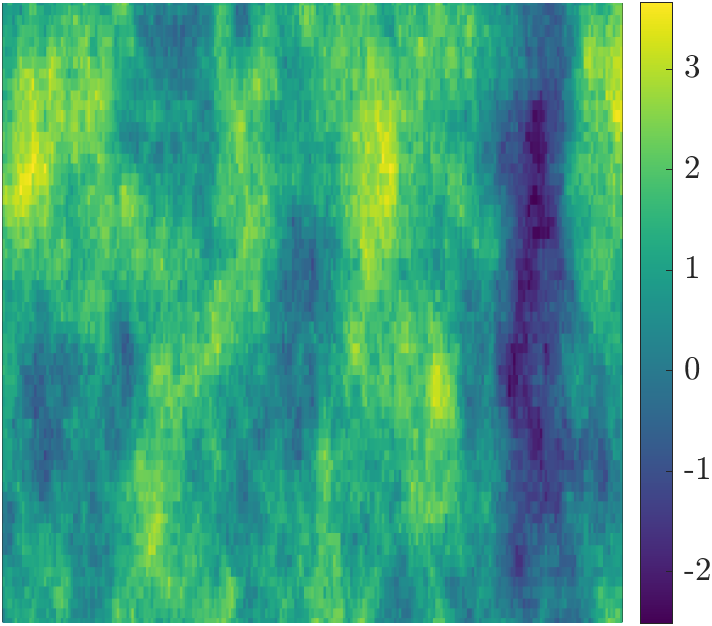} 
    \end{subfigure}
    \begin{subfigure}[t]{0.2\textwidth}
        \centering
        \caption*{Layer 44 (SPE10)}
        \includegraphics[width=\textwidth]{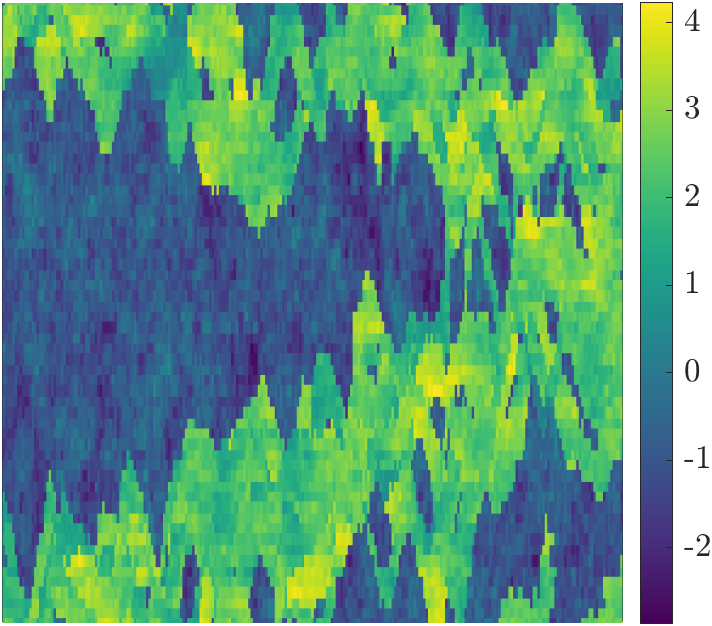} 
    \end{subfigure}
    \begin{subfigure}[t]{0.2\textwidth}
        \centering 
        \caption*{Layer 74 (SPE10)}     
        \includegraphics[width=\textwidth, trim=0cm 0cm 0cm 0cm,clip]{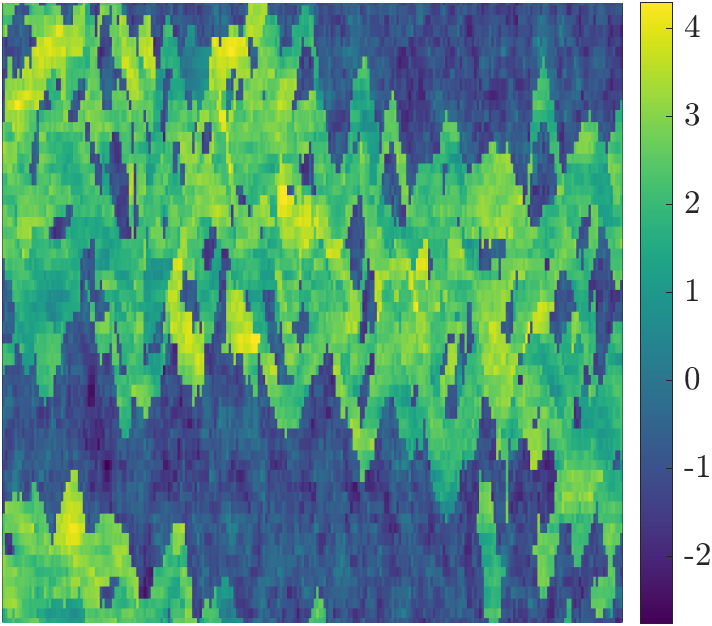} 
    \end{subfigure}
    \caption[]{Permeability layers from the SPE 10 project (base ten logarithmic scale) on a $256 \times 256$ grid.}
    \label{fig:SPE10_layers}
\end{figure}

In Fig.~\ref{fig:SPE10_layer1_CCG}, a comparison between CCG (top row) and high-order IPDG with full polynomial spaces (bottom row) is given. Here, a $(P_1-P_2)-$DG approximation is used (piecewise linears for the concentration, and piecewise quadratics for the pressure).  Both methods are roughly in agreement with the global behavior of the concentration.  However, the IPDG method with full polynomial spaces exhibits spurious oscillations throughout the domain.  These oscillations are nonphysical, but remain localized and bounded. This phenomena is well known, and there are various techniques which can reduce or eliminate these oscillations ($H$(div) flux reconstruction \cite{bastian2003superconvergence,ern2007accurate}, weighted averages \cite{ern2009discontinuous}, limiters/stablizations \cite{berger2005analysis,guermond2011entropy}, and so on). In contrast, the CCG method does not have significant oscillations, but the concentration is slightly more diffusive. A clear advantage of the CCG method is that less post-processing of the Darcy flux and fluid concentration is required.
\begin{figure}[htbp!]
   \centering
    \begin{subfigure}[t]{0.23\textwidth}
        \centering
        \caption*{$t=0.5$ (seconds)}
        \includegraphics[width=\textwidth , trim=0cm 0cm 1.6cm 0cm,clip]{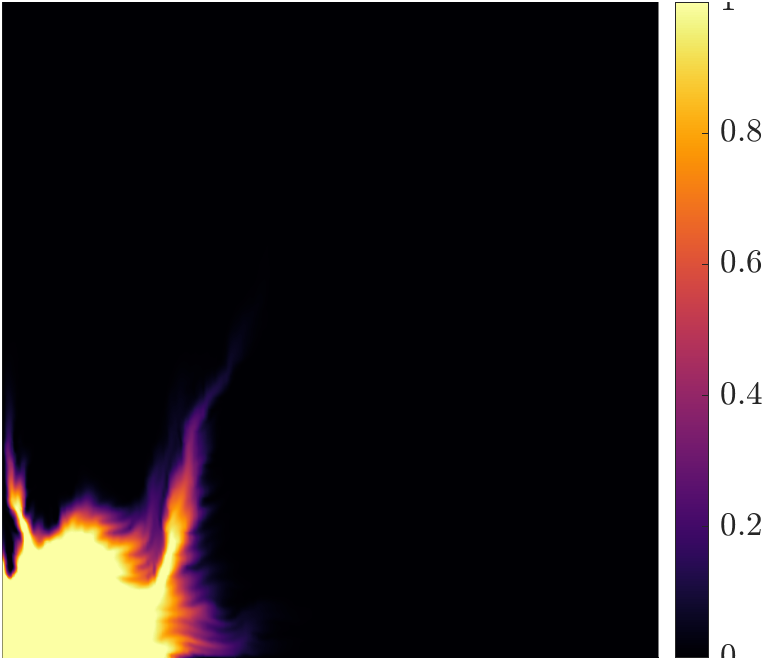} 
    \end{subfigure}
    \begin{subfigure}[t]{0.23\textwidth}
        \centering
        \caption*{$t=1.0$ (seconds)}
        \includegraphics[width=\textwidth , trim=0cm 0cm 1.6cm 0cm,clip]{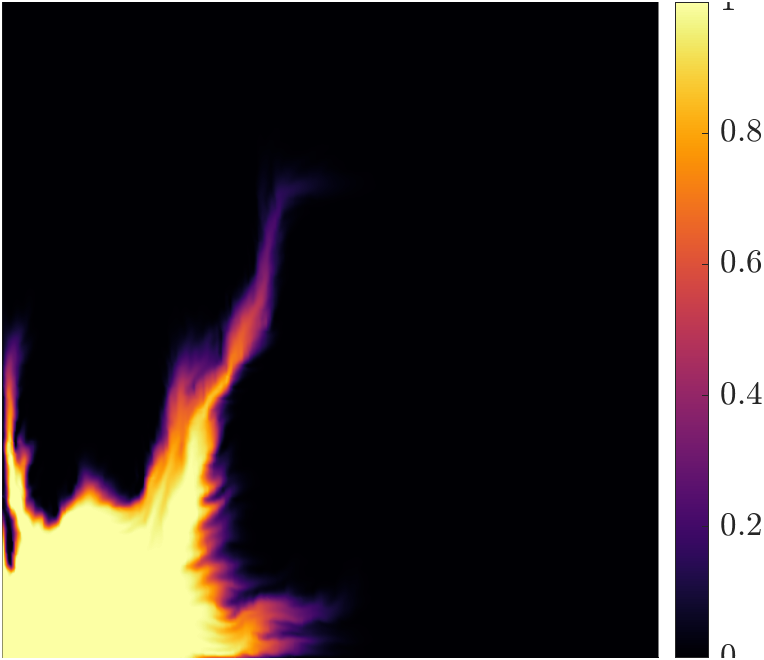} 
    \end{subfigure}
    \begin{subfigure}[t]{0.23\textwidth}
        \centering 
        \caption*{$t=2.5$ (seconds)}     
        \includegraphics[width=1.13\textwidth]{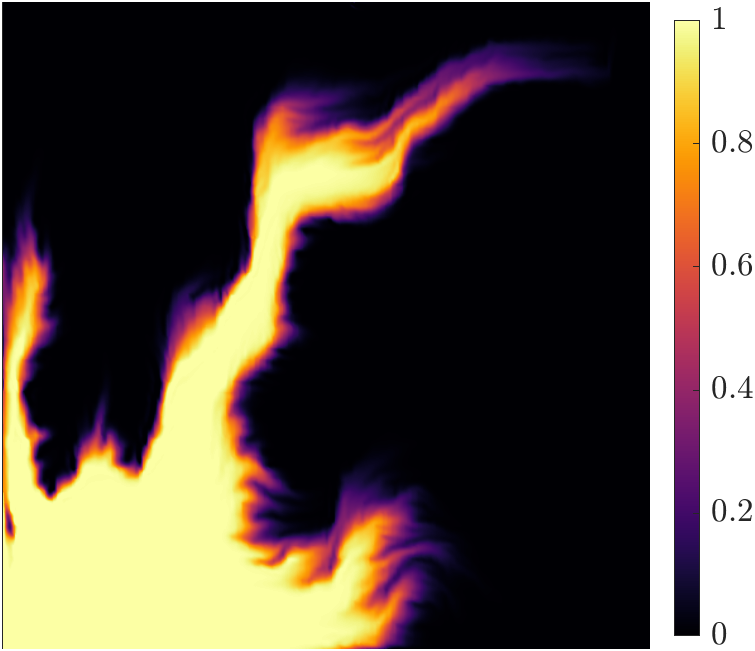} 
    \end{subfigure} 
    
    \vspace{0.5em} 
    \begin{subfigure}[b]{0.23\textwidth}
        \centering 
        \includegraphics[width=\textwidth , trim=0cm 0cm 1.6cm 0cm,clip]{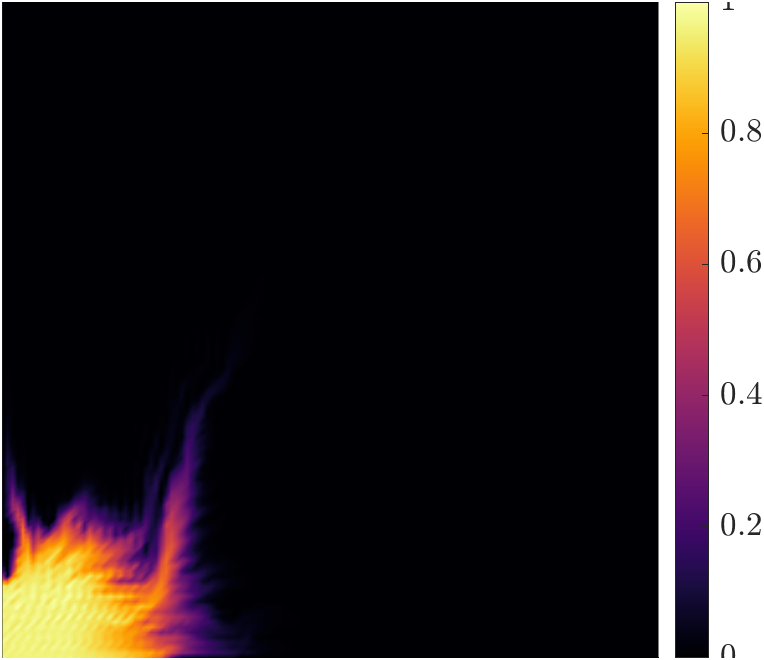}
    \end{subfigure}
    \begin{subfigure}[b]{0.23\textwidth}
        \centering
        \includegraphics[width=\textwidth , trim=0cm 0cm 1.6cm 0cm,clip]{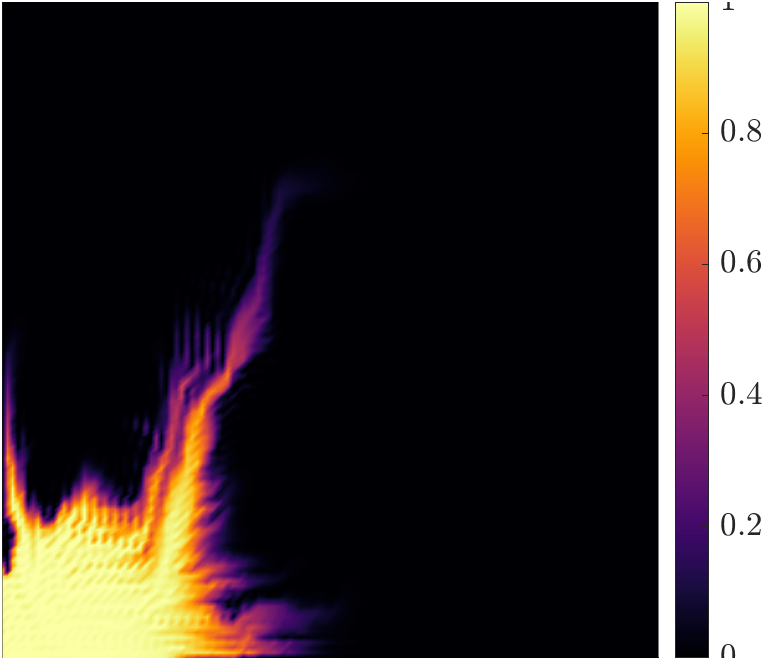}
    \end{subfigure}
    \begin{subfigure}[b]{0.23\textwidth}
        \centering
        \includegraphics[width=1.14\textwidth]{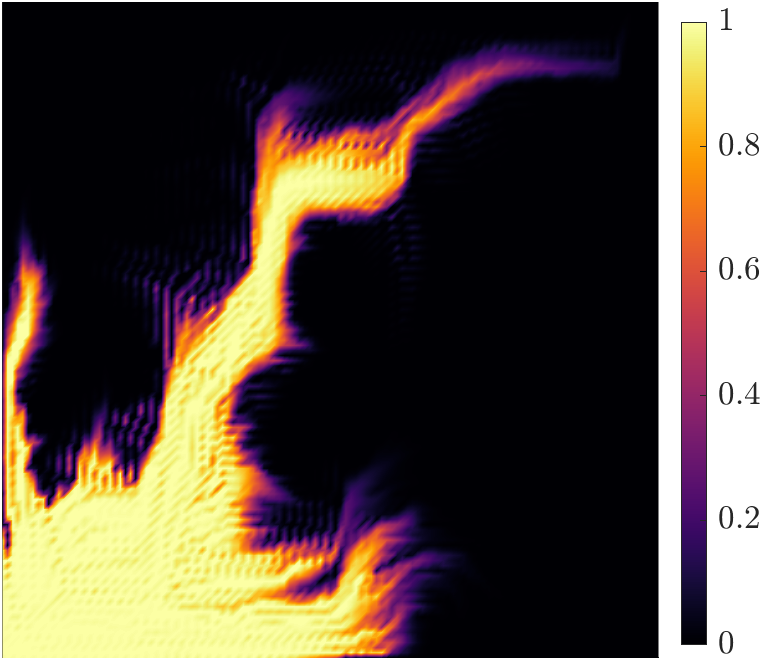}
    \end{subfigure} 
    \caption[]{SPE 10 layer 1.  Concentration snapshots for CCG (top row) and IPDG with full polynomial spaces (bottom row).  The IPDG scheme uses $P_1$ for transport system and $P_2$ for the Darcy system.}
    \label{fig:SPE10_layer1_CCG}
    \end{figure}

\begin{figure}[htbp!]
   \centering
    \begin{subfigure}[t]{0.23\textwidth}
        \centering
        \caption*{$t=0.5$ (seconds)}
        \includegraphics[width=\textwidth , trim=0cm 0cm 1.6cm 0cm,clip]{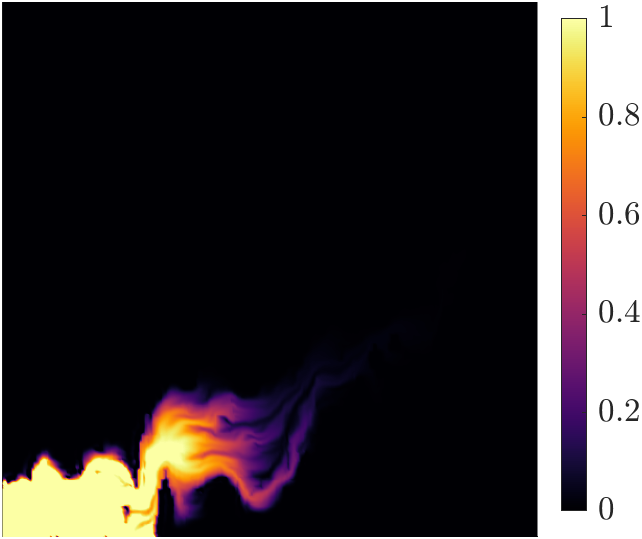} 
    \end{subfigure}
    \begin{subfigure}[t]{0.23\textwidth}
        \centering
        \caption*{$t=1.0$ (seconds)}
        \includegraphics[width=\textwidth , trim=0cm 0cm 1.6cm 0cm,clip]{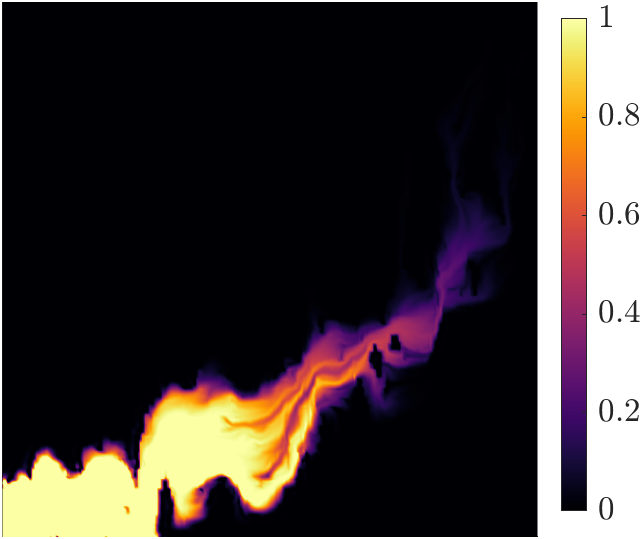} 
    \end{subfigure}
    \begin{subfigure}[t]{0.23\textwidth}
        \centering 
        \caption*{$t=2.5$ (seconds)}     
        \includegraphics[width=1.17\textwidth]{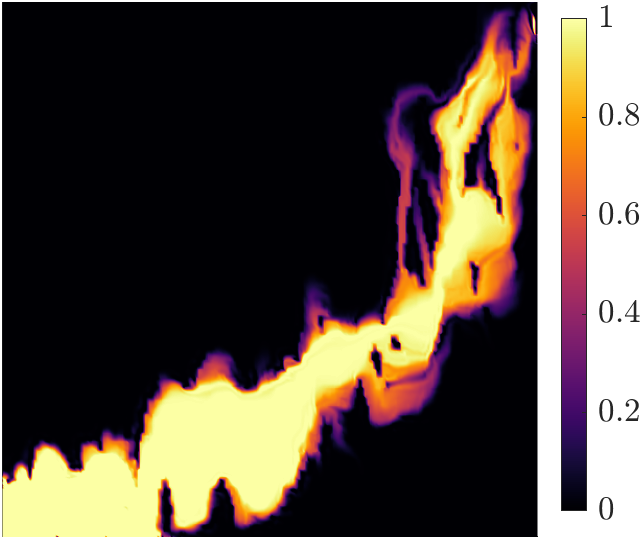} 
    \end{subfigure} 
    
    \vspace{0.5em} 
    \begin{subfigure}[b]{0.23\textwidth}
        \centering 
        \includegraphics[width=\textwidth , trim=0cm 0cm 1.6cm 0cm,clip]{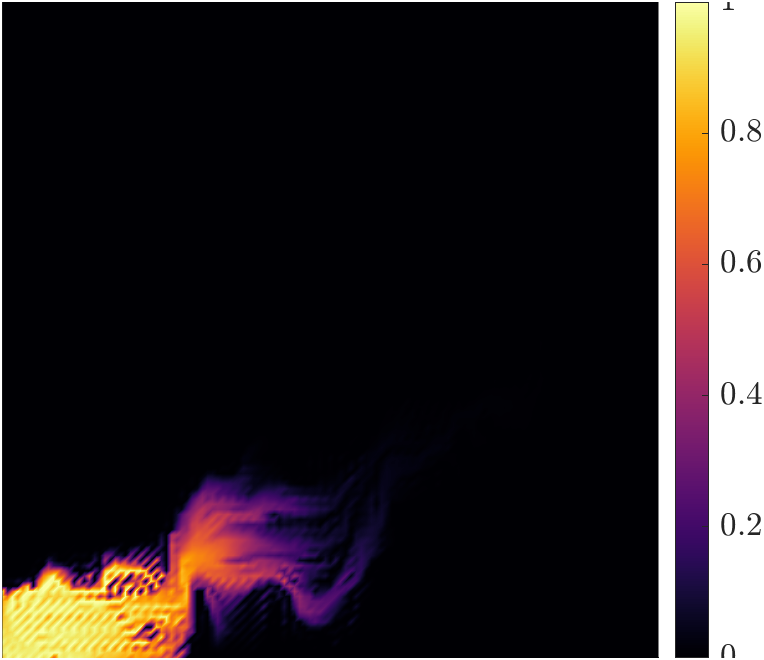}
    \end{subfigure}
    \begin{subfigure}[b]{0.23\textwidth}
        \centering
        \includegraphics[width=\textwidth , trim=0cm 0cm 1.6cm 0cm,clip]{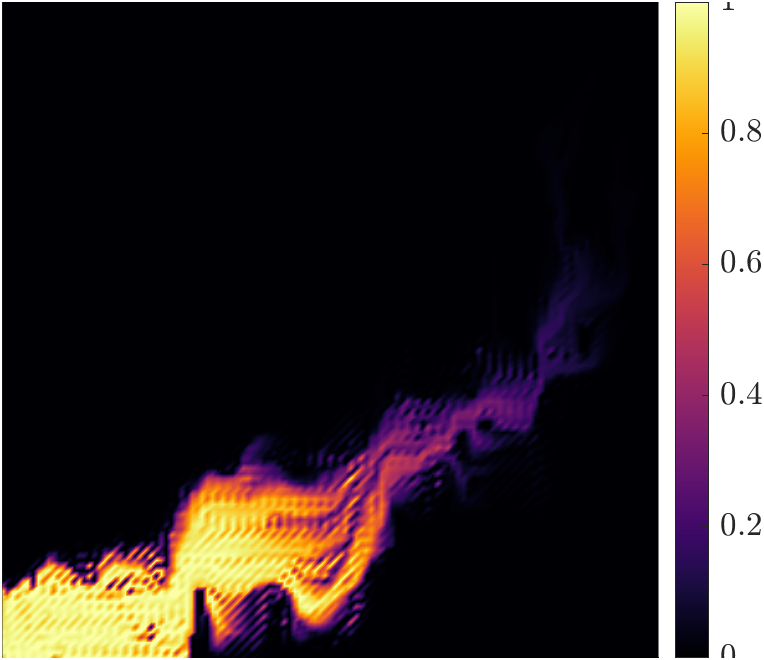}
    \end{subfigure}
    \begin{subfigure}[b]{0.23\textwidth}
        \centering
        \includegraphics[width=1.14\textwidth]{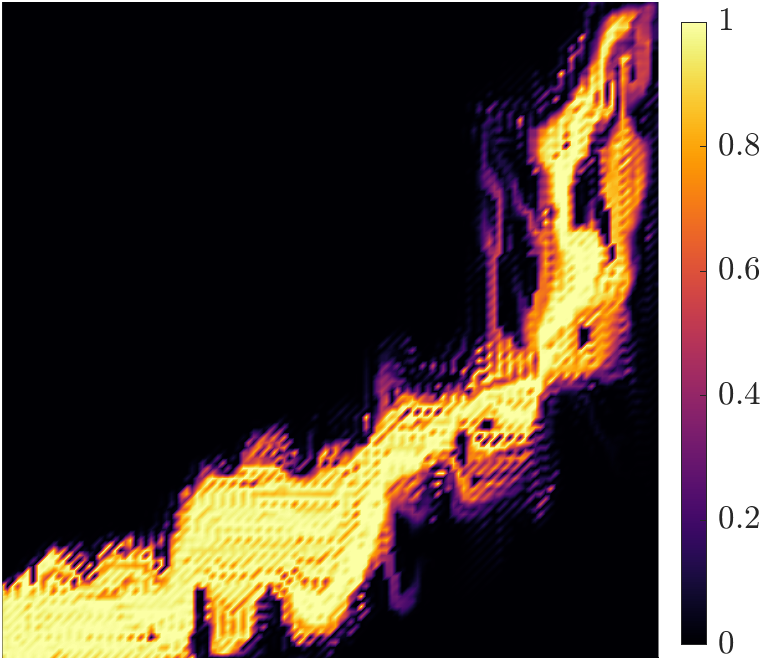}
    \end{subfigure} 
    \caption[]{SPE 10 layer 44.  Concentration snapshots for CCG (top row) and IPDG with full polynomial spaces (bottom row).    The IPDG scheme uses $P_1$ for transport system and $P_2$ for the Darcy system.}
    \label{fig:SPE10_layer44_CCG}
    \end{figure} 
\begin{figure}[htbp!]
   \centering 
    \begin{subfigure}[t]{0.23\textwidth}
        \centering
        \caption*{$t=0.5$ (seconds)}
        \includegraphics[width=\textwidth , trim=0cm 0cm 1.6cm 0cm,clip]{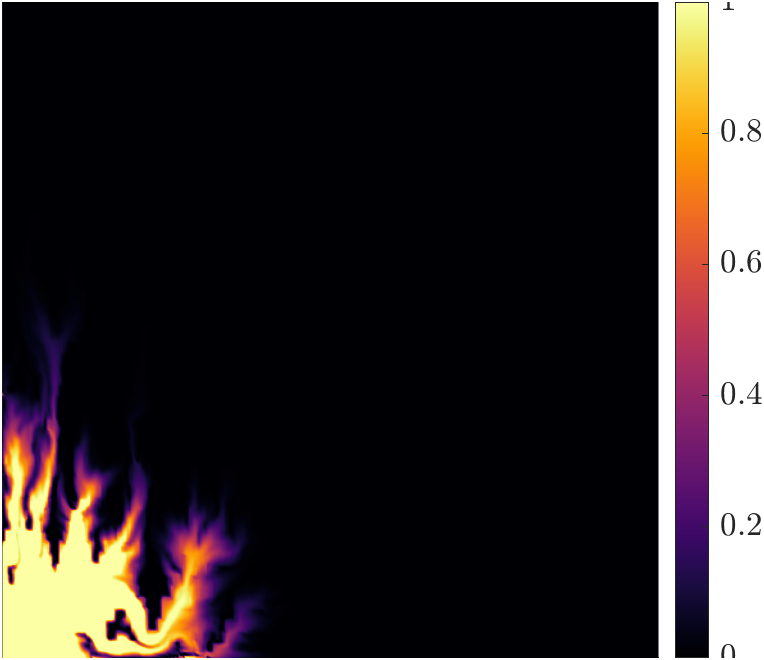} 
    \end{subfigure}
    \begin{subfigure}[t]{0.23\textwidth}
        \centering
        \caption*{$t=1.0$ (seconds)}
        \includegraphics[width=\textwidth , trim=0cm 0cm 1.6cm 0cm,clip]{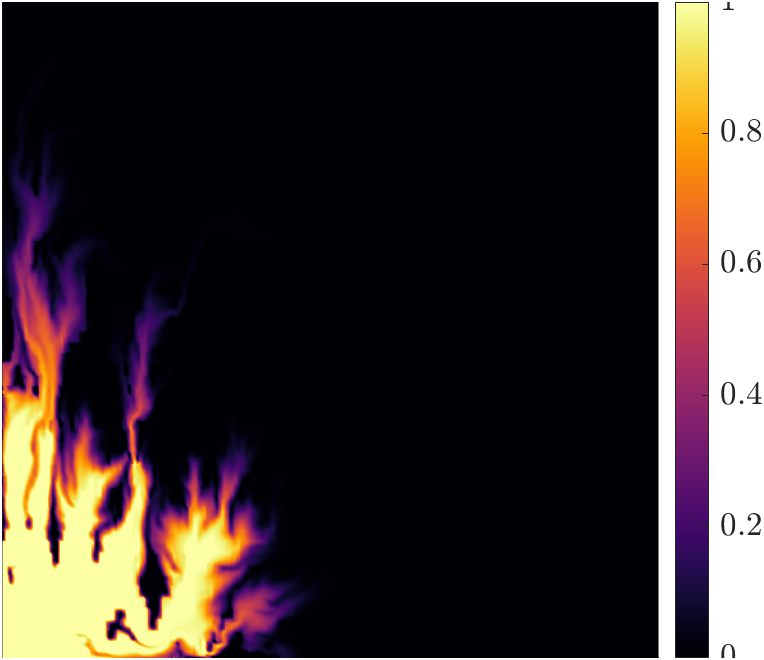} 
    \end{subfigure}
    \begin{subfigure}[t]{0.23\textwidth}
        \centering 
        \caption*{$t=2.5$ (seconds)}   
        \includegraphics[width=1.13\textwidth]{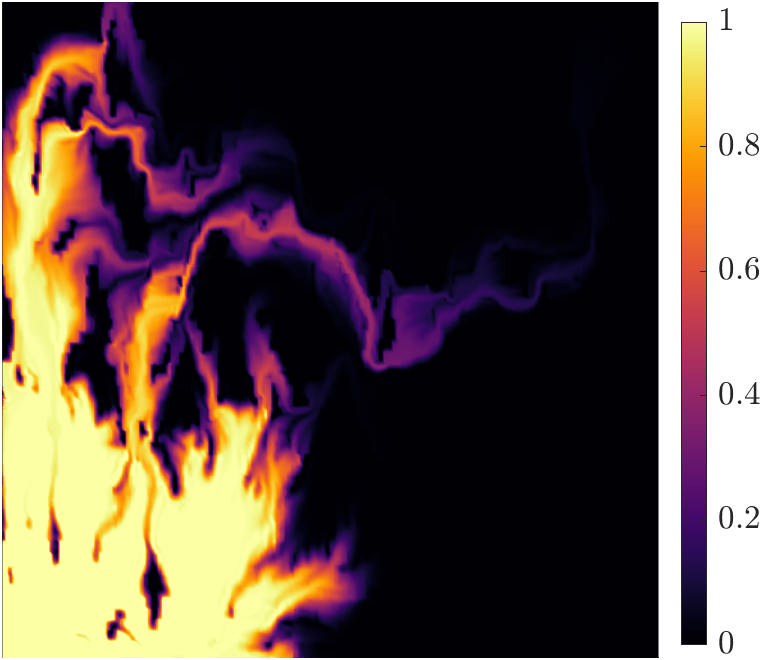} 
    \end{subfigure}    
    
    \vspace{0.5em} 
    \begin{subfigure}[b]{0.23\textwidth}
        \centering 
        \includegraphics[width=\textwidth , trim=0cm 0cm 1.6cm 0cm,clip]{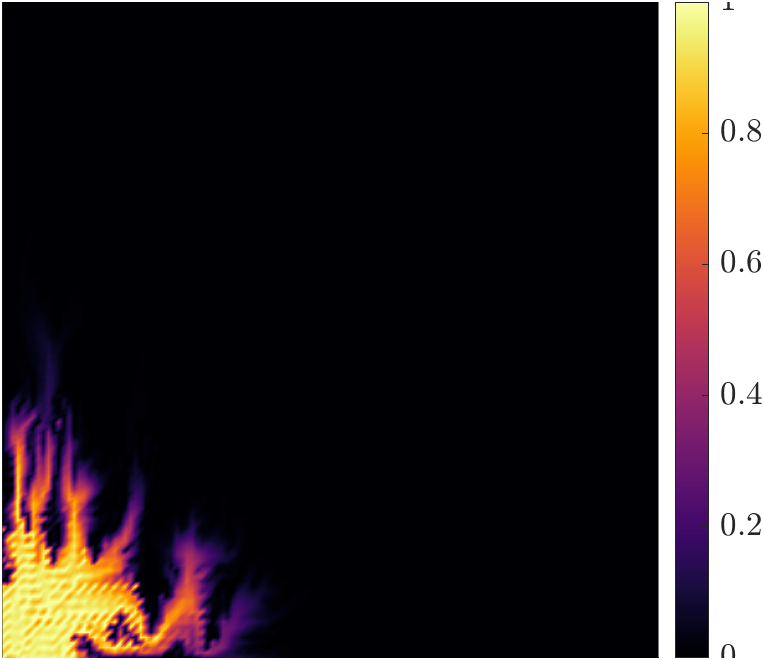}
    \end{subfigure}
    \begin{subfigure}[b]{0.23\textwidth}
        \centering
        \includegraphics[width=\textwidth , trim=0cm 0cm 1.6cm 0cm,clip]{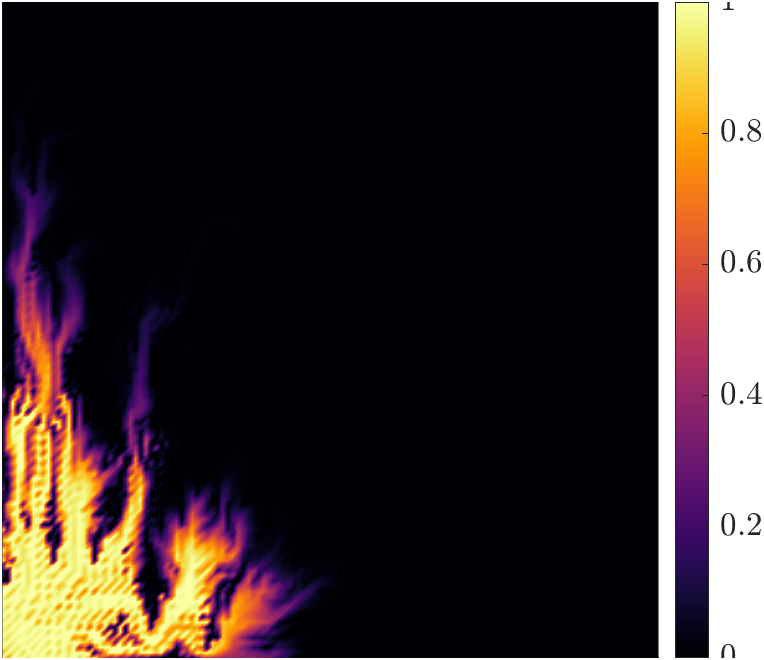}
    \end{subfigure}
    \begin{subfigure}[b]{0.23\textwidth}
        \centering
        \includegraphics[width=1.14\textwidth]{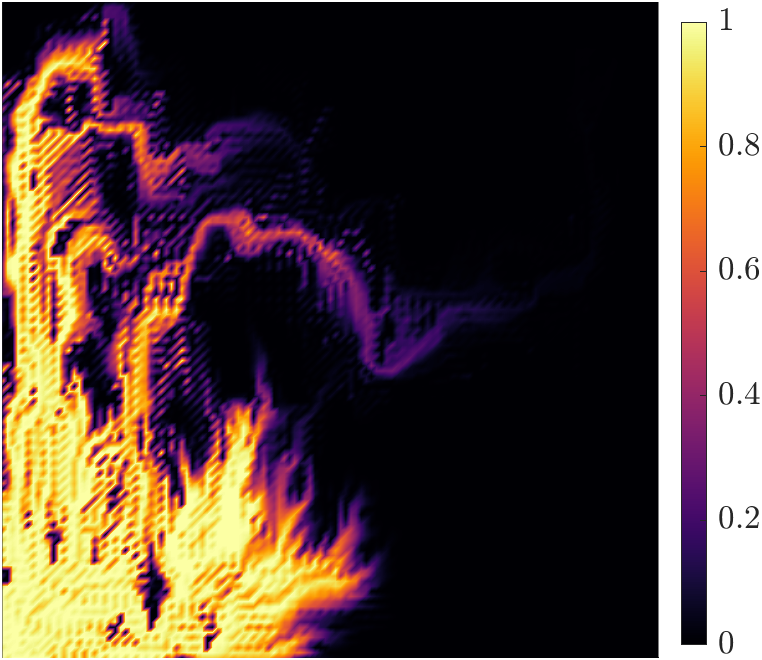}
    \end{subfigure} 
    \caption[]{SPE 10 layer 74.  Concentration snapshots for CCG (top row) and IPDG with full polynomial spaces (bottom row).    The IPDG scheme uses $P_1$ for transport system and $P_2$ for the Darcy system.}
    \label{fig:SPE10_layer74_CCG}
    \end{figure}  
For more the challenging permeability layers (layer 44 and layer 74), the nonphysical oscillations are more noticeable in the IPDG method.  We again mention that various postprocessing techniques can be applied to eliminate these undesirable effects from the DG solution. Overall, both methods capture the general behavior of the fluid concentration: it moves from the injection region to the production region, and must navigate around regions of low permeability.  As the CCG method does not use complete polynomial spaces, the fluid concentration is slightly more diffusive than that of the IPDG method with full polynomial spaces.    
 \subsection{3D numerical experiments}   
	Several 3D experiments are conducted in this section, including comparisons to $P_1$-DG. We revisit the permeability lens problem from Section~\ref{sec:numerics_lens}.  The domain is now $\Omega=[0,1]^3$, and the computational domain is a tetrahedral meshing of $\Omega$.   For injection and production, we put $q^I$ piecewise constant on $[0,0.1]^3$ and zero elsewhere, and $q^P$ piecewise constant on $[0.9,1]^3$ and zero elsewhere. The injection and production rates satisfy
 	\[
 	\int_\Omega q^I = 	\int_\Omega q^P = 0.018~ \frac{m^3}{s}.
 	\] 
\subsection{3D Homogeneous porous medium}  	
We consider the 3D analogue of Section~\ref{sec:numerics_homo}, adopting the same parameter values.  Snapshots of the concentration are provided in Fig.~\ref{fig:lens_3D_0}.  In a homogeneous porous medium, transport is not hindered by heterogeneity and the concentration front advances smoothly. The CCG numerical approximation reproduces this behavior.
	\begin{figure}[htbp!]
	\centering  
    \begin{subfigure}[t]{0.30\textwidth}
        \centering 
        	 \caption{$t=2.5$ seconds}
        	 \vspace*{-0.5ex}
        \includegraphics[width=\textwidth , trim=5cm 0cm 5.8cm 2cm,clip]{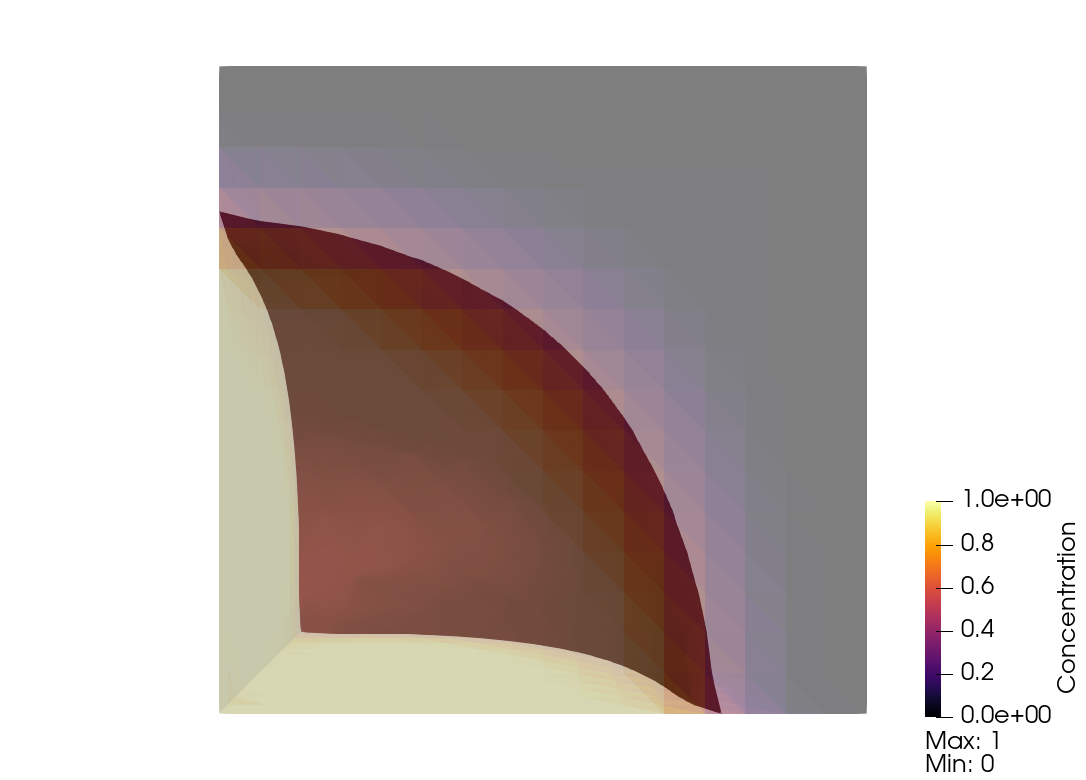}
    \end{subfigure}
    \begin{subfigure}[t]{0.29\textwidth}
        \centering 
        \caption{$t=5.0$ seconds}
        \includegraphics[width=\textwidth , trim=5cm 0cm 5.8cm 2cm,clip]{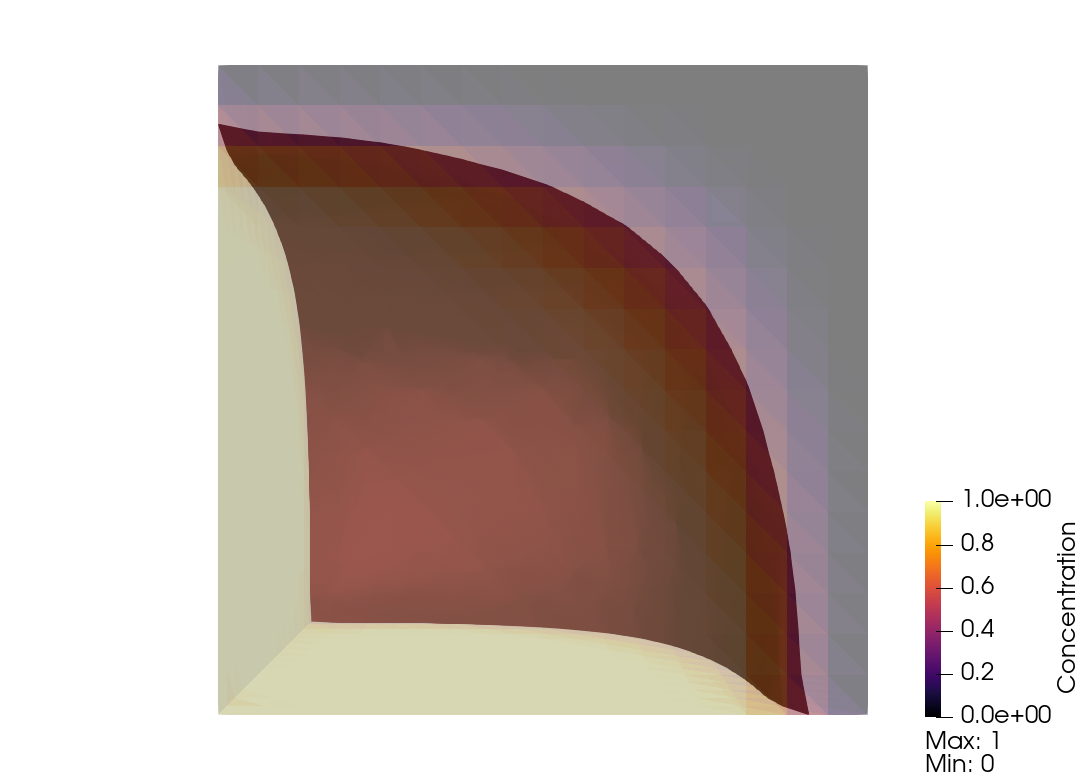}
    \end{subfigure}
    \begin{subfigure}[t]{0.35\textwidth} 
        \centering
        \caption{$t=7.5$ seconds}
        \includegraphics[width=\textwidth , trim=5cm 0cm 0cm 2cm,clip]{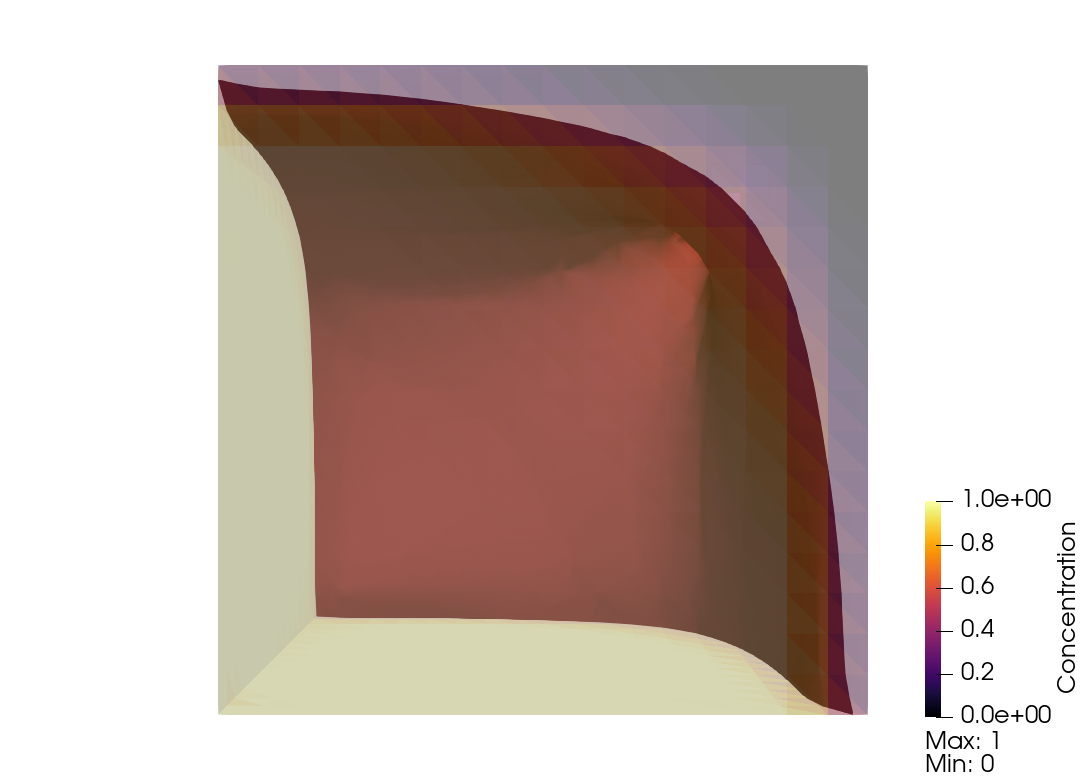}
    \end{subfigure} 
	\caption{Contour plot of the CCG solution $c_h(\vec{x},t)=0.5$ at various times (homogeneous permeability).}
	\label{fig:lens_3D_0}
	\end{figure}

\subsection{3D Heterogeneous porous medium (permeability lens)} \label{sec:3D_lens}	 	
	  On an element $  E \in \mathcal{T}_h$, the permeability is set as
\[
{\bf K}_E = \kappa_E {\bf I}_{3\times 3},
\quad
\kappa_E = 
\begin{cases}
9.44 \cdot 10^{-6}, &\mbox{ if }     (x,y,z)\in [0.25,0.5]\times[0.25,0.5]\times[0,1],
\\
9.44 \cdot 10^{-3}, &\mbox{ otherwise},
\end{cases} .
\]	  
All other parameters are the same as in section~\ref{sec:numerics_lens}. A mesh with 196608 tetrahedral elements is used. Fig.~\ref{fig:lens_3D_1} visualizes the CCG concentration contour ($c_h\equiv 0.5$) at various times.
	\begin{figure}[htbp!]
	\centering
    \begin{subfigure}[t]{0.30\textwidth}
        \centering 
        	 \caption{$t=2.5$ seconds}
        	 \vspace*{-0.5ex}
        \includegraphics[width=\textwidth , trim=5cm 0cm 5.8cm 2cm,clip]{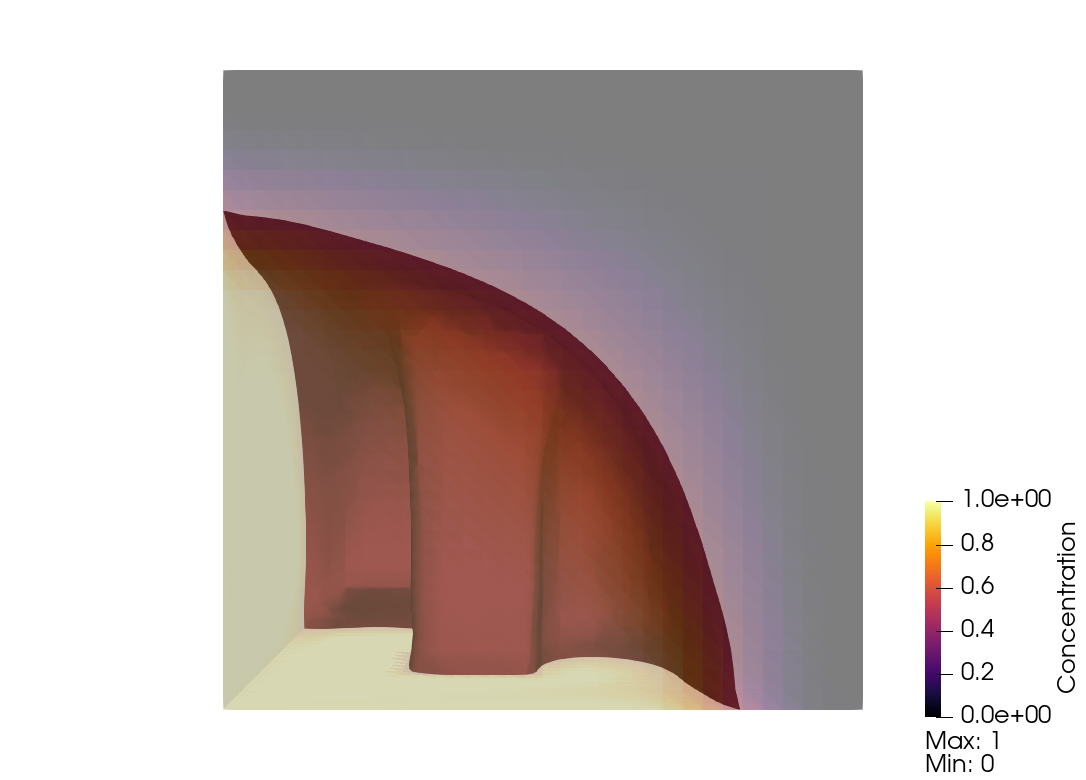}
    \end{subfigure}
    \begin{subfigure}[t]{0.29\textwidth}
        \centering 
        \caption{$t=5.0$ seconds}
        \includegraphics[width=\textwidth , trim=5cm 0cm 5.8cm 2cm,clip]{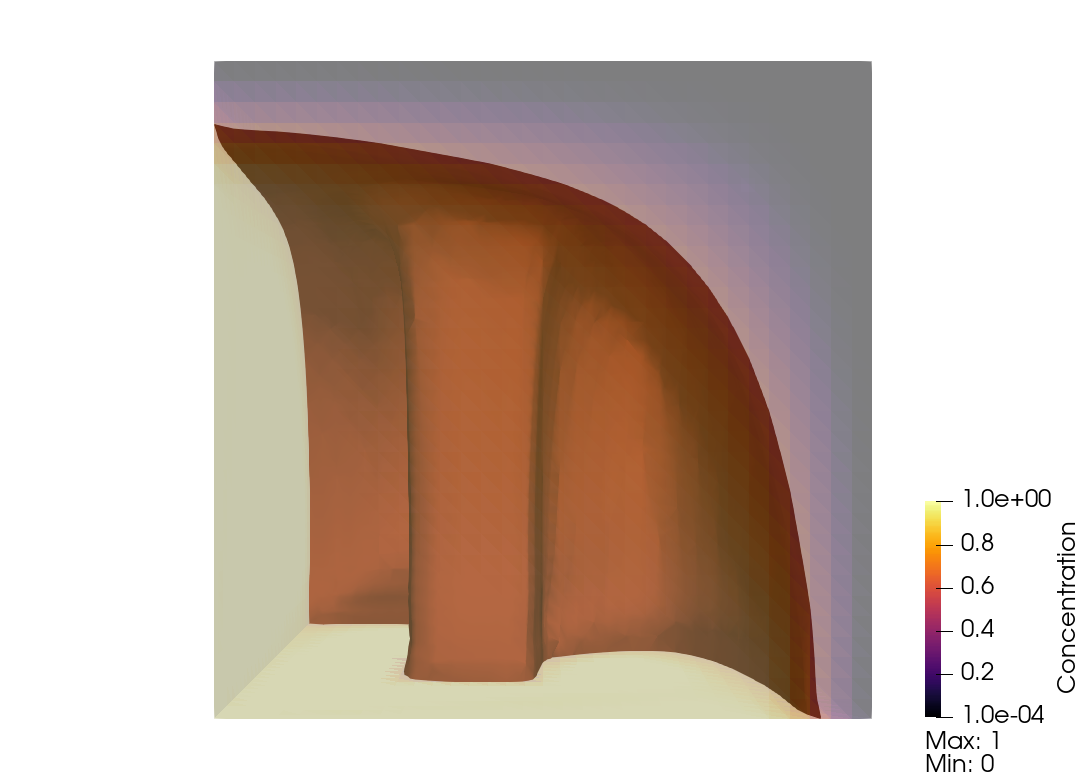}
    \end{subfigure}
    \begin{subfigure}[t]{0.35\textwidth} 
        \centering
        \caption{$t=7.5$ seconds}
        \includegraphics[width=\textwidth , trim=5cm 0cm 0cm 2cm,clip]{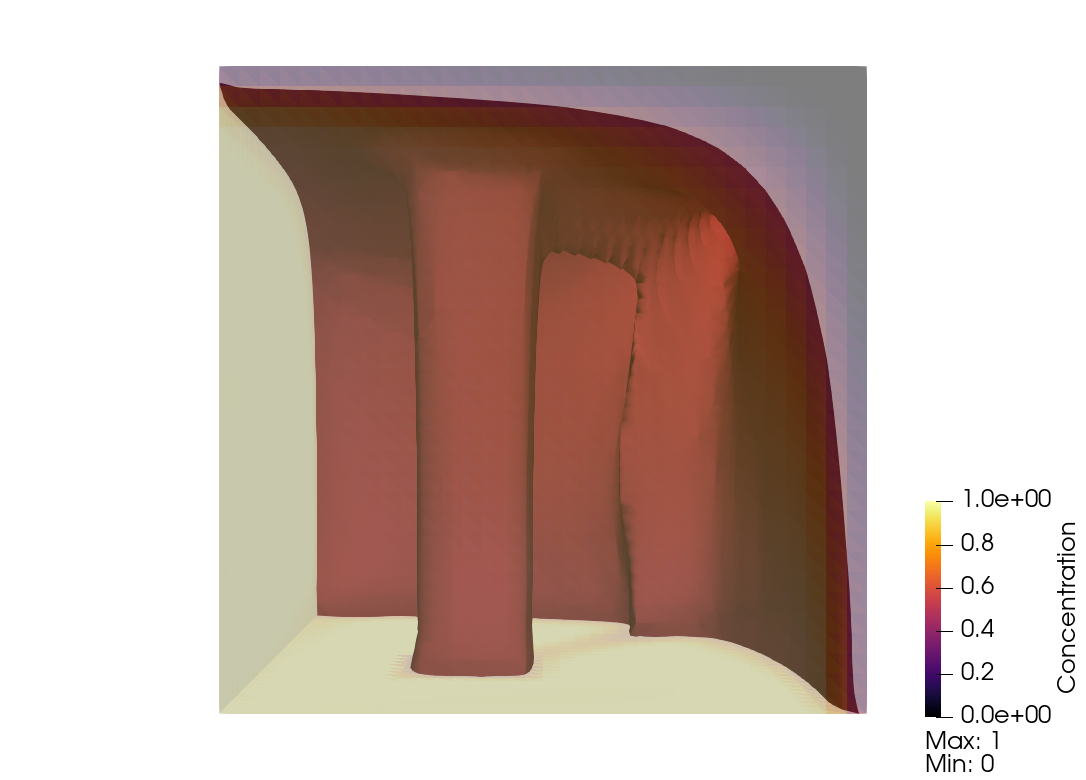}
    \end{subfigure} 
	\caption{Contour plot of the CCG solution $c_h(\vec{x},t)=0.5$ at various times (permeability lens).}
	\label{fig:lens_3D_1}
	\end{figure}
 The CCG method yields the expected results in 3D, with heterogeneous permeability: the concentration travels from the injection site, navigating around the region of lower permeability.
\subsection{Computational timings}  
	Section \ref{sec:computational_cost1} compared the nonzero counts of the CCG and $P_1$-DG discretization matrices. Here we assess a complementary metric: time-to-solution. We report wall-clock times for the CCG and $P_1$-DG methods.  We focus on the experiment from Section~\ref{sec:3D_lens}.  For simplicity, we pick a direct solver (MUMPS \cite{amestoy2001fully}).  Even in the absence of good scaling, the CCG method delivers lower runtimes with stock solvers.  Runtimes are evaluated across successive meshes; results are reported as average time per time step.  Timings were collected on an Intel Core i9‑9900K (3.60 GHz) system with 128 GB DDR4‑2400; runs were single‑threaded.
 
	Table~\ref{tab:timings-xy-speedup} contains the results for the direct solver. Despite the CCG matrix having about 25\% fewer nonzeros in the discretization matrix compared to $P_1$-DG (see Fig.~\ref{fig:ccg_vs_dg_cost_percent}), the CCG method achieves a 1.49$\times$ speedup. Put in other words, the CCG method has a 33\% lower runtime.  
\begin{table}[htbp!]
\centering
\caption{Wall-clock time (seconds) for CCG and $P_1$-DG methods, with speedup of CCG over $P_1$-DG and runtime reduction (in favor of CCG).}
\label{tab:timings-xy-speedup}
\begin{tabular}{l
				l
				l
				c
                S[table-format=2.1]}
\toprule
{$|\mathcal{T}_h|$} & {$P_1$-DG} & {CCG} & {Speedup} & {Runtime reduction [\%]} \\
\midrule
196608 & 4.249$\cdot 10^{+2}$   & 2.846$\cdot 10^{+2}$ & 1.490 & 33.0 \\ 
24576  & 3.977$\cdot 10^{+0}$    & 2.743$\cdot 10^{+0}$      & 1.450 & 31.0 \\
3072   & 9.990$\cdot 10^{-2}$ & 5.980$\cdot 10^{-2}$     & 1.657 & 39.6 \\
\bottomrule
\end{tabular}
\end{table}	

\section{Conclusion}\label{sec:conclusions}
	This paper presents the cell centered Galerkin method for miscible displacement problems in porous media.  The CCG method leverages incomplete polynomial spaces, trace and gradient reconstructions to devise an efficient numerical scheme.  Benefits of the CCG scheme include: one unknown per cell, it can use weak formulations from preexisting DG methods, local mass conservation, and delivers competitive accuracy when compared to $P_1$-DG approximations. Extensive numerical experiments substantiate the effectiveness of the proposed method.  In addition, comparisons between the CCG and $P_k$-DG approximations are provided.  It is found that the CCG method has considerable computational savings (about 25\% less expensive), and, the $P_k$-DG solutions require considerable post-processings.
 
	There are several future directions to explore. Comparing alternative trace and gradient reconstruction operators is needed.  The CCG method with barycentric trace interpolation gives rise to a discretization matrix with a wide stencil.  Also, theoretically speaking, a local loss of accuracy is generally possible because the barycentric trace interpolation does not take the heterogeneous permeability coefficients into account during reconstruction.
 
   Another promising direction is $hp$-adaptive CCG-DG strategies in which some cells use the CCG reconstruction, and other cells utilize the full polynomial DG spaces. The advantage over multi-numerics approaches is that the approximation falls under the same theoretical finite element variational framework. For instance, with CCFV-DG multi-numeric schemes, the interface between methods can be delicate to define, and consistency errors are introduced.  Moreover, many scientific computing softwares specialize in one type of numerical method, which can cause difficulty for multi-numeric schemes.
   
   Finally, we also plan to explore maximum-principle properties for the CCG method in higher spatial dimensions. High-order numerical methods are notorious for violating maximum-principle properties, which are intrinsic to many PDEs.  However, numerical evidence suggest that the CCG method for the Poisson problem obeys a discrete maximum-principle property.

\section*{Acknowledgements} \label{}
The author acknowledges support from the MIT Schwarzman College of Computing and the MIT MLK Professorship.

 \section{Appendix}\label{sec:app}
 We consider the CCG discretization for the 1D model problem:
 \[
 -u''(x) = f(x),\quad u(a)=u(b)=0.
 \]
The domain $\Omega=[a,b]$ is partitioned into $N$ elements $E_\ell=( x_{\ell}, x_{\ell+1})$, for $\ell\in\{0,1,2,\ldots,N-1\}$ with $x_0=a<x_1<\ldots <x_{N-1}<x_N=b$. The cell center is given by $x_{E_\ell} = (x_{\ell}+ x_{\ell+1})/2$. To make the exposition more clear, we assume the mesh is uniform with $h= |x_{\ell} - x_{\ell+1}|$.

The standard IPDG bilinear form is used: we seek $u_h\in V_h^{\textrm{ccg}}$ so that
\begin{equation}  
a_\epsilon(u_h,v_h) = \sum_{\ell=0}^{N-1} \int_{E_\ell} u_h' v_h' + \sum_{\ell=0}^{N } (-\{u_h'\}_{x_{\ell}}[v_h]_{x_{\ell}} +\epsilon  \{v_h'\}_{x_{\ell}}[u_h]_{x_{\ell}} + \frac{\sigma}{h}[u_h]_{x_{\ell}}[v_h]_{x_{\ell}}),
\label{eq:app1}
\end{equation}
for all $v_h \in V_h^{\textrm{ccg}}$ (see equation \eqref{def_ccg_space}). The space $ V_h^{\textrm{ccg}} $ is explored in more detail next.

\subsection{Barycentric trace reconstruction}
We examine the barycentric trace reconstruction (see Definition \ref{eq_bary}).	For an interior face \(F = x_k,\, 1 \leq k \leq N-1\), the two neighboring cell centers are \(m_{k-1},\, m_k\). The barycentric coordinates at \(x_k\) in the interval \([m_{k-1}, m_k]\) are:
\[
\lambda_{k-1,F}(x_k) = \frac{m_k - x_k}{m_k - m_{k-1}},
\qquad
\lambda_{k,F}(x_k) = \frac{x_k - m_{k-1}}{m_k - m_{k-1}}.
\]
With
\[
m_{k-1} = x_{k-1} + \frac{h}{2}, \quad m_k = x_k + \frac{h}{2}, \quad x_k = x_{k-1} + h,
\]
it follows that
\[
x_k - m_{k-1} = h/2, \qquad
m_k - x_k = h/2, \qquad
m_k - m_{k-1} = h.
\]
Therefore,
\[
\lambda_{k-1,F}(x_k) = \frac{h/2}{h} = \frac{1}{2}, \qquad \lambda_{k,F}(x_k) = \frac{h/2}{h} = \frac{1}{2}.
\]
For a face \(F = x_k\), the barycentric trace reconstruction $I_F$ is then given by
\[
I_{x_k}(v_h) =
\begin{cases}
\lambda_{k-1,F}\, v_{k-1} + \lambda_{k,F}\, v_k = \frac{1}{2}(v_{k-1} + v_k), & 1 \leq k \leq N-1 \\[2ex]
0, & k=0\ \text{or}\ k=N\quad (\text{boundary face}).
\end{cases}
\]
\subsection*{Discrete gradient operator}
	With access to the barycentric trace reconstruction $I_F$, we can construct the discrete gradient operator (see Definition \ref{sec:prelim_def2}) for \(v_h = \phi_j\) evaluated in cell \(T_\ell\) is
\[
\begin{aligned}
G_h^{(0)}(\phi_j)\big|_{T_\ell}
&= \frac{1}{h} \left( I_{x_{\ell+1}}(\phi_j) - I_{x_\ell}(\phi_j) \right) \\[2ex]
&= \frac{1}{h} \left[
    \frac{1}{2}(\delta_{j,\ell} + \delta_{j,\ell+1})
    - \frac{1}{2}(\delta_{j,\ell-1} + \delta_{j,\ell})
  \right] \\[2ex]
&= \frac{1}{2h} \left( \delta_{j,\ell+1} - \delta_{j,\ell-1} \right),
\end{aligned}
\]
where \(\delta_{a,b}\) is the Kronecker delta. Explicitly,
\[
G_h^{(0)}(\phi_j)\big|_{T_\ell} =
\begin{cases}
~+\dfrac{1}{2h}, & \ell = j-1 \\[2ex]
~-\dfrac{1}{2h}, & \ell = j+1 \\[2ex]
~0, & \text{otherwise}.
\end{cases}
\]
The reconstructed affine function in each cell is:
\[
\mathcal{A}_h^{(1)}(\phi_j)\big|_{T_\ell}(x) = v_{T_\ell} + G_h^{(0)}(\phi_j)\big|_{T_\ell}\cdot (x - x_{T_\ell}).
\]
The mapped functions $\mathcal{A}_h^{(1)}(\phi_j)$ on cell $T_j$ are summarized below.
\subsection*{For $j=0$ (leftmost basis function):}
\[
\mathcal{A}_h^{(1)}(\phi_0)(x) =
\begin{cases}
1 + \dfrac{1}{2 h}(x - x_{T_0}), & x \in T_0, \\[2ex]
-\dfrac{1}{2 h}(x - x_{T_1}), & x \in T_1, \\[2ex]
0, & \text{otherwise}.
\end{cases}
\]

\subsection*{For $j=N-1$ (rightmost basis function):}
\[
\mathcal{A}_h^{(1)}(\phi_{N-1})(x) =
\begin{cases}
\dfrac{1}{2 h}(x - x_{T_{N-2}}), & x \in T_{N-2}, \\[2ex]
1 - \dfrac{1}{2 h}(x - x_{T_{N-1}}), & x \in T_{N-1}, \\[2ex]
0, & \text{otherwise}.
\end{cases}
\]

\subsection*{For $1 \leq j \leq N-2$ (interior case):}
\[
\mathcal{A}_h^{(1)}(\phi_j)(x) =
\begin{cases}
\dfrac{1}{2 h}(x - x_{T_{j-1}}), & x \in T_{j-1}, \\[2ex]
1, & x \in T_j \\[2ex]
-\dfrac{1}{2 h}(x - x_{T_{j+1}}), & x \in T_{j+1},\\[2ex]
0, & \text{otherwise}.
\end{cases}
\] 

Using the mapped functions $\mathcal{A}_h^{(1)}(\phi_j)$, and the IPDG bilinear form \eqref{eq:app1}, the CCG discretization with barycentric gradient reconstruction gives rise to a 7-diagonal matrix: 
\begin{equation}
T_{\epsilon,\sigma} := \frac{1}{h}
\left[
\begin{array}{cccccccccc}
a_0   & b_0   & c_0   & d     &        &        &        &        &        &        \\
b_0   & a_1   & b     & c     & d      &        &        &        &        &        \\
c_0   & b     & a     & b     & c      & d      &        &        &        &        \\
d     & c     & b     & a     & b      & c      & d      &        &        &        \\
      & d     & c     & b     & a      & b      & c      & d      &        &        \\
      &       & \ddots& \ddots& \ddots & \ddots & \ddots & \ddots & \ddots &        \\
      &       &       & d     & c      & b      & a      & b      & c      & d      \\
      &       &       &       & d      & c      & b      & a      & b      & c_0    \\
      &       &       &       &        & d      & c      & b      & a_1    & b_0    \\
      &       &       &       &        &        & d      & c_0    & b_0    & a_0    \\
\end{array}
\right],
\label{eq:stiffness1D}
\end{equation}
where
\begin{equation}
\begin{aligned}
a   &= \dfrac{5\sigma}{4} - \dfrac{\epsilon}{4} + \dfrac{3}{4}, &
b   &= \dfrac{\epsilon}{16} - \dfrac{15\sigma}{16} - \dfrac{1}{16}, &
c   &= \dfrac{\epsilon}{8} + \dfrac{3\sigma}{8} - \dfrac{3}{8}, &
d   &= \dfrac{1}{16} - \dfrac{\sigma}{16} - \dfrac{\epsilon}{16}, \\[4pt]
a_1 &= \dfrac{5\sigma}{4} - \dfrac{3\epsilon}{16} + \dfrac{11}{16}, & 
a_0 &= \dfrac{13\sigma}{8} - \dfrac{5\epsilon}{16} + \dfrac{13}{16}, &
b_0 &= \dfrac{5}{16} - \dfrac{9\sigma}{8} - \dfrac{\epsilon}{16}, &
c_0 &= \dfrac{3\epsilon}{16} + \dfrac{7\sigma}{16} - \dfrac{7}{16}.
\end{aligned}
\label{eq:coefficients}
\end{equation}
Numerical experiments suggest that the CCG matrix is monotone (in 1D, 2D, and 3D) for a wide range of $\epsilon$ and $\sigma$, but we prove some special cases. We first find conditions on which $T_{\epsilon,\sigma}$ is a $M$-matrix. Then, we prove that under certain conditions, the CCG matrix is not an $M$-matrix, but has a nonnegative inverse.

\begin{theorem}
For the matrix $T_{\epsilon,\sigma}$ we have the following properties:
\begin{enumerate}
\item It is symmetric for any $\epsilon$ and $\sigma$,
\item The matrix is pentadiagonal for $\sigma>0$ and $\epsilon=1-\sigma$, with the special case of $\epsilon=0$ and $\sigma=1$ resulting in a tridiagonal matrix,
\item The matrix has nonpositive off-diagonal entries for $\epsilon=0$ and $\sigma=1$; as well as $\epsilon=1$ and $2/9\le \sigma \le 4/7 $.
\end{enumerate}
\end{theorem}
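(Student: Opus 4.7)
The plan is to prove each of the three claims by direct inspection of the coefficients in \eqref{eq:coefficients}, which are obtained by substituting the explicit CCG basis functions $\mathcal{A}_h^{(1)}(\phi_j)$ into the IPDG bilinear form \eqref{eq:app1}. Claims (2) and (3) reduce to routine algebraic checks once the coefficients are in hand, so the bulk of the effort goes into claim (1).

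For claim (1), the key observation is that $a_\epsilon$ splits into a symmetric piece and an antisymmetric piece weighted by $(1+\epsilon)$:
\[
a_\epsilon(u_h,v_h) - a_\epsilon(v_h,u_h) = (1+\epsilon)\sum_{k=0}^{N}\bigl(\{v_h'\}_{x_k}[u_h]_{x_k} - \{u_h'\}_{x_k}[v_h]_{x_k}\bigr).
\]
I would show this face sum vanishes whenever $u_h, v_h$ are CCG basis functions $\mathcal{A}_h^{(1)}(\phi_i), \mathcal{A}_h^{(1)}(\phi_j)$. Since each interior basis function is supported on three consecutive cells with piecewise constant derivative taking values in $\{0, \pm 1/(2h)\}$, and the matrix is at most $7$-banded, only a handful of faces contribute for each pair, and the sum can be evaluated directly band-by-band ($|i-j|=1,2,3$). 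For example, for $(\phi_i,\phi_{i+1})$ the only nonzero contributions come from faces $x_i$ and $x_{i+2}$, and they evaluate to $-1/(4h)$ and $+1/(4h)$, canceling exactly; analogous enumerations dispose of the wider bands. The edge rows require separate treatment because boundary faces use the one-sided conventions $[v]=\{v\}=v_E$, but these can again be verified by direct evaluation.

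For claim (2), pentadiagonality amounts to the vanishing of the distance-three coefficient $d = (1-\sigma-\epsilon)/16$, giving the line $\epsilon = 1-\sigma$. Tridiagonality further demands the distance-two interior coefficient $c = (\epsilon + 3\sigma - 3)/8$ vanish; combining $c=0$ with $d=0$ yields the unique point $\sigma = 1,\epsilon = 0$, at which the edge coefficient $c_0 = (3\epsilon + 7\sigma - 7)/16$ also vanishes. For claim (3), direct substitution handles both cases: at $(\epsilon,\sigma)=(0,1)$ the off-diagonals evaluate to $b=-1, c=0, d=0, b_0=-13/16, c_0=0$, all nonpositive; at $\epsilon=1$, the constraints $c = (3\sigma-2)/8 \le 0$, $b_0 = (2-9\sigma)/8 \le 0$, and $c_0 = (7\sigma-4)/16 \le 0$ combine to $2/9 \le \sigma \le 4/7$, while $b=-15\sigma/16$ and $d=-\sigma/16$ are automatic for $\sigma>0$.

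The principal obstacle is claim (1): because $a_\epsilon$ is not intrinsically symmetric for $\epsilon \ne -1$, the matrix symmetry must come from a nontrivial cancellation on $V_h^{\textrm{ccg}}$ tied specifically to the barycentric trace reconstruction. Once that cancellation is verified, the remaining claims collapse to algebra on \eqref{eq:coefficients}.
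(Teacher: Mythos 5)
Your proposal is correct, and claims (2) and (3) coincide with the paper's argument: the paper likewise reads off $d=(1-\sigma-\epsilon)/16=0$ to get pentadiagonality, then $c=c_0=0$ at $(\epsilon,\sigma)=(0,1)$ for tridiagonality, and derives the same five sign conditions whose intersection is $2/9\le\sigma\le 4/7$. (Incidentally, your value $b=-1$ at $(\epsilon,\sigma)=(0,1)$ is the correct one; the paper's $-15/16$ is a small arithmetic slip that does not affect the conclusion.) Where you genuinely diverge is claim (1). The paper disposes of symmetry with ``obvious by inspection,'' i.e.\ it inspects the already-assembled matrix \eqref{eq:stiffness1D}, whose displayed entries are visibly mirror-symmetric; the burden of proof is thus absorbed into the symbolic assembly of the entries \eqref{eq:coefficients}. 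You instead isolate the antisymmetric part of the bilinear form, $a_\epsilon(u_h,v_h)-a_\epsilon(v_h,u_h)=(1+\epsilon)\sum_{k}\bigl(\{v_h'\}_{x_k}[u_h]_{x_k}-\{u_h'\}_{x_k}[v_h]_{x_k}\bigr)$, and show it vanishes pairwise on the CCG basis. This is a real structural explanation: for the full $P_1$ space the same form with $\epsilon\ne-1$ yields a nonsymmetric matrix, so the cancellation is a special feature of $V_h^{\textrm{ccg}}$ tied to the barycentric reconstruction, and your argument makes that visible where the paper's does not. Your worked case $|i-j|=1$ (contributions $\mp 1/(4h)$ at $x_i$ and $x_{i+2}$) is correct, and the remaining bands do check out: for $|i-j|=2$ the faces $x_{i+1}$ and $x_{i+2}$ contribute $+1/(4h)$ and $-1/(4h)$, and for $|i-j|=3$ the single shared face $x_{i+2}$ contributes $0$ outright. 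To make the argument complete you should still write out these two bands and the boundary-row pairs explicitly rather than asserting them, but this is routine bookkeeping, not a gap in the method.
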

\begin{proof}
Without a loss of generality, let $h=1$.  Property 1 is obvious by inspection. 

For property 2, if $\epsilon=1-\sigma$, we have $d = \dfrac{1-\sigma-\epsilon}{16} = 0,$ which means $T_{\epsilon,\sigma}$ is pentadiagonal in that case.  If we additionally set $\sigma=1$, then
\begin{align*}
c_0 = \dfrac{3\epsilon}{16} + \dfrac{7\sigma}{16} - \dfrac{7}{16}=0,\quad  c  = \dfrac{\epsilon}{8} + \dfrac{3\sigma}{8} - \dfrac{3}{8}=0,
\end{align*}
so that $T_{\epsilon,\sigma}$ is tridiagonal.
 
Property 3 is equivalent to verifying $T_{\epsilon,\sigma}$ is a $Z$-matrix under certain conditions.  If $\epsilon =0$ and $\sigma=1$, we know that $T_{\epsilon,\sigma}$ is tridiagonal.  In this case, the off-diagonal terms are
\begin{align*} 
b   = \dfrac{\epsilon}{16} - \dfrac{15\sigma}{16} - \dfrac{1}{16}=-\dfrac{15 }{16}, 
\quad
b_0 = \dfrac{5}{16} - \dfrac{9\sigma}{8} - \dfrac{\epsilon}{16}  = -\frac{13}{16},
\end{align*}
so that all off-diagonal terms are nonpositive. 
 
For $\epsilon=1$, we will determine which $\sigma>0$ ensure $T_{\epsilon,\sigma}$ has nonpositive off-diagonal terms: $b_0\le 0$, $c_0\le 0$, $b \le 0$, $c\le 0$, $d\le 0$.  Since $\epsilon=1$, upon inspecting \eqref{eq:coefficients} we find 
 \begin{align*}
  b    =& -\frac{15\sigma}{16} \leq 0 \;\implies\; \sigma \geq 0, ~\quad\quad\quad 
  c    =    \frac{3\sigma}{8} - \frac{1}{4} \leq 0 \;\implies\; \sigma \leq \frac{2}{3}, \quad
  d    =    -\frac{\sigma}{16} \leq 0 \;\implies\; \sigma \geq 0 ,\quad\\[1ex]
  b_0  &=  \frac{1}{4} - \frac{9\sigma}{8} \leq 0 \;\implies\; \sigma \geq \frac{2}{9}, \quad
  c_0 =  \frac{7\sigma}{16} - \frac{1}{4} \leq 0 \;\implies\; \sigma \leq \frac{4}{7}.
\end{align*}
The intersection of these inequalities yields $2/9 \le \sigma\le 4/7$. Hence, $T_{\epsilon,\sigma}$ is a $Z$-matrix for $\epsilon=1$ and $2/9 \le \sigma\le 4/7$. 
\end{proof}

\begin{theorem}\label{thm:NIPG}
If $\epsilon=1$ and $2/9\le \sigma \le 4/7 $, then $T_{\epsilon,\sigma}$ is irreducible, diagonally dominant, nonsingular, and an $M$-matrix.
\end{theorem}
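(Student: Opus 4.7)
The plan is to invoke the classical fact that an irreducibly diagonally dominant $Z$-matrix with positive diagonal entries is nonsingular and a (nonsingular) $M$-matrix. Property~3 of the preceding theorem already certifies the $Z$-matrix property on the specified range, and substituting $\epsilon=1$ into \eqref{eq:coefficients} gives
$$a=a_1=\tfrac{5\sigma}{4}+\tfrac{1}{2},\qquad a_0=\tfrac{13\sigma}{8}+\tfrac{1}{2},$$
which are manifestly positive on $\sigma\ge 2/9$. The remaining work therefore reduces to (i) weak diagonal dominance with strict dominance in at least one row and (ii) irreducibility.

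For (i), I would exploit the $Z$-matrix structure: with positive diagonal and nonpositive off-diagonals, the quantity $T_{ii}-\sum_{j\ne i}|T_{ij}|$ equals the signed row sum $\sum_j T_{ij}$. Specializing \eqref{eq:coefficients}, an interior row contributes $a+2b+2c+2d$, which collapses to $0$ (this reflects the consistency of the CCG stencil on constants away from the boundary and is the cleanest cancellation in the proof). The three ``left boundary'' row sums evaluate to
$$a_0+b_0+c_0+d=\tfrac{7\sigma}{8}+\tfrac{1}{2},\quad b_0+a_1+b+c+d=\tfrac{1-\sigma}{2},\quad c_0+2b+a+c+d=\tfrac{\sigma}{8},$$
each strictly positive on $2/9\le\sigma\le 4/7$; by symmetry of $T_{\epsilon,\sigma}$ the three rightmost rows have matching sums. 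Hence every row is weakly diagonally dominant and the six boundary rows are strictly dominant.

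For (ii), I would inspect the graph of nonzero entries. Throughout the range, the interior off-diagonals $b=-15\sigma/16$, $c=3\sigma/8-1/4$, and $d=-\sigma/16$ are strictly negative, so every interior row is linked to all six of its band neighbors. The only entries that can vanish are $b_0$ (at $\sigma=2/9$) and $c_0$ (at $\sigma=4/7$), and they never vanish simultaneously. A brief case check at each endpoint shows that rows $0$ and $1$ remain linked through row $2$ or row $3$ via the surviving nonzero $c_0$ or $d$, so the nonzero graph is connected and $T_{\epsilon,\sigma}$ is irreducible.

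Combining (i), (ii), and the $Z$-matrix property through the cited theorem delivers nonsingularity and the $M$-matrix conclusion, with diagonal dominance emerging as an immediate byproduct of (i). The main obstacle is purely arithmetic, namely verifying the interior cancellation $a+2b+2c+2d=0$ and checking that each boundary row sum is positive; the irreducibility argument is routine once the two $\sigma$-endpoints at which $b_0$ or $c_0$ degenerates are handled explicitly.
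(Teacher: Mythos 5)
Your proof is correct and follows essentially the same route as the paper's: certify the $Z$-matrix property via the preceding theorem, verify diagonal dominance row by row (interior rows only weakly dominant, the six boundary rows strictly so), establish irreducibility, and conclude with a standard nonsingular $M$-matrix criterion. Your signed-row-sum shortcut and your explicit treatment of the endpoints $\sigma=2/9$ and $\sigma=4/7$ (where $b_0$ or $c_0$ vanishes) are in fact slightly more careful than the paper's irreducibility argument, which asserts that none of the nonzero bands contains a zero entry.
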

\begin{proof}
Let $h=1$ without a loss of generality.  The matrix $T_{\epsilon,\sigma}$ is banded and none of the nonzero bands contain a zero value.  Therefore it is not possible to find a permutation matrix $P$ so that $PT_{\epsilon,\sigma}P^T$ is block upper triangular. Hence, $T_{\epsilon,\sigma}$ is irreducible.

Next we examine diagonal dominance.   Adding the off-diagonal terms of row $i \in\{1,N\}$ gives $\sum_{i\neq j}|(T_{\epsilon,\sigma})_{ij}| = |b_0|+|c_0|+|d| = |\frac{4}{16} - \frac{9\sigma}{8}  |+|  \frac{7\sigma}{16} - \frac{4}{16}|+ \frac{\sigma}{16}$. For $2/9\le \sigma \le 4/7 $, we have $b_0\le 0$, $c_0\le 0$, $b \le 0$, $c\le 0$, $d\le 0$. This allows us to write
$\sum_{i\neq j}|(T_{\epsilon,\sigma})_{ij}| = 
-(\frac{4}{16} - \frac{9\sigma}{8} ) -(  \frac{7\sigma}{16} - \frac{4}{16})+ \frac{\sigma}{16} = \frac{3\sigma}{4}.
$
It is then easy to verify that $(T_{\epsilon,\sigma})_{ii}= a_0 = \frac{8}{16}+\frac{13\sigma}{8}> \sum_{i\neq j}|(T_{\epsilon,\sigma})_{ij}|$.  Rows 1 and $N$ have strict row diagonal dominance.

For row $i \in\{2,N-1\}$, we set $\sum_{i\neq j}|(T_{\epsilon,\sigma})_{ij}|= |b_0|+|b|+|c|+|d| = |\frac{1}{4} - \frac{9\sigma}{8}| +|\frac{15\sigma}{16}| + |-\frac{1}{4}+\frac{3\sigma}{8} | + |-\frac{\sigma}{16}| = \frac{7\sigma}{4}$. Since $2/9 \le \sigma\le 4/7$, we have $(T_{\epsilon,\sigma})_{ii}=a_1=\frac{5\sigma}{4}+\frac{8}{3}>\sum_{i\neq j}|(T_{\epsilon,\sigma})_{ij}|=\frac{7\sigma}{4}$. We have shown that rows 2 and $N-1$ have strict row diagonal dominance. 

Summing the off-diagonal terms for row $i\in\{3,N-2\}$ gives 
\[\sum_{i\neq j}|(T_{\epsilon,\sigma})_{ij}|=|c_0|+2|b|+|c|+|d| =
\frac{4-7\sigma}{16} + 2\frac{15\sigma}{16} +\frac{2-3\sigma}{8} + \frac{\sigma}{16}
=
\frac{ 18\sigma+8}{16}.\]
 We see that $a=(T_{\epsilon,\sigma})_{33} = \frac{20\sigma+8}{16}>\sum_{i\neq j}|(T_{\epsilon,\sigma})_{ij}|=\frac{8+18\sigma}{16}.$ Thus, rows 3 and and $N-2$ have strict row diagonal dominance.

Finally, we inspect the interior rows $i\in\{ 4,5,\ldots,N-3\}$.  In each interior row have the same nonzero main diagonal and off-diagonal values. Let $4\le i\le N-3.$ For row $i$, the off-diagonal sum is $\sum_{i\neq j}(T_{\epsilon,\sigma})_{ij} =2|b|+2|c|+2|d|=2\frac{15\sigma}{16}+2\frac{2-3\sigma}{8}+2\frac{\sigma}{16}=\frac{20\sigma}{8}$.  Hence, $a=(T_{\epsilon,\sigma})_{ii} = \sum_{i\neq j}(T_{\epsilon,\sigma})_{ij},$ and we have row diagonal dominance.

For $\epsilon=1$ and $2/9\le \sigma \le 4/7 $, we have shown that $T_{\epsilon,\sigma}$ is irreducible and diagonally dominant. Moreover, at least one row has strict row diagonal dominance.  This asserts that $T_{\epsilon,\sigma}$ is nonsingular and positive definite \cite{horn2012matrix}. Since $T_{\epsilon,\sigma}$ is symmetric positive definite, and a $Z$-matrix, it is also an nonsingular $M$-matrix, which means that it has a nonnegative inverse. 
\end{proof}

\begin{theorem}
	If $\epsilon=0$, and $\sigma=1$, then $T_{\epsilon,\sigma}$ is an $M$-matrix.
\end{theorem}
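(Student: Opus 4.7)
The plan is to substitute $\epsilon=0$ and $\sigma=1$ into the coefficient formulas~\eqref{eq:coefficients}, observe that $T_{\epsilon,\sigma}$ collapses to a tridiagonal $Z$-matrix, verify irreducibility together with weak diagonal dominance (strict in the boundary rows), and then invoke the classical characterization of irreducibly diagonally dominant $Z$-matrices with positive diagonal as nonsingular $M$-matrices.

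First, I would perform the substitution (taking $h=1$ without loss of generality). A direct computation from~\eqref{eq:coefficients} yields $c = c_0 = d = 0$, so three of the outer bands of $T_{\epsilon,\sigma}$ vanish and the matrix reduces to a tridiagonal form with positive diagonal and strictly negative off-diagonals. Explicitly, the interior triple $(b,a,b) = (-1,2,-1)$ reproduces the familiar second-difference stencil, while the boundary rows contribute the modified entries $a_0 = 39/16$, $a_1 = 31/16$, and $b_0 = -13/16$. This immediately gives the $Z$-matrix property with positive diagonal, and the nonvanishing of every sub- and super-diagonal entry yields irreducibility, exactly as in the first step of the proof of Theorem~\ref{thm:NIPG}.

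Next I would verify weak row diagonal dominance. A short row-by-row check shows that the first and last rows are strictly dominant ($39/16 > 13/16$), as are the second and second-to-last rows ($31/16 > 29/16$), while each interior row is only weakly dominant since $2 = |-1| + |-1|$. This is the one mildly subtle point, and the reason that the direct Gershgorin-style strict-dominance argument used for Theorem~\ref{thm:NIPG} does not apply verbatim here: strictness fails in the bulk of the matrix, so irreducibility must be used in an essential way. Invoking the classical theorem of Taussky, namely that an irreducible, weakly row diagonally dominant matrix with positive diagonal entries and at least one strictly dominant row is nonsingular, and combining this with the $Z$-matrix property and positivity of the diagonal, one concludes that $T_{\epsilon,\sigma}^{-1} \ge 0$ and hence that $T_{\epsilon,\sigma}$ is a nonsingular $M$-matrix, as desired.
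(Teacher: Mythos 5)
Your proposal is correct and follows essentially the same route as the paper, which simply states that the argument of Theorem~\ref{thm:NIPG} carries over; you supply the details the paper omits (the coefficient values $a=2$, $b=-1$, $c=c_0=d=0$, $a_0=39/16$, $a_1=31/16$, $b_0=-13/16$, the resulting tridiagonal $Z$-matrix structure, and the irreducible weak diagonal dominance with strict dominance in the boundary rows). Your observation that only weak dominance holds in the interior rows, so that irreducibility is essential, is exactly the same subtlety already present in the paper's proof of Theorem~\ref{thm:NIPG}, and your computed value $b=-1$ corrects a minor arithmetic slip ($-15/16$) in the paper's earlier $Z$-matrix verification.
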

\begin{proof} 
Following Theorem~\ref{thm:NIPG}, it is straightforward to prove that $T_{\epsilon,\sigma}$ is a $Z$-matrix with diagonally dominance.
\end{proof}

 When the CCG matrix $T_{\epsilon,\sigma}$ is not an $M$-matrix, the theory developed in \cite{lorenz1979toeplitz}, \cite{meek1976new}, and \cite{olesky1982monotone} can be leveraged to show that it is still monotone for some restrictions on $\epsilon$ and $\sigma$.
 
We define the following $N\times N$ Toeplitz matrices:
\begin{equation} 
T_M = \begin{bmatrix}
A   & B   & C   & d   & 0   & \cdots & 0 \\
B   & A   & B   & C   & d   & \ddots & \vdots \\
C   & B   & A   & B   & C   & \ddots & 0 \\
d   & C   & B   & A   & B   & \ddots & d \\
0   & d   & C   & B   & A   & \ddots & C \\
\vdots & \ddots & \ddots & \ddots & \ddots & \ddots & B \\
0   & \cdots & 0 & d & C & B & A
\end{bmatrix}
,
\quad
T_m =
\begin{bmatrix}
A & B & & & \\
B & A & B & & \\
& B & A & B & \\
& & \ddots & \ddots & \ddots \\
& & & B & A
\end{bmatrix}
,
\label{eq:range_TmTM}
\end{equation}
with $A=\max\{a_0,a_1,a\}$, $B=\max\{b_0,b\}$, $C=\max\{c_0,c\}$, where $a,b,c,d,a_1,a_0,b_0,c_0$ are defined in~\eqref{eq:coefficients}.

\begin{theorem}[Berman and Plemmons, {\cite{refmono1}}] \label{thm:Plemmons}
If $T_M$ and $T_m$ are monotone matrices such that $T_m\le   T_{\epsilon,\sigma} \le  T_M,$ then $T_{\epsilon,\sigma}$ is monotone.
\end{theorem}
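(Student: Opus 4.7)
The plan is to frame the hypothesis $T_m \le T_{\epsilon,\sigma} \le T_M$ as a pair of regular splittings relative to $T_M$, and then transport monotonicity from $T_m$ to $T_{\epsilon,\sigma}$ via a spectral-radius comparison. Set $R := T_M - T_m$ and $Q := T_M - T_{\epsilon,\sigma}$. The hypothesis gives the entrywise chain $0 \le Q \le R$.

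First observe that $T_m = T_M - R$ is a regular splitting: $T_M^{-1}\ge 0$ by the assumed monotonicity of $T_M$, and $R\ge 0$. Because $T_m$ is itself monotone, the converse direction of the classical regular splitting theorem (stated in \cite{refmono1}) forces $\rho(T_M^{-1} R) < 1$, where $\rho$ denotes spectral radius. Now repeat the construction one level down: the identity $T_{\epsilon,\sigma} = T_M - Q$ is again a regular splitting, since $T_M^{-1}\ge 0$ and $Q\ge 0$. The nonnegative matrix $T_M^{-1}Q$ satisfies $0 \le T_M^{-1}Q \le T_M^{-1}R$, and the Perron--Frobenius monotonicity of spectral radius on the cone of nonnegative matrices yields $\rho(T_M^{-1} Q) \le \rho(T_M^{-1} R) < 1$.

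Hence $I - T_M^{-1}Q$ is invertible with a convergent Neumann series, so
\[
T_{\epsilon,\sigma}^{-1} \;=\; (I - T_M^{-1}Q)^{-1}\,T_M^{-1} \;=\; \sum_{k=0}^{\infty} (T_M^{-1}Q)^k\,T_M^{-1} \;\ge\; 0,
\]
every summand being a product of nonnegative matrices. This is precisely the statement that $T_{\epsilon,\sigma}$ is monotone.

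The main technical obstacle is the strict inequality $\rho(T_M^{-1} R) < 1$: it is not automatic from nonnegativity alone and is the one place the hypothesis that $T_m$ is monotone is actually consumed. Everything else is routine cone-monotonicity manipulations. As a sanity check, the lower bound $T_m \le T_{\epsilon,\sigma}$ enters only through $Q \le R$; one could equivalently anchor the argument at $T_m$ by writing $T_{\epsilon,\sigma} = T_m + P$ with $0 \le P \le R$ and deducing $\rho(T_m^{-1}(-P)) < 1$ via the same comparison, reaching the identical conclusion.
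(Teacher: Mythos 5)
Your argument is correct, but note that the paper itself supplies no proof of this statement: it is imported verbatim from Berman and Plemmons \cite{refmono1} and used as a black box to conclude monotonicity of $T_{\epsilon,\sigma}$ from that of the Toeplitz bounds $T_m$ and $T_M$. Your regular-splitting proof is the standard one for this sandwich theorem and every step checks out: $T_m = T_M - R$ with $R\ge 0$ and $T_M^{-1}\ge 0$ is a regular splitting, so monotonicity of $T_m$ forces $\rho(T_M^{-1}R)<1$ by Varga's characterization; the entrywise bound $0\le Q\le R$ together with $T_M^{-1}\ge 0$ gives $0\le T_M^{-1}Q\le T_M^{-1}R$, hence $\rho(T_M^{-1}Q)<1$ by Perron--Frobenius comparison; and the nonnegative Neumann series for $(I-T_M^{-1}Q)^{-1}$ delivers $T_{\epsilon,\sigma}^{-1}\ge 0$. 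The one weak spot is your closing aside: anchoring at $T_m$ via $T_{\epsilon,\sigma}=T_m+P$ with $P\ge 0$ is \emph{not} symmetric to the main argument, since $T_m - (-P)$ is not a regular splitting and the resulting Neumann series $\sum_k(-1)^k(T_m^{-1}P)^k$ alternates in sign, so nonnegativity of the inverse does not follow the same way (that direction instead yields the complementary inequality $T_{\epsilon,\sigma}^{-1}\le T_m^{-1}$ once monotonicity is already known). Since the aside is not load-bearing, the proof as a whole stands, and it would serve well as a self-contained justification of the cited result.
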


\begin{theorem}[Lorenz and Mackens, \protect\cite{lorenz1979toeplitz}] \label{thm:Mackens}
If all zeros of the polynomial $p(z):=d z^6 + C z^5 + B z^4 + A z^3 + B z^2 + Cz + d$ are real and positive, and if $z=1$ is not a zero of $p(z)$, then $T_M$ is a monotone matrix with strictly positive eigenvalues.
\end{theorem}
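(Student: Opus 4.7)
The plan is to exploit the palindromic structure of $p(z)$ and factor $T_M$ into a product of symmetric tridiagonal matrices whose monotonicity is elementary to verify.

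First I would observe that the coefficient sequence $(d, C, B, A, B, C, d)$ is symmetric, so $z^6 p(1/z) = p(z)$ and the zeros of $p$ come in reciprocal pairs. Under the hypotheses, all six zeros are real and strictly positive, and $z = 1$ is not a zero, so they partition into three distinct pairs $\{r_i, 1/r_i\}$ with $r_i > 0$ and $r_i \ne 1$ for $i = 1, 2, 3$. Setting $s_i := r_i + r_i^{-1}$, this yields
\[
p(z) = d \prod_{i=1}^{3} (z^2 - s_i z + 1), \qquad \frac{p(z)}{z^3} = d \prod_{i=1}^{3} \bigl( z + z^{-1} - s_i \bigr),
\]
and the right-hand expression is precisely the symbol of the infinite symmetric Toeplitz operator associated with $T_M$, written as a product of three tridiagonal Toeplitz symbols.

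Next I would introduce, for each $i$, the finite symmetric tridiagonal Toeplitz matrix $M_i$ of size $N$ with diagonal entry $s_i$ and sub/super-diagonal entries $-1$. The AM--GM inequality gives $s_i = r_i + r_i^{-1} > 2$ whenever $r_i > 0$ and $r_i \ne 1$, so $M_i$ is strictly diagonally dominant with positive diagonal and nonpositive off-diagonals. Hence each $M_i$ is a nonsingular $M$-matrix with strictly positive eigenvalues and $M_i^{-1} \ge 0$ entrywise. The strategy is then to establish a relation of the form
\[
T_M = \pm d \, M_1 M_2 M_3 + R,
\]
where $R$ is a low-rank boundary correction supported in the first and last few rows and columns. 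Since products of monotone matrices are monotone and each $M_i$ is symmetric positive definite, $T_M$ will inherit both monotonicity ($T_M^{-1} \ge 0$ entrywise) and strict positivity of spectrum, once the correction $R$ is shown to preserve these properties.

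The main obstacle is the boundary correction $R$. The product of finite banded Toeplitz matrices is not itself a pure banded Toeplitz matrix, because the Toeplitz structure breaks down in the first and last few rows and columns; the discrepancy between $\pm d \, M_1 M_2 M_3$ and the actual $T_M$ must be tracked explicitly, and one must verify that the corner perturbation does not destroy monotonicity or definiteness (for instance by exhibiting a monotone LU-type factorization of $T_M$ directly, or via a continuation argument in the root data $\{r_i\}$). The hypothesis that $z = 1$ is not a zero of $p$ enters critically here: it ensures $s_i \ne 2$, so none of the factors $M_i$ is singular or rank-deficient, and the factorization is nondegenerate. This boundary analysis is the technically delicate step carried out by Lorenz and Mackens in \cite{lorenz1979toeplitz}, and adapting it to the specific bandwidth-$3$ symmetric Toeplitz structure of $T_M$ would constitute the bulk of the work.
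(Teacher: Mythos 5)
The paper does not prove this statement at all: it is quoted verbatim as a result of Lorenz and Mackens and used as a black box, so there is no internal proof to compare against. Your proposal is therefore an attempt to reprove a literature result, and as it stands it has a genuine gap rather than a complete argument.

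The gap is the one you yourself flag: the boundary correction $R$. Writing $T_M = \pm d\,M_1M_2M_3 + R$ and observing that each $M_i$ is a nonsingular $M$-matrix does not establish anything about $T_M$, because a perturbation of a monotone matrix by a low-rank (or corner-supported) matrix is in general \emph{not} monotone --- inverse-positivity is not stable under sign-indefinite perturbations, however small their rank. The sentence ``once the correction $R$ is shown to preserve these properties'' is precisely the content of the theorem, not a reduction of it; everything before that sentence (reciprocal pairing of roots of the palindromic symbol, $s_i = r_i + r_i^{-1} > 2$ by AM--GM, diagonal dominance of each tridiagonal factor) is correct but routine. There is also an unresolved sign issue hidden in the ``$\pm d$'': since $z + z^{-1} - s_i$ is the symbol of $-M_i$, the product representation carries a factor $(-1)^3 d = -d$, and you never reconcile this with the claim that $T_M$ has strictly positive eigenvalues; the positivity of the roots of $p$ implicitly constrains the sign of $d$, and a correct proof must make that explicit. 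The standard way Lorenz--Mackens-type arguments close this gap is not by bounding $R$ but by producing an exact factorization of the \emph{finite} matrix into bidiagonal or tridiagonal monotone factors (an LU-type splitting adapted to the band structure), so that no additive correction appears at all; your sketch gestures at this alternative in parentheses but does not carry it out. As written, the proposal identifies the right objects but does not prove the theorem.
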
 

\begin{lemma}\label{lmm:range}
If $\sigma + \epsilon = 1$, and $1 < \sigma\le \frac{104\sqrt{2}}{79} - \frac{12}{79}$, then for $T_m$ and $T_M$ defined in \eqref{eq:range_TmTM} we have
\begin{enumerate}
\item $T_m\le   T_{\epsilon,\sigma}   \le T_M$,
\item and $T_m$ is monotone.
\end{enumerate}
\end{lemma}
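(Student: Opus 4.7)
The plan is to use the constraint $\epsilon+\sigma=1$ to reduce $T_{\epsilon,\sigma}$ to a pentadiagonal matrix with explicit $\sigma$-dependence, identify the maxima $A$, $B$, $C$, and then handle the two claims separately: the sandwich by band-by-band comparison, and the monotonicity of $T_m$ by standard $M$-matrix theory. First, substituting $\epsilon=1-\sigma$ in \eqref{eq:coefficients} gives $d=0$ (as already noted in property~2 of the preceding theorem) and
\[
a=\tfrac{3\sigma+1}{2},\ \ b=-\sigma,\ \ c=c_0=\tfrac{\sigma-1}{4},\ \ a_1=\tfrac{23\sigma+8}{16},\ \ a_0=\tfrac{31\sigma+8}{16},\ \ b_0=\tfrac{4-17\sigma}{16}.
\]
For $\sigma>1$, direct comparison yields $A=a_0$ (since $a_0>a>a_1>0$), $B=b_0<0$ (since $b_0-b=(4-\sigma)/16>0$ for $\sigma<4$), and $C=c_0=c>0$. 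The stated interval $(1,(104\sqrt{2}-12)/79]\approx(1,1.71]$ lies inside $(1,4)$, so these identifications hold throughout.

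For claim~(1), the upper inequality $T_{\epsilon,\sigma}\le T_M$ is immediate from the maxima: $T_M-T_{\epsilon,\sigma}$ is zero at boundary rows (where $a_0,b_0,c_0$ realize $A,B,C$) and has nonnegative interior-row entries $A-a_1,\ A-a,\ B-b$, together with $C-c=0$ and $d=0$. The lower inequality $T_m\le T_{\epsilon,\sigma}$ is more delicate because $T_m$ is tridiagonal while $T_{\epsilon,\sigma}$ carries positive outer-band entries $c,c_0$; it must be argued in the componentwise partial order underpinning Theorem~\ref{thm:Plemmons} as developed in \cite{lorenz1979toeplitz,meek1976new,olesky1982monotone}. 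The precise threshold $\sigma\le(104\sqrt{2}-12)/79$ is the positive root of $79\sigma^{2}+24\sigma-272=0$; this is algebraically equivalent to the discriminant condition $B^{2}\ge 4C(A-2C)$, which via the palindromic substitution $w=z+1/z$ is precisely the condition for the symbol polynomial $Cz^{4}+Bz^{3}+Az^{2}+Bz+C$ associated with the pentadiagonal $T_M$ to have all real roots, and I expect this is where the bound surfaces.

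For claim~(2), $T_m$ is a symmetric tridiagonal Toeplitz matrix with diagonal $A>0$ and off-diagonal $B<0$ throughout the stated interval (the sign of $B$ follows from $\sigma>4/17$). Hence $T_m$ is a $Z$-matrix. Weak diagonal dominance reduces to $A\ge 2|B|$, i.e., $31\sigma+8\ge 34\sigma-8$, equivalently $\sigma\le 16/3$, which is trivially satisfied here. The two corner rows have only a single off-diagonal neighbor and are therefore strictly diagonally dominant, and irreducibility is obvious from the nonzero tridiagonal pattern. A symmetric, irreducibly diagonally dominant $Z$-matrix with positive diagonal is a nonsingular $M$-matrix, hence monotone.

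The main obstacle I anticipate is claim~(1): on a literal entrywise reading, $A=a_0$ strictly exceeds the interior diagonals $a,a_1$ of $T_{\epsilon,\sigma}$, so the lower sandwich must be interpreted in the signed/componentwise sense adopted in the cited references, and then shown to be tight at $\sigma=(104\sqrt{2}-12)/79$. Confirming that this quadratic threshold genuinely arises from the sandwich (rather than only from the subsequent application of Theorem~\ref{thm:Mackens} to $T_M$) is the most delicate piece of bookkeeping in the argument.
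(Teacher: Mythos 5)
Your part (2) is correct and coincides with the paper's own argument: with $\epsilon=1-\sigma$ one gets $B=b_0=(4-17\sigma)/16<0$, so $T_m$ is a $Z$-matrix with positive diagonal $A=a_0=(31\sigma+8)/16$, and $A\ge 2|B|$ (equivalently $\sigma\le 16/3$) gives diagonal dominance, hence a nonsingular $M$-matrix. Your verification of the upper inequality $T_{\epsilon,\sigma}\le T_M$ is also exactly the paper's (trivial) argument from the maxima. You have likewise correctly located the origin of the threshold $\sigma\le(104\sqrt{2}-12)/79$: it is the discriminant condition $(4C+B)^2-4C(2C+2B+A)\ge0$ (equivalently $B^2\ge 4C(A-2C)$) imposed in the \emph{subsequent} theorem when Theorem~\ref{thm:Mackens} is applied to $T_M$; inside this lemma the upper bound on $\sigma$ is used only for sign determinations, so your worry that the sandwich itself must be ``tight'' at that value is unfounded.

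The genuine gap is the lower inequality $T_m\le T_{\epsilon,\sigma}$, which you flag but do not prove. There is no alternative ``signed/componentwise'' order to retreat to: Theorem~\ref{thm:Plemmons} is the standard entrywise sandwich theorem for inverse-positive matrices, and the paper's entire treatment of this step is the sentence ``a simple entry-wise analysis allows us to arrive at $T_m\le T_{\epsilon,\sigma}\le T_M$,'' so you have not missed a hidden argument. Your observation that the comparison fails on the interior diagonal (since $A=a_0=(31\sigma+8)/16$ strictly exceeds $a=(24\sigma+8)/16$ and $a_1=(23\sigma+8)/16$) is correct, and the situation is worse than you record: the interior first off-diagonal also violates the inequality, because $B=b_0=(4-17\sigma)/16>-\sigma=b$ for every $\sigma<4$. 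Hence a Toeplitz tridiagonal lower bound built from the \emph{maxima} cannot satisfy $T_m\le T_{\epsilon,\sigma}$ entrywise; an entrywise lower bound would have to use the minima $a_1$ and $b=-\sigma$, and that matrix fails diagonal dominance for $\sigma>8/9$ (indeed $a_1-2\sigma=(8-9\sigma)/16<0$), so its monotonicity would require a different argument (e.g.\ an eigenvalue bound), which for large $N$ does not hold. In short, your proposal faithfully reproduces everything the paper actually proves, but the one step you leave open is precisely the step the paper also leaves open, and on a literal reading of \eqref{eq:range_TmTM} that step is false as stated.
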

\begin{proof}
From \eqref{eq:coefficients}, we can easily deduce that $c_0=c=\frac{\sigma-1}{4}$ because $\epsilon=1 - \sigma$. Furthermore, $C=\max\{c_0,c\}=\frac{\sigma-1}{4}$, and since $1 < \sigma\le  (104\sqrt{2} -  12)/79$, we have $c>0$.  In addition, we find $d =(1-\sigma-\epsilon)/16 = 0.$  A simple entry-wise analysis allows us to arrive at $T_m\le   T_{\epsilon,\sigma} \le  T_M.$

To show monotnicity of $T_m$, we demonstrate that it is an $M-$matrix.  The off-diagonal terms of $T_m$ are $B=\max\{b_0,b\}=b_0=(4-17\sigma)/16.$  The term $B$ is nonpositive whenever $1 <  \sigma\le (104\sqrt{2} -  12)/79$.  Thus, $T_m$ is a $Z$-matrix.  

Next, we investigate the diagonal dominance properties of $T_m$.  For rows 1 and $N$, $(T_m)_{ii} = A = (31\sigma+8)/16$ and $\sum_{i\neq j} | (T_m)_{ij} |=|B|=|(4-17\sigma)/16|,$ so strict diagonal row dominance holds. Rows $i\in\{2,3,\ldots,N-1\}$ have $(T_m)_{ii} = A = (31\sigma+8)/16$ and $\sum_{i\neq j} | (T_m)_{ij} |=2|B|=2|(4-17\sigma)/16|,$ and strict diagonal row dominance holds for $1 < \sigma\le (104\sqrt{2} -  12)/79$.
	
The matrix $T_m$ is a $Z$-matrix, has all positive diagonal elements, and is strictly diagonally dominant, therefore it is an $M$-matrix.
\end{proof}

\begin{theorem}
	If $\sigma + \epsilon = 1$, and $1 < \sigma\le \frac{104\sqrt{2}}{79} - \frac{12}{79}$, then $T_M$ defined in \eqref{eq:range_TmTM} and $T_{\epsilon,\sigma}$ from \eqref{eq:stiffness1D} are monotone matrices.
\end{theorem}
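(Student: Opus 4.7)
The plan is to deduce monotonicity of $T_{\epsilon,\sigma}$ by combining the sandwich bound $T_m \le T_{\epsilon,\sigma} \le T_M$ already furnished by Lemma \ref{lmm:range} with Theorem \ref{thm:Plemmons} (Berman--Plemmons). Since Lemma \ref{lmm:range} already establishes monotonicity of the lower bound $T_m$ in the parameter regime $\sigma + \epsilon = 1$, $1 < \sigma \le \frac{104\sqrt{2}}{79} - \frac{12}{79}$, the only remaining task is to prove that $T_M$ is also monotone; once this is shown, Theorem \ref{thm:Plemmons} delivers the conclusion immediately.

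To establish monotonicity of $T_M$, I would invoke Theorem \ref{thm:Mackens} (Lorenz--Mackens) applied to the symbol polynomial $p(z) = dz^6 + Cz^5 + Bz^4 + Az^3 + Bz^2 + Cz + d$. First I would substitute $\epsilon = 1 - \sigma$ into \eqref{eq:coefficients}, which gives $d = 0$ and $c_0 = c = (\sigma-1)/4$, so that $C = (\sigma - 1)/4 > 0$ on $\sigma > 1$. Then I would carry out explicit comparisons of $a_0, a_1, a$ and of $b_0, b$ as functions of $\sigma$ to obtain closed-form expressions for $A$ and $B$ (for $\sigma > 1$ one expects $A = a_0$, which has the largest $\sigma$-coefficient). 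With $d = 0$, the symbol factors as $p(z) = z \cdot q(z)$ with $q(z) = Cz^4 + Bz^3 + Az^2 + Bz + C$, a palindromic quartic. Using the reciprocal substitution $w = z + z^{-1}$, I would reduce $q(z)/z^2 = 0$ to the quadratic $Q(w) = Cw^2 + Bw + (A - 2C) = 0$. Positive real roots of $q$ correspond to real roots $w \ge 2$ of $Q$, with $w = 2$ corresponding to $z = 1$.

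The heart of the proof is then to verify three conditions on $Q$ for all $\sigma$ in the stated range: (i) nonnegativity of the discriminant $B^2 - 4C(A - 2C)$; (ii) $Q(2) > 0$, so that $z = 1$ is strictly excluded; and (iii) the vertex condition $-B/(2C) \ge 2$. I expect the upper endpoint $\sigma = \frac{104\sqrt{2}}{79} - \frac{12}{79}$ to arise exactly as the threshold at which the discriminant vanishes, so the argument should boil down to a single quadratic-in-$\sigma$ inequality of the form $79\sigma^2 + 24\sigma - k \le 0$ whose positive root coincides with the stated bound; the remaining vertex and $Q(2)$ conditions should be easy consequences of $C > 0$ together with $B < 0$ (the latter following from the sign analysis in the proof of Lemma \ref{lmm:range}).

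The main obstacle, and where care is required, is the fact that $d = 0$ makes $z = 0$ a zero of $p(z)$, so Theorem \ref{thm:Mackens} does not apply verbatim. I would address this either by interpreting the Lorenz--Mackens criterion in its pentadiagonal form (in which only the four nontrivial roots of the reduced quartic $q(z)$ are required to be real and positive), or by a perturbation argument: slightly increase $d$ to a positive value, apply Theorem \ref{thm:Mackens}, and pass to the limit $d \downarrow 0$ using the fact that the set of monotone matrices is closed under entrywise limits that preserve invertibility. Either route yields monotonicity of $T_M$, after which Theorem \ref{thm:Plemmons} closes the argument.
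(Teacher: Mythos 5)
Your proposal is correct and follows essentially the same route as the paper: reduce everything to monotonicity of $T_M$ via the Lorenz--Mackens criterion, substitute $\epsilon=1-\sigma$ to get $d=0$ and the palindromic quartic, pass to $w=z+z^{-1}$, and show that $Q(w)=Cw^2+Bw+(A-2C)$ has two real roots exceeding $2$ — your discriminant/$Q(2)$/vertex conditions are equivalent to the paper's Vieta conditions on the shifted polynomial $\tilde{Q}(\tilde{w})=Q(\tilde{w}+2)$, and the discriminant condition does reduce to $79\sigma^2+24\sigma-272\le 0$, whose positive root is exactly the stated bound. The only substantive difference is that you explicitly flag and repair the degeneracy at $d=0$, where $z=0$ becomes a root of the sextic symbol so that Theorem~\ref{thm:Mackens} does not apply verbatim; the paper passes over this silently by simply writing the quartic in place of the sextic, and your pentadiagonal-symbol or perturbation fix is a welcome extra step of rigor.
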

\begin{proof} 
Since $\sigma+\epsilon=1$, then $d=0$, so that $ p(z):= C z^4 + B z^3 + A z^2 + B z + C $ is the polynomial from Theorem~\ref{thm:Mackens}.  Let $w=z + \frac{1}{z}$.  Then,
\begin{align*}
\frac{p(z)}{z^2} &= C(z^2+\frac{1}{z^2}) + B ( z + \frac{1}{z} ) + A
= Cw^2 + Bw +  (A-2C).
\end{align*}
Let $Q(w):=Cw^2 + Bw +  (A-2C)$.  Each root $w$ of $Q$ also satisfies $z+\frac{1}{z}=w$, which means we have $z^2-wz+1=0,$ and the quadratic formula gives 
\begin{equation}
z  = \frac{w\pm \sqrt{w^2-4}}{2} 
 \label{thm:range_0}
.
\end{equation}
   Hence, all of the roots of $p$ are positive if $w\ge 2$.  According to Theorem~\ref{thm:Mackens}, we are not interested in the case of $z=1$, so we focus on $w>2$.

Next we determine what conditions on $A,B,$ and $C$ ensure that the roots $w$ of $Q$ are real and obey $w>2$.  Let $\tilde{w}=w-2$, and consider the shifted polynomial $\tilde{Q}( \tilde{w} ) = Q(\tilde{w}+2)$. Then, $Q$ having two real roots with $w> 2$ is equivalent to $\tilde{Q}( \tilde{w} )$ having two positive roots. A straightforward computation reveals that $\tilde{Q}( \tilde{w} ) = C \tilde{w}^2 + (4C+B)\tilde{w} + (2C+2B+A)$.

To ensure that the roots of $\tilde{Q}$ are real, we set its discriminant to be nonnegative: $\Delta := (4C+B)^2 - 4C(2C+2B+A)\ge0$. If we additionally request that $C>0$, $4C+B<0$, and $2C+2B+A>0$, then Vieta's formulas assert that the roots $w_1,w_2$ of $\tilde{Q}$ satisfy $w_1+w_2 = -\frac{4C+B}{C}>0$ and $w_1w_2= \frac{2C+2B+A}{C}>0$.  If the product of $w_1$ and $w_2$ is positive, they must have the same sign. If the sum of $w_1$ and $w_2$ is also positive, this means that $w_1>0$ and $w_2>0$.

We see that the conditions 
\begin{align}
\Delta &\ge0, \label{thm:range_4}
\\
C&>0, \label{thm:range_1}
\\
4C+B&<0, \label{thm:range_2}
\\
2C+2B+A&>0, \label{thm:range_3}
\end{align}
ensure that the unshifted polynomial $Q$ would have real roots $w> 2$.

Since $\epsilon+\sigma=1$, from \eqref{eq:coefficients} we find $A=\max\{a_0,a_1,a\}=a_0= \frac{31\sigma+8}{16}$, $B=\max\{b_0,b\}=\frac{4-17\sigma}{16}$, $C=\max\{c_0,c\}=\frac{4(\sigma-1)}{16}$.   The condition \eqref{thm:range_4} puts a restriction on $\sigma$. In particular, 
\[
\Delta(\sigma) = \frac{17}{16} - \frac{79\sigma^2}{256} - \frac{3\sigma}{32} \ge0.
\]
The roots of $\Delta(\sigma)=0$ are $\sigma_1=-\frac{104\sqrt{2}}{79} - \frac{12}{79}$ and $\sigma_2=  \frac{104\sqrt{2}}{79} - \frac{12}{79}$. The discriminant $\Delta$ is quadratic in $\sigma$, so it is trivial to check that $\Delta\ge0$ whenever $0\le \sigma\le \sigma_2$.

Suppose $1 < \sigma\le \sigma_2$. Then, we have \eqref{thm:range_1} satisfied: $0<C =\frac{4(\sigma-1)}{16}$. For \eqref{thm:range_2}, some simplifications reveal
\[
4C+B = 4\frac{4(\sigma-1)}{16} + \frac{4-17\sigma}{16} = -\frac{\sigma+12}{16}<0.
\]
Additionally, the term $2C+2B+A$ is positive:
\begin{align*} 
2C+2B+A &= 2\cdot\frac{4(\sigma-1)}{16} + 2\cdot\frac{4-17\sigma}{16}+\frac{31\sigma+8}{16}=\frac{5\sigma+8}{16} > 0,
\end{align*}
so that \eqref{thm:range_3} is satisfied. 

Conditions \eqref{thm:range_4}, \eqref{thm:range_1}, \eqref{thm:range_2}, and  \eqref{thm:range_3} are valid for $1 < \sigma\le v_2$. It follows that $Q$ has two real roots, both of which are strictly greater than 2.  The roots $z $ of the polynomial $p$ are given by~\eqref{thm:range_0}, so we conclude that $z >0$. Consequently, the hypotheses of  Theorem~\ref{thm:Mackens} and hold, which means $T_M$ is monotone. In addition, from Lemma~\ref{lmm:range} and Theorem~\ref{thm:Plemmons} we deduce that $T_{\epsilon,\sigma}$ is monotone.
\end{proof}

 \bibliographystyle{elsarticle-num} 
\bibliography{library.bib}




\end{document}